\newcommand{\R}{{\mathbb R}}
\newcommand{\N}{{\mathbb N}}
\newcommand{\SN}{{\mathbb S}^{N-1}}
\newcommand{\weakly}{\rightharpoonup}
\newcommand{\eps }{\varepsilon}
\newcommand{\e }{{\mathbf e}_1}
\renewcommand{\geq }{\geqslant}
\renewcommand{\leq }{\leqslant}
\newcommand{\abs}[1]{\left\vert #1 \right\vert}
\newcommand{\Di}[1]{{\mathcal D}^{1,2}(#1)}
\newcommand{\nor}[1]{\left\| #1 \right\|}
\newenvironment{pf}{\noindent{\bf Proof}.\enspace}{\hfill\qed\medskip}
\newenvironment{pfn}[1]{\noindent{\bf Proof of
    {#1}.\enspace}}{\hfill\qed\medskip}
\newtheorem{Theorem}{Theorem}[section]
\newtheorem{Corollary}[Theorem]{Corollary}
\newtheorem{Lemma}[Theorem]{Lemma}
\newtheorem{Proposition}[Theorem]{Proposition}
\theoremstyle{definition}
\newtheorem{remark}[Theorem]{Remark}
\begin{document}

\title[Attaching a thin handle on the spectral rate of convergence]{On the sharp effect of attaching a thin handle on the spectral rate of convergence}
\author{Laura Abatangelo, Veronica Felli, Susanna Terracini}

\address{
\hbox{\parbox{5.7in}{\medskip\noindent
  L. Abatangelo, V. Felli\\
Dipartimento di Matematica e Applicazioni,\\
 Universit\`a di Milano Bicocca, \\
Piazza Ateneo Nuovo, 1, 20126 Milano (Italy)         . \\[2pt]
         {\em{E-mail addresses: }}{\tt laura.abatangelo@unimib.it, veronica.felli@unimib.it.}\\[5pt]
 S. Terracini\\
 Dipartimento di Matematica ``Giuseppe Peano'',\\
Universit\`a di Torino, \\
Via Carlo Alberto, 10,
10123 Torino (Italy). \\[2pt]
                                     \em{E-mail address: }{\tt susanna.terracini@unito.it.}}}
}

\date{\today}

\thanks{2010 {\it Mathematics Subject Classification.} 35B40,
35J25, 35P15, 35B20.\\
  \indent {\it Keywords.} Weighted
  elliptic eigenvalue problem, dumbbell domains, asymptotics of eigenvalues.\\
\indent Partially supported by the PRIN2009 grant ``Critical Point Theory and
Perturbative Methods for Nonlinear \\
\indent Differential Equations''.}

\begin{abstract}
  Consider two domains connected by a thin tube: it can be shown that
  the resolvent of the Dirichlet Laplacian is continuous with respect
  to the channel section parameter. This in particular implies the
  continuity of isolated simple eigenvalues and the corresponding
  eigenfunctions with respect to domain perturbation.  Under an explicit
 nondegeneracy condition, we improve this information
  providing a sharp control of the rate of convergence of the
  eigenvalues and eigenfunctions in the perturbed domain to the
  relative eigenvalue and eigenfunction in the limit domain. As an application, we prove that, again under an explicit  nondegeneracy condition, the case of resonant domains features polinomial splitting of the two eigenvalues  and a clear bifurcation of eigenfunctions.
\end{abstract}

\maketitle

\section{Introduction and statement of the main results}

The aim of this paper is to investigate the behavior of Dirichlet
eigenvalues in varying domains, when a shrinking cylindrical handle is
attached to a smooth region, seeking not only for the rate of
convergence but also for sharp asymptotics.  Since we consider a
tubular handle with a cross-section of radius of order $\eps\to0^+$
(see Figure \ref{fig:1}), it is quite natural to expect the rate of
convergence of the eigenvalues to rely essentially on the capacity of
the junction points and hence to be of order $\eps^N$, being $N$ the
space dimension.

\begin{figure}[h]\label{fig:1}
\begin{center}
\psset{unit=0.5cm}
\begin{pspicture}(-5.5,-3.5)(5.5,3.5)
\pscurve[fillstyle=solid]
(-0.75,1)(-0.6,3.5)(1.2,2)(1,0)(0.8,-2.4)(-0.6,-3.5)(-1.5,-1)
\psbezier(-1.5,-1)(-1.2,0.5)(-1,-0.5)(-0.75,1)
\psellipse[linewidth=.5pt,linestyle=dashed](-0.25,-2.4)(1.05,0.5)
\psellipse[linewidth=.5pt,linestyle=dashed](0,2.4)(1,0.5)
\pscurve[fillstyle=solid]
(-2.6,1)(-4.6,3.5)(-5.2,2)(-4.8,0)(-4.2,-3.5)(-2.8,-1)
\psbezier(-2.6,1)(-2.4,0.5)(-2.65,-0.5)(-2.8,-1)
\psellipse[linewidth=.5pt,linestyle=dashed](-4.05,-2.4)(0.8,0.4)
\psellipse[linewidth=.5pt,linestyle=dashed](-4.25,2.4)(0.95,0.5)
\uput[90](-4.05,-1.4){$D^-$}
\uput[90](-0.05,-1.4){$D^+$}
\end{pspicture}
\qquad
 \psset{unit=0.5cm}
\begin{pspicture}(-5.5,-3.5)(5.5,3.5)
\pscurve[fillstyle=solid]
(-0.75,1)(-0.6,3.5)(1.2,2)(1,0)(0.8,-2.4)(-0.6,-3.5)(-1.5,-1)
\psbezier(-1.5,-1)(-1.2,0.5)(-1,-0.5)(-0.75,1)
\pscurve[fillstyle=solid]
(0,0.5)(0.2,0.4)(0.25,0.25)(0.15,-0.4)(0,-0.5)
\pscurve[linewidth=.5pt,linestyle=dashed,fillstyle=solid]
(0,0.5)(-0.1,0.4)(-0.15,0.25)(-0.25,-0.4)(0,-0.5)
\psellipse[linewidth=.5pt,linestyle=dashed](-0.25,-2.4)(1.05,0.5)
\psellipse[linewidth=.5pt,linestyle=dashed](0,2.4)(1,0.5)
\pscurve[fillstyle=solid]
(-2.6,1)(-4.6,3.5)(-5.2,2)(-4.8,0)(-4.2,-3.5)(-2.8,-1)
\psbezier(-2.6,1)(-2.4,0.5)(-2.65,-0.5)(-2.8,-1)
\pscurve[fillstyle=solid]
(-4,1)(-4.3,0.9)(-4.25,0.8)(-4.15,0)(-4,0)
\pscurve[linewidth=.5pt,linestyle=dashed,fillstyle=solid]
(-4,1)(-3.8,0.9)(-3.7,0.8)(-3.65,0.1)(-4,0)
\pscurve(-4,1)(-3.5,0.9)(-2,-0.5)(-0.2,0.5)(0,0.5)
\pscurve(-4,0)(-3.6,-0.1)(-1.9,-1.2)(-1.5,-1)(-0.4,-0.5)(0,-0.5)
\psellipse[linewidth=.5pt,linestyle=dashed](-4.05,-2.4)(0.8,0.4)
\psellipse[linewidth=.5pt,linestyle=dashed](-4.25,2.4)(0.95,0.5)
\uput[90](-4.05,-1.4){$D^-$}
\uput[90](-0.05,-2){$D^+$}
\uput[90](-2,-0.5){$\mathcal C_\eps$}
\qdisk(0,0){1pt}
\uput[0](0,0){$\mathbf e_1$}
\psellipse[fillstyle=solid,fillcolor=lightgray,linestyle=dashed,linewidth=.5pt](-2,-0.85)(0.2,0.4)
\uput[180](-2,-2){$\Sigma$}
\pscurve[linewidth=.5pt]{<-}(-2,-0.85)(-2.3,-1)(-2.5,-1.65)
\end{pspicture}
\end{center}
\caption{The disconnected domain $D^-\cup D^+$ becomes connected by the attachment of the handle $\mathcal C_\eps$.}
\end{figure}
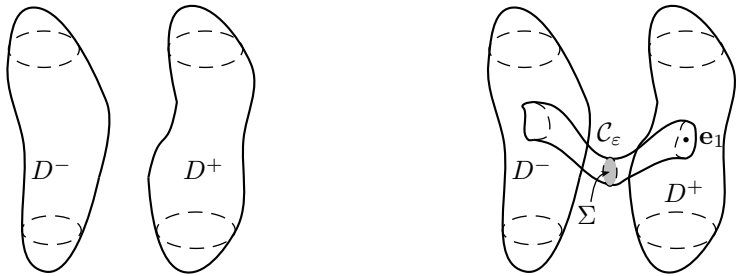

Referring to Figure \ref{fig:1}, let $u_0$ be the $k$-th eigenfunction on the limit (disconnected) domain
$D^-\cup D^+$ completely supported only in the connected component
$D^+$. By the attachment of a handle $\mathcal C_\eps$ with cross
section of radius of order $\eps$, its mass  will be pushed 
into the channel in order to spread over the new entire domain
$D^-\cup \mathcal C_\eps\cup D^+$. 
Besides the tubular shape of the  connecting tube, we require, as a
basic assumption to start our analysis,
that  the handle is attached at a point $\e$
of $\partial D^+$ where $u_0$ has a zero of order one, i.e. its
normal derivative is different from zero. If moreover $u_0$ is 
simple and suitably normalized, the corresponding eigenvalue $\lambda_k$ can be continued into a
family $\lambda_k^\eps$ of eigenvalues corresponding to normalized
eigenfunctions $u_\eps$ on the perturbed domain.

We prove that, in such a setting, there exists the limit
\begin{equation}\label{eq:58}
\lim_{\eps\to0} \eps^{-N}(\lambda_k-\lambda_k^\eps)=
\bigg(\frac{\partial u_0}{\partial \nu}(\e)\bigg)^{\!\!2} \mathfrak
C(\Sigma),
\end{equation}
where $\mathfrak C(\Sigma)$ is a positive constant
depending only the geometry of the junction section $\Sigma$ (see
Theorems \ref{teo_asintotico_autovalori} and
\ref{teo_asintotico_autofunzioni} below).  Thinking to the
eigenfunction $u_0$ as pushed into the channel, we can imagine that a
force acts over the junction between the channel and the domain where
$u_0$ is supported.  The constant $\mathfrak C(\Sigma)$ represents
indeed the \emph{compliance} of the channel's junction, under a
constant force concentrated at the junction section; the compliance,
which can be expressed as the $L^1$-norm of the trace of a suitable
harmonic function over the channel section, measures the faculty of an
elastic membrane to adjust or to resist to a force applied on the
section, see \eqref{def_compliance} for a precise definition.  Our
proof consists in a sharp differentiation with respect to the
parameter, which requires first a careful analysis of the transition functions which have
to be attached to $u_0$ in order to push it over the channel. In this
way, we will prove that, once more,
\begin{equation*}
\lim_{\eps\to 0^+}
\eps^{-N}\|u_\eps-u_0\|^2_{\Di{\R^N}}
=
\bigg(\frac{\partial u_0}{\partial \nu}(\e)\bigg)^{\!\!2}
\mathfrak C(\Sigma),
\end{equation*}
where $u_\eps$ and $u_0$ are trivially extended to the whole $\R^N$.

As an application of the sharp asymptotics \eqref{eq:58} we are able
to treat also the resonant case: if $\lambda_k=\lambda_{k+1}$ is a double
eigenvalue on the limit disconnected domain which is a simple
eigenvalue both on $D^-$ and on $D^+$, an asymptotics for eigenvalues
of type \eqref{eq:58} still holds if the limit problem is
asymmetrical, e.g.  under the assumption that the normal
derivatives of the limit eigenfunctions at the junctions are different
from each other (see Theorem \ref{t:reso}). In this
case, it turns out that the splitting of the two subsequent eigenvalues 
$\lambda_k^\eps,\lambda_{k+1}^{\eps}$ has the polynomial vanishing
order $\eps^N$ (see Remark \ref{r:splitting_polinomiale}); 
 such result complements those in 
 \cite{BHM}, where it was proved that, in a symmetric dumbbell domain
 with a shrinking handle, the
 splitting of the first two eigenvalues vanishes with exponential rate.
Moreover, in contrast with \cite{BHM}, we can localize each
approximating 
eigenfunction on its corresponding region, up to an exponentially
vanishing tail, see Theorem \ref{t:reso_autofun}.

For expository reasons, the present paper discusses the effect of
attaching a thin handle on the spectral rate of convergence only for
dumbbell domains. However, up to minor modifications, the results
obtained here hold true in quite general contexts, since they rely
essentially on the attachment of a shrinking handle at a point in
which the limit eigenfunction has a zero of order $1$; therefore the
presence/lack of a second domain beyond the channel and its shape seem
to be irrelevant for the validity of the asymptotics we are going to
derive. The choice of focusing on the dumbbell structure is motivated
not only by the large attention devoted to this peculiar case of
singularly perturbed domain in the literature, due to the many
interesting related spectral phenomena (see  \S
\ref{sec:motiv-refer-liter} below), but also by the fact that some
preliminary results required in our analysis have been obtained for
dumbbell domains in \cite{AFT12,FT12}, where the singular asymptotic
behavior of eigenfunctions at the second junction of the tube is
described.

\subsection{Dumbbell domains}

As a paradigmatic example, we consider a dumbbell domain where each
``chamber'' has a constant section, namely we straighten out the
handle and assume its section $\Sigma$ to be constant along its whole
length, whereas we spread out the two domains $D^+$ and $D^-$ assuming
they are two entire half-spaces, see Figure \ref{fig:dd}. We observe
that such a simplification of the domain's geometry does not imply a
substantial loss of generality if a suitable weight is introduced in
the eigenvalue problem under investigation: indeed, the effect of a
diffeomorphism transforming a generic dumbbell in a dumbbell with two
half-spaces as chambers is the transformation of the eigenvalue
problem into a weighted one.

Let $N\geq 3$. We denote
\begin{equation*}
D^-=\{(x_1,x')\in \R\times \R^{N-1}:x_1<0\},\quad
D^+=\{(x_1,x')\in \R\times \R^{N-1}:x_1>1\},
\end{equation*}
and, 
 for all $t>0$,
\begin{equation*}
B^+_t:=D^+\cap B({\mathbf e}_1,t),\quad
B^-_t:=D^-\cap B({\mathbf 0},t),\quad
\Gamma_t^+=D^+\cap \partial B^+_{t},\quad 
\Gamma_t^-=D^-\cap \partial B^-_{t},\quad 
\end{equation*}
where ${\mathbf e}_1
=(1,0,\dots,0)\in \R^N$, ${\mathbf 0}
=(0,0,\dots,0)\in \R^N$,
and $B(P,t):=\{x\in\R^N:|x-P|<t\}$ denotes the
ball of radius $t$ centered at $P$.
Let 
$\Sigma\subset \R^{N-1}$ be an open bounded  set with
$C^{2,\alpha}$-boundary containing $0$. For simplicity of notation, we
assume that $\Sigma$ satisfies 
\begin{equation}\label{eq:condsigma}
\big\{x'\in\R^{N-1}:|x'|\leq\tfrac1{2}\big\}\subset \Sigma\subset
\{x'\in\R^{N-1}:|x'|<1\}.
\end{equation}
Let $p\in C^1(\R^N,\R)\cap L^{\infty}(\R^N)$ be a weight  satisfying
\begin{align}
\label{eq:p} & p\geq 0\text{ a.e. in }\R^N,\
p\in L^{N/2}(\R^N),\ \nabla p(x)\cdot x\in L^{N/2}(\R^N),
\ \frac{\partial p}{\partial x_1}\in L^{N/2}(\R^N),\\
\label{eq:p2} & 
\begin{cases}
p\not\equiv 0\text{ in }D^-,\quad
 p\not\equiv 0\text{ in }D^+,\\
 p(x)=0\text { for all } 
x\in \{(x_1,x')\in \R\times \R^{N-1}:\frac12\leq x_1\leq1,\ x'\in\Sigma\}
\cup B^+_3.
\end{cases}
\end{align}
Assumption \eqref{eq:p2} is required for technical reasons as in
\cite{AFT12,FT12}; it is used in \S \ref{sec:point-wise-energy} to
prove some preliminary estimates of eigenfunctions on the perturbed domain.
 
For every open set $\Omega\subset\R^N$,  we denote as $\sigma_p(\Omega)$ the
 set of the diverging eigenvalues 
\[
\lambda_1(\Omega)\leq
\lambda_2(\Omega)\leq\cdots\leq\lambda_k(\Omega)\leq \cdots
\]
 (where each $\lambda_k(\Omega)$ is repeated as many times as its multiplicity) of
 the weighted eigenvalue problem
\begin{equation}\label{eq:30}
\begin{cases}
-\Delta \varphi=\lambda p \varphi,&\text{in }\Omega,\\
\varphi=0,&\text{on }\partial \Omega.
\end{cases}
\end{equation}
It is easy to verify that $\sigma_p(D^-\cup D^+)=\sigma_p(D^-)\cup \sigma_p(D^+)$.

Let $\Omega^\eps\subset\R^N$ be the domain  formed by
connecting the two half-spaces $D^+,D^-$ with a tube of length $1$ and
cross-section $\eps\Sigma$, i.e.
\begin{equation}\label{eq:31}
\Omega^\eps=D^-\cup \mathcal C_\eps\cup D^+,
\end{equation}
where    $\eps\in (0,1)$ and $\mathcal C_\eps=\{(x_1,x')\in \R\times
\R^{N-1}:0\leq x_1\leq1,\ \frac{x'}{\eps}\in\Sigma\}$, see Figure \ref{fig:dd}.
\begin{figure}[h]
 \centering
   \begin{psfrags}
     \psfrag{D-}{$D^-$}
     \psfrag{D+}{$D^+$}
\psfrag{e}{${\scriptsize{\eps}}$}
\psfrag{1}{${\scriptsize{1}}$}
\psfrag{C}{$\mathcal C_\eps$}
     \includegraphics[width=7cm]{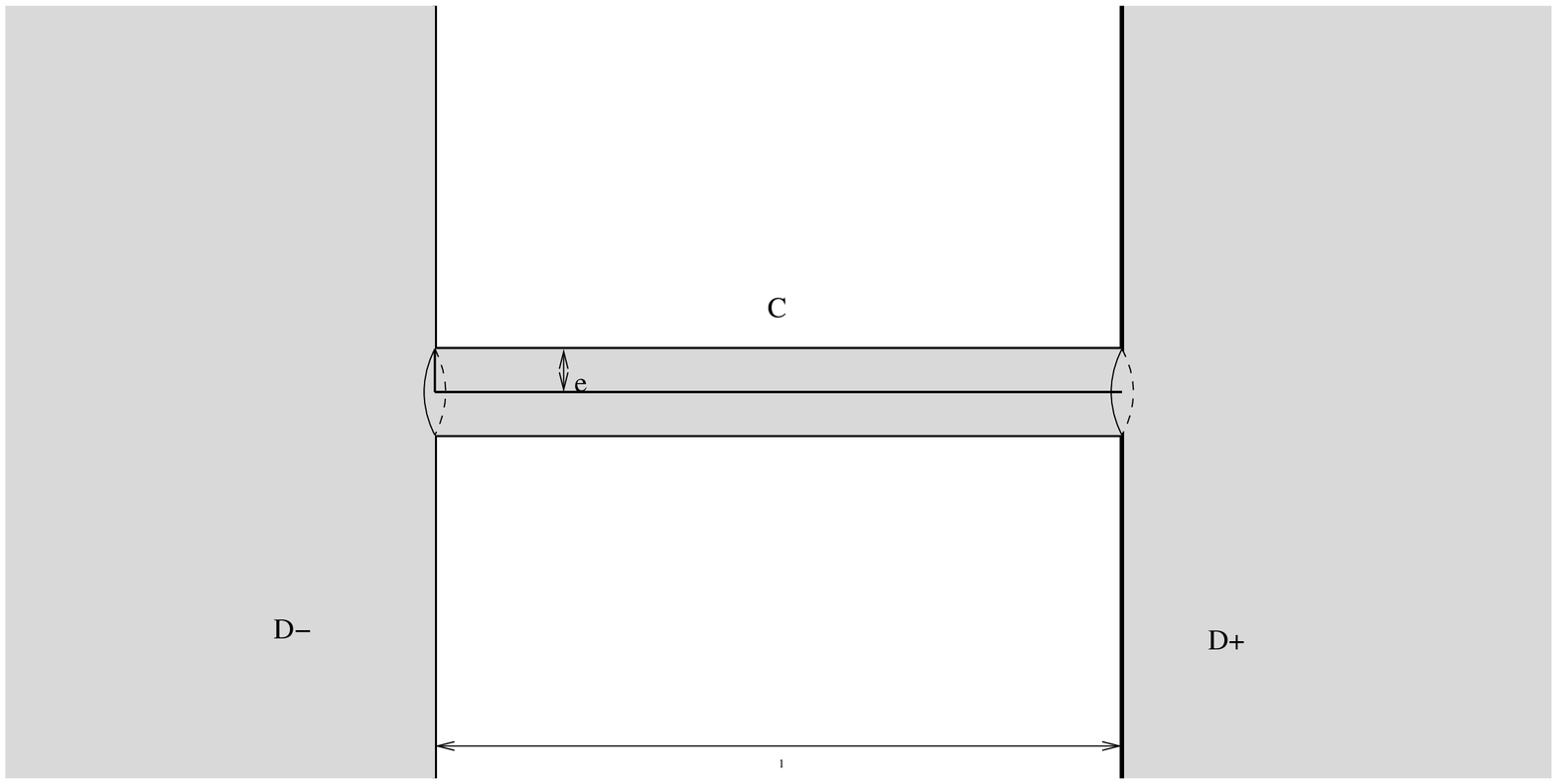}
   \end{psfrags}
 \caption{The domain $\Omega^\eps$.}\label{fig:dd}
\end{figure}

Here and in the sequel, for every open set $\Omega\subseteq\R^N$, ${\mathcal
    D}^{1,2}(\Omega)$ denotes the functional space obtained as completion of
$C^\infty_{\rm c}(\Omega)$
with respect to the Dirichlet norm $\|u\|_{{\mathcal
    D}^{1,2}(\Omega)}=\big(\int_{\Omega}|\nabla u|^2dx\big)^{1/2}$.

\subsection{Main results}

By standard minimization methods, it is easy to prove that the minimum 
\begin{equation}\label{eq:m(sigma)}
m(\Sigma)=\min_{w\in \mathcal
  D^{1,2}(\widetilde D)} J_\Sigma(w),
\end{equation}
is achieved, where 
  \[
\widetilde D =D^+\cup T_1^-, \quad
  T_1^-=\{(x_1,x'):x'\in\Sigma,\ x_1\leq1\},
\]
and 
\begin{align}\label{eq:Jsigma}
&J_\Sigma:{\mathcal
   D}^{1,2}(\widetilde D)\to\R,\\
\notag& J_\Sigma(w)= \frac12\int_{\widetilde
    D}|\nabla w(x)|^2\,dx-\int_{\Sigma}
 w(1,x')\,dx',\quad\text{for every }w\in \mathcal
  D^{1,2}(\widetilde D).
\end{align}
It is easy to verify that 
\[
m(\Sigma)<0,
\]
see Corollary \ref{cor:min_phi1}.
Moreover, we notice that, denoting as $(\mathcal D^{1,2}(\R^N))^\star$
the dual space of $\mathcal D^{1,2}(\R^N)$ and 
letting $\mathcal F\in (\mathcal D^{1,2}(\R^N))^\star$ defined as 
\[
\phantom{a}_{(\mathcal D^{1,2}(\R^N))^\star}\big\langle \mathcal F,
w\big\rangle_{\mathcal D^{1,2}(\R^N)} =
\int_{\Sigma}w(1,x')\,dx',
\]
$m(\Sigma)$ can be rewritten as 
\begin{equation}\label{eq:legame_m_c}
 m(\Sigma)=\min_{w\in \mathcal
  D^{1,2}(\widetilde D)} \bigg(\frac12\int_{\widetilde
    D}|\nabla w(x)|^2\,dx-
\phantom{a}_{(\mathcal D^{1,2}(\R^N))^\star}\big\langle \mathcal F,
w\big\rangle_{\mathcal D^{1,2}(\R^N)}\bigg) 
=-\frac{\mathfrak{C}(\Sigma)}2
\end{equation}
where 
\begin{equation}\label{def_compliance}
  \mathfrak{C}(\Sigma):=
\max_{w\in \mathcal
  D^{1,2}(\widetilde D)} \bigg(2
\phantom{a}_{(\mathcal D^{1,2}(\R^N))^\star}\big\langle \mathcal F,
w\big\rangle_{\mathcal D^{1,2}(\R^N)}-\int_{\widetilde
    D}|\nabla w(x)|^2\,dx\bigg) 
\end{equation}
represents the compliance functional
associated to the force $\mathcal F$ concentrated on the section
$\Sigma$ in the flavor of \cite{BS,BSV}.  In general, the compliance
functional measures the rigidity of a membrane subject to a given
(vertical) force: the maximal rigidity is obtained by minimizing the
compliance functional $\mathfrak C(\Sigma)$ in the class of admissible
regions $\Sigma$.  With this notation and concepts in mind we state
our main results.

Let us first assume that there exists $k_0\geq 1$ such that
\begin{align}
  \label{eq:53} \lambda_{k_0}(D^+)&\text{ is simple and the
    corresponding eigenfunctions}\\
\notag&\text{ have in ${\mathbf e}_1
    $ a zero of order $1$},\\
  \label{eq:54} 
 \lambda_{k_0}(D^+)&\not\in \sigma_p(D^-).
  \end{align}
We can then fix an eigenfunction $u_0\in{\mathcal
    D}^{1,2}(D^+)\setminus\{0\}$ associated to $\lambda_{k_0}(D^+)$,
  i.e. solving
\begin{equation}\label{eq:u0}
\begin{cases}
-\Delta u_0=\lambda_{k_0}(D^+) p u_0,&\text{in }D^+,\\
u_0=0,&\text{on }\partial D^+,
\end{cases}
\end{equation}
such that
\begin{equation}\label{eq:13}
\frac{\partial u_0}{\partial x_1}({\mathbf e}_1
)>0\quad\text{and}\quad \int_{D^+}p(x)u_0^2(x)\,dx=1.
\end{equation}
From \cite[Example 8.2, Corollary 4.7, Remark 4.3]{daners} (see also
\cite[Lemma 1.1]{FT12}), it follows that, letting
\begin{equation*}
\lambda_\eps=\lambda_{\bar k}(\Omega^\eps)
\end{equation*}
where $\bar k=k_0+
\mathop{\rm card}\big\{
j\in\N\setminus\{0\}:\lambda_j(D^-)\leq \lambda_{k_0}(D^+)\}$,
so that $\lambda_{k_0}(D^+)=\lambda_{\bar k}(D^-\cup D^+)$, there
holds
\begin{equation}\label{eq:52}
\lambda_\eps\to \lambda_{k_0}(D^+)\quad\text{as }\eps\to0^+.
\end{equation}
We will denote
\[
\lambda_0=\lambda_{k_0}(D^+).
\]
Furthermore, for every $\eps$ sufficiently small, $\lambda_\eps$
is simple and there exists an eigenfunction $u_\eps$
associated to $\lambda_\eps$, i.e. satisfying
\begin{equation}\label{problema}
\begin{cases}
-\Delta u_\eps=\lambda_\eps p u_\eps,&\text{in }\Omega^\eps,\\
u_\eps=0,&\text{on }\partial \Omega^\eps,
\end{cases}
\end{equation}
such that
\begin{equation}\label{convergenza_u_0}
\int_{\Omega^\eps}p(x)u_\eps^2(x)\,dx=1\quad\text{and}\quad  u_\eps\to
u_0\text{ in }{\mathcal D}^{1,2}(\R^N)
\text{ as }\eps\to0^+,
\end{equation}
where in the above formula we mean the functions $u_\eps,u_0$ to be trivially extended to the whole $\R^N$.
We refer to \cite[\S 5.2]{bucur2006} for uniform convergence of eigenfunctions.

\begin{Theorem}\label{teo_asintotico_autovalori}
  Under assumptions \eqref{eq:condsigma}, \eqref{eq:p}, \eqref{eq:p2},
  \eqref{eq:53}, and \eqref{eq:54}, let
  $\lambda_0=\lambda_{k_0}(D^+)=\lambda_{\bar k}(D^- \cup D^+)$ be the
  $\bar k$-th eigenvalue of problem \eqref{eq:30} on $D^- \cup D^+$
  (which is equal to the simple $k_0$-th eigenvalue on $D^+$) and
  $\lambda_\eps=\lambda_{\bar k}(\Omega^\eps)$ be the $\bar k$-th
  eigenvalue of problem \eqref{problema} on the domain $\Omega^\eps$
  defined in \eqref{eq:31}. Then
\begin{equation}\label{eq:36}
\lim_{\eps\to 0^+}\frac{\lambda_0 - \lambda_\eps}{\eps^N}=
\bigg(\frac{\partial u_0}{\partial x_1}(\e)\bigg)^{\!\!2}
\mathfrak C(\Sigma),
\end{equation}
with $u_0$  as in \eqref{eq:u0} and \eqref{eq:13}, and $\mathfrak C(\Sigma)$ 
as in \eqref{def_compliance}.
\end{Theorem}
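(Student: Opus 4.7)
The plan is to derive an exact identity expressing $\lambda_0-\lambda_\eps$ as a boundary integral over the tube opening $\{1\}\times(\eps\Sigma)$, and then evaluate that integral asymptotically via a rescaling analysis of $u_\eps$ at the junction.

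\smallskip
\textbf{Step 1 (an exact identity).} Let $\widetilde u_0$ denote the extension of $u_0$ by zero to $\R^N$; since $u_0=0$ on $\partial D^+$, we have $\widetilde u_0\in\mathcal D^{1,2}(\Omega^\eps)$. Testing the eigenvalue equation for $u_\eps$ against $\widetilde u_0$ on $\Omega^\eps$ gives
\[\int_{D^+}\nabla u_0\cdot\nabla u_\eps\,dx=\lambda_\eps\int_{D^+}p\,u_0\,u_\eps\,dx,\]
using $\widetilde u_0=0$ outside $D^+$ and $p\equiv 0$ on $\mathcal C_\eps$ by \eqref{eq:p2}. Testing the eigenvalue equation for $u_0$ against $u_\eps|_{D^+}$ and applying Green's identity, with a boundary term surviving only on the tube opening $\{1\}\times(\eps\Sigma)$ where $u_\eps$ does not vanish, yields
\[\int_{D^+}\nabla u_0\cdot\nabla u_\eps\,dx=\lambda_0\int_{D^+}p\,u_0\,u_\eps\,dx+\int_{\{1\}\times(\eps\Sigma)}\frac{\partial u_0}{\partial\nu}(1,x')\,u_\eps(1,x')\,dx'.\]
Subtracting,
\[(\lambda_0-\lambda_\eps)\int_{D^+}p\,u_0\,u_\eps\,dx=-\int_{\{1\}\times(\eps\Sigma)}\frac{\partial u_0}{\partial\nu}(1,x')\,u_\eps(1,x')\,dx'.\]

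\smallskip
\textbf{Step 2 (rescaled profile at the junction).} Set $U_\eps(y):=\eps^{-1}u_\eps(\e+\eps y)$. Because $p\equiv 0$ on $\mathcal C_\eps\cup B_3^+$ by \eqref{eq:p2}, $U_\eps$ is harmonic on the rescaled neighbourhood of the junction and vanishes on the lateral walls of the rescaled tube. Starting from the uniform bound $\|u_\eps\|_{\mathcal D^{1,2}(\R^N)}\leq C$ provided by \eqref{convergenza_u_0}, together with the preliminary estimates in \S\ref{sec:point-wise-energy} and \cite{AFT12,FT12}, I would show that the trace $U_\eps(0,\cdot)$ on $\Sigma$ converges in $L^1(\Sigma)$ to $\alpha\,W(0,\cdot)$, where $\alpha:=\frac{\partial u_0}{\partial x_1}(\e)>0$ and $W(y):=w_\ast(y+\e)$ is the shifted minimizer of $J_\Sigma$ from \eqref{eq:m(sigma)}. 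The limit is identified by the matching conditions $U_\eps\sim\alpha y_1$ as $y_1\to+\infty$ in $\{y_1>0\}$ (forced by $u_\eps\approx u_0\approx\alpha(x_1-1)=\alpha\eps y_1$ near $\e$) and $U_\eps\to 0$ as $y_1\to-\infty$ along the tube (decay toward $D^-$); combined with the weak formulation of $w_\ast$, these pin down the trace of the limit profile on $\Sigma$.

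\smallskip
\textbf{Step 3 (limit evaluation and conclusion).} Elliptic regularity gives $u_0$ smooth near $\e$, and $\partial_\nu u_0(\e)=-\alpha$, so on the tube opening $\partial_\nu u_0=-\alpha+O(\eps)$. Changing variables $x'=\eps y'$ in the boundary integral and applying Step~2,
\[-\int_{\{1\}\times(\eps\Sigma)}\frac{\partial u_0}{\partial\nu}\,u_\eps\,dx'=\alpha\,\eps^{N-1}\!\int_\Sigma u_\eps(1,\eps y')\,dy'+o(\eps^N)=\alpha^2\,\eps^N\!\int_\Sigma W(0,y')\,dy'+o(\eps^N).\]
Testing the weak form for $w_\ast$ against itself gives $\int|\nabla w_\ast|^2=\int_\Sigma w_\ast(1,y')\,dy'$, which combined with $J_\Sigma(w_\ast)=-\mathfrak C(\Sigma)/2$ from \eqref{eq:legame_m_c} yields $\int_\Sigma W(0,y')\,dy'=\int_\Sigma w_\ast(1,y')\,dy'=\mathfrak C(\Sigma)$. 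Since $\int_{D^+}p\,u_0\,u_\eps\to\int_{D^+}p\,u_0^2=1$ by \eqref{convergenza_u_0}, dividing the identity from Step~1 by $\eps^N$ produces \eqref{eq:36}. The hard part is Step~2: proving the $L^1(\Sigma)$ convergence with enough strength to take the trace limit above. This demands uniform control of $u_\eps$ across the transition zones (tube interior to $D^-$ and $D^+$ to tube) and a rigorous identification of the limit profile via the matching conditions at both ends of the tube, with the nondegeneracy assumption \eqref{eq:53} essential to guarantee $\alpha\neq 0$ and hence a nontrivial limit.
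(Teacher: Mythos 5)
Your argument is correct in outline but follows a genuinely different route from the paper. The paper proves \eqref{eq:36} via the Courant--Fischer minimax principle: it builds $\bar k$-dimensional test spaces out of $\widetilde u_{\eps,R}$ and $\widehat u_{\eps,R}$ (harmonic modifications of $u_0$ and $u_\eps$ near the junction), derives two-sided bounds $K_1(\eps,R)\le \eps^{-N}(\lambda_0-\lambda_\eps)\le K_2(\eps,R)$, and identifies the common limit as $R\to+\infty$ through the auxiliary profiles $z_R$, $v_R$ and Lemmas \ref{lemma_Phi_1}--\ref{lemma_v_R}. You instead extract the exact flux identity $(\lambda_0-\lambda_\eps)\int_{D^+}p\,u_0u_\eps\,dx=\int_{\eps\Sigma}\partial_{x_1}u_0(1,x')\,u_\eps(1,x')\,dx'$, which is legitimate (the extension of $u_0$ by zero lies in $\Di{\Omega^\eps}$, and $u_0$ is smooth up to $\partial D^+$ near $\e$ since $p\equiv 0$ on $B_3^+$) and gives a considerably shorter reduction: the left factor $\int_{D^+}pu_0u_\eps\,dx\to1$ by \eqref{convergenza_u_0}, and after rescaling the right-hand side equals $\eps^N\big(\alpha\int_\Sigma U_\eps(1,y')\,dy'+O(\eps)\big)$ with $\alpha=\tfrac{\partial u_0}{\partial x_1}(\e)$ and $U_\eps$ as in \eqref{eq:19}. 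Your Step 2 is then precisely convergence \eqref{Phi} of Lemma \ref{blow_up_vari}, $U_\eps\to\alpha\Phi$ in $\mathcal H_R$, which the paper imports from \cite{FT12,AFT12}; since $\mathcal H_R$-convergence controls the trace on $\{1\}\times\Sigma$, this yields $\int_\Sigma U_\eps(1,\cdot)\,dy'\to\alpha\int_\Sigma\Phi(1,\cdot)\,dy'=\alpha\,\mathfrak C(\Sigma)$ by Corollary \ref{cor:min_phi1} and \eqref{eq:legame_m_c} (note your $W(0,\cdot)$ coincides with $\Phi(1,\cdot)$ because the minimizer of $J_\Sigma$ is $\Phi-(x_1-1)^+$, Lemma \ref{l:secondo_lemma_Phi_1}(ii)). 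The entire analytic burden of your proof therefore sits in Step 2, which you only sketch via matching conditions: you should invoke the blow-up lemma as a black box (its proof in the cited references does not presuppose the eigenvalue rate, so there is no circularity) rather than attempt to re-derive it heuristically. What the paper's longer minimax route buys is the explicit two-sided test-function machinery ($z_R$, $v_R$, $\widehat u_{\eps,R}$, $\widetilde u_{\eps,R}$) that is reused verbatim in the proof of Theorem \ref{teo_asintotico_autofunzioni} and in the resonant case, together with an explicit handling of the $\bar k-1$ lower eigenvalues; in your approach that spectral bookkeeping is entirely absorbed into the hypothesis $u_\eps\to u_0$ of \eqref{convergenza_u_0}.
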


\begin{Theorem}\label{teo_asintotico_autofunzioni}
Under assumptions
\eqref{eq:condsigma}, \eqref{eq:p}, \eqref{eq:p2}, \eqref{eq:53}, and
\eqref{eq:54}, let $u_\eps$ and $u_0$ as in \eqref{eq:u0},
\eqref{eq:13}, \eqref{eq:52},
\eqref{problema}, \eqref{convergenza_u_0}. 
Then 
\begin{equation}\label{eq:as_eigenfunctions}
\lim_{\eps\to 0^+}
\eps^{-N}\|u_\eps-u_0\|^2_{\Di{\R^N}}
=
\bigg(\frac{\partial u_0}{\partial x_1}(\e)\bigg)^{\!\!2}
\mathfrak C(\Sigma),
\end{equation}
where $u_\eps$ and $u_0$ are trivially extended to the whole $\R^N$
and $\mathfrak C(\Sigma)$ is defined in \eqref{def_compliance}.
\end{Theorem}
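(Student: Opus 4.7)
The strategy is to reduce the eigenfunction asymptotics~\eqref{eq:as_eigenfunctions} to the eigenvalue asymptotics~\eqref{eq:36} via a Pythagorean-type identity, and then to show that the resulting weighted $L^{2}$ remainder is negligible compared to $\eps^{N}$.

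I would first prove the identity
\begin{equation*}
\|u_\eps-u_0\|_{\Di{\R^N}}^{2}=(\lambda_0-\lambda_\eps)+\lambda_\eps\int_{\R^N}p(x)(u_\eps(x)-u_0(x))^{2}\,dx.
\end{equation*}
Since the trivial extension of $u_0$ to $\R^N$ vanishes on $\partial\Omega^\eps$, it lies in $\Di{\Omega^\eps}$ and is therefore admissible as a test function in~\eqref{problema}, producing $\int_{\R^N}\nabla u_\eps\cdot\nabla u_0\,dx=\lambda_\eps\int_{\R^N}p\,u_\eps u_0\,dx$. Combining this with the Rayleigh identities $\int|\nabla u_\eps|^{2}\,dx=\lambda_\eps$ and $\int|\nabla u_0|^{2}\,dx=\lambda_0$ (obtained by testing \eqref{problema} and \eqref{eq:u0} against the corresponding eigenfunction and using the normalizations in \eqref{eq:13} and \eqref{convergenza_u_0}), and with $\int p(u_\eps-u_0)^{2}\,dx=2-2\int p\,u_\eps u_0\,dx$, a short algebraic rearrangement yields the identity. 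In view of \eqref{eq:36}, it remains to prove that $\int_{\R^N}p(u_\eps-u_0)^{2}\,dx=o(\eps^{N})$.

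By \eqref{eq:p2} the weight $p$ vanishes on $B^+_3$ and on the channel $\mathcal C_\eps$, and $u_0\equiv 0$ outside $D^+$, so this integral splits as $\int_{D^-}p\,u_\eps^{2}\,dx+\int_{D^+\setminus B^+_3}p(u_\eps-u_0)^{2}\,dx$. For the $D^-$ piece, I would use that $u_\eps$ is harmonic inside the thin tube $\mathcal C_\eps$ with vanishing lateral boundary values: expanding in the transverse Dirichlet modes of the rescaled cross-section $\Sigma$ produces longitudinal profiles decaying exponentially at rate of order $1/\eps$, so the trace of $u_\eps$ on $\{0\}\times\eps\Sigma$ is exponentially small in $1/\eps$; an energy estimate for the harmonic extension into $D^-$ combined with $p\in L^{N/2}(D^-)$ and Sobolev's inequality then yields an exponentially small bound, certainly $o(\eps^{N})$. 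For the $D^+\setminus B^+_3$ piece, I would appeal to the refined description of $u_\eps$ near $\e$ from the preparatory analysis behind Theorem~\ref{teo_asintotico_autovalori} (in the spirit of \cite{AFT12,FT12}): the difference $u_\eps-u_0$ on $D^+$ is essentially a harmonic correction with boundary data of size $O(\eps)$ concentrated on a disc of radius $\eps$ around $\e$, so Poisson-kernel estimates give $|u_\eps-u_0|(x)=O(\eps^{N})$ uniformly for $x$ at distance $\ge 3$ from $\e$, which combined with $p\in L^{N/2}$ and Sobolev's inequality yields an $O(\eps^{2N})$ contribution.

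The principal obstacle is precisely this sharp $o(\eps^{N})$ control: a bare Sobolev bound yields only $\int p(u_\eps-u_0)^{2}\,dx\le C\|u_\eps-u_0\|_{\Di{\R^N}}^{2}=O(\eps^{N})$, which matches but does not beat the order of $\lambda_0-\lambda_\eps$. The improvement rests on the fine structural information on $u_\eps$ near the two mouths of the handle furnished by the transition-function analysis underlying Theorem~\ref{teo_asintotico_autovalori}; this is where the real work sits. Once it is in hand, dividing the identity by $\eps^{N}$ and invoking~\eqref{eq:36} completes the proof.
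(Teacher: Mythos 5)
Your reduction identity
\begin{equation*}
\|u_\eps-u_0\|_{\Di{\R^N}}^{2}=(\lambda_0-\lambda_\eps)+\lambda_\eps\int_{\R^N}p\,(u_\eps-u_0)^{2}\,dx
\end{equation*}
is correct (the trivial extension of $u_0$ is indeed admissible in \eqref{problema}), and it is a genuinely different, more economical reduction than the one in the paper, which instead splits the Dirichlet energy into a junction region handled by blow-up (Corollary \ref{stima_autofunzioni_via_blow_up}) and a bulk region. Your treatment of the $D^-$ and channel contributions is also sound in spirit: the needed exponential smallness is exactly what Lemma \ref{lemma_zero}, Corollary \ref{motivazione_O_grande} and Corollary \ref{stime_integrali_vjeps} provide (note only that $p$ vanishes on the \emph{right half} of the channel and on $B_3^+$, not on all of $\mathcal C_\eps$ or near $\mathbf 0$, so $u_\eps$ is not harmonic on $D^-$ and a supersolution/energy argument is needed rather than a pure harmonic-extension one).

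The genuine gap is in the $D^+$ contribution. On $D^+$ the difference $w_\eps=u_\eps-u_0$ is \emph{not} harmonic: it solves $-\Delta w_\eps-\lambda_\eps p\,w_\eps=(\lambda_\eps-\lambda_0)p\,u_0$ with boundary data of size $O(\eps)$ on the channel mouth, and the operator $-\Delta-\lambda_\eps p$ on $\Di{D^+}$ is nearly resonant: its inverse has norm of order $(\lambda_0-\lambda_\eps)^{-1}\sim\eps^{-N}$. Consequently a Poisson-kernel bound on the harmonic extension of the boundary data says nothing about the component of $w_\eps$ along the kernel direction $u_0$; a priori $u_\eps$ could equal $(1+c\,\eps^{N/2})u_0$ plus small terms, which would make $\int_{D^+}p\,w_\eps^{2}$ of order exactly $\eps^N$ and destroy your identity's conclusion. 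Ruling this out requires coupling the equation with the normalization $\int_{\Omega^\eps}p\,u_\eps^{2}=1$; this is precisely what the paper's operator $F$ in \eqref{def_operatore_F} (whose first component $\|u\|^2-\lambda_0$ kills the kernel direction), Lemma \ref{F_frechet}, and the quantitative inversion estimate of Theorem \ref{stima_teo_inversione} accomplish, and it is new machinery beyond the blow-up analysis behind Theorem \ref{teo_asintotico_autovalori}, which only describes $u_\eps$ at scale $\eps$ near $\e$ and does not yield the $o(\eps^{N/2})$ control of $u_\eps-u_0$ in the bulk of $D^+$. You correctly flag this as ``where the real work sits,'' but the mechanism you propose for it would not close the argument.
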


We observe that, once the measure of the section $\Sigma$ is fixed,
the shape minimizing $m(\Sigma)$ and hence maximizing both the limits
$\lim_{\eps\to 0^+}\eps^{-N}(\lambda_0 - \lambda_\eps)$ and
$\lim_{\eps\to 0^+}\eps^{-N}\|u_\eps-u_0\|^2_{\Di{\R^N}}$ is the
spherical one, as we will show in Proposition \ref{p:steiner} by
Steiner rearrangement. Hence, the disk-shaped section of the tube is
the one which makes as slow as possible the convergence of the
eigenvalues on the perturbed domain to the eigenvalues on the limit
domain, as the handle thickness shrinks to zero. In other words, this
means that among all the admissible sections $\Sigma$, the disk
attains the minimum of the rigidity of the domain $\widetilde D$: from
the opposite point of view, in the case of a round section, the
eigenfunctions located in the right domain $D^+$ are the most
sensitive to the attachment of the thin handle if compared to the case
of more indented sections.  This phenomenon can be read in Theorems
\ref{teo_asintotico_autovalori} and \ref{teo_asintotico_autofunzioni},
since the limits in \eqref{eq:36} and \eqref{eq:as_eigenfunctions} attain their maximal (positive)
constant at a disk-shaped section: symmetrization of the section makes
the difference $\lambda_0 - \lambda_\eps$ and
$\|u_\eps-u_0\|^2_{\Di{\R^N}}$ drift away from being $o(\eps^N)$.

The proof of Theorem \ref{teo_asintotico_autovalori}, which is presented in
Section \ref{sec:proof-theor-reft}, 
is based on the Courant-Fisher \emph{minimax characterization} of
eigenvalues: the estimates from above and below of the Rayleigh
quotient used to prove the theorem are based on the analysis of 
proper test functions introduced in Section
\ref{sec:prel-notat-techn}. 
Theorem \ref{teo_asintotico_autofunzioni} is proved in Section
\ref{sec:rate-conv-eigenf}, using some blow-up analysis
developed in Section \ref{sec:prel-notat-techn} and the invertibility
of an operator associated to the eigenvalue problem on $D^+$ (see \eqref{def_operatore_F}).

\medskip In section \ref{sec:resonant-case} we drop  assumption \eqref{eq:54}
and assume that  $\lambda_{\bar k}(D^-\cup D^+)\in
\sigma_p(D^-)\cap\sigma_p(D^+)$  is 
a simple eigenvalue on $D^-$, a simple eigenvalue on $D^+$, and a
double eigenvalue on $D^-\cup D^+$. 
We prove that by attaching the shrinking handle 
at two points where the normal derivatives of the limit eigenfunctions
are different from each other, the double eigenvalue $\lambda_{\bar k}(D^-\cup D^+)$ on the limit domain
is approximated by two different branches of eigenvalues on the
perturbed domain as 
$\eps\to 0^+$.

\begin{Theorem}\label{t:reso}
Let us assume that \eqref{eq:condsigma}, \eqref{eq:p}, \eqref{eq:p2}
hold and $p(x)=0$ for all $x\in 
B^-_3 \cup \mathcal C_\eps \cup B^+_3$. Let 
\[
\lambda_{\bar k}(D^-\cup D^+)=\lambda_{\bar k+1}(D^-\cup D^+)\in
\sigma_p(D^-)\cap\sigma_p(D^+)
\]
 be 
a simple eigenvalue on $D^-$  with
    corresponding eigenfunctions having in ${\mathbf 0}$ a zero of
    order $1$, a simple eigenvalue on $D^+$  with
    corresponding eigenfunctions having in ${\mathbf e}_1$ a zero of
    order $1$, and a
double eigenvalue on $D^-\cup D^+$. 
Let $u_0^+\in{\mathcal
    D}^{1,2}(D^+)$ and $u_0^-\in{\mathcal
    D}^{1,2}(D^-)$ be the  eigenfunctions 
associated to 
$\lambda_{\bar k}(D^-\cup D^+)=\lambda_{\bar k+1}(D^-\cup D^+)$ on $D^+$ and $D^-$
respectively satisfying 
\begin{equation*}
 \int_{D^+}p(x)|u_0^+(x)|^2\,dx=\int_{D^-}p(x)|u_0^-(x)|^2\,dx=1.
\end{equation*}
If 
\begin{equation}\label{eq:61}
 \abs{\dfrac{\partial u_0^+}{\partial x_1}(\mathbf e_1)} >
 \abs{\dfrac{\partial u_0^-}{\partial x_1}(\mathbf 0)}, 
\end{equation}
then 
\begin{equation*}
\lim_{\eps\to 0^+}\frac{\lambda_{\bar k}(D^-\cup D^+)
-\lambda_{\bar
    k}(\Omega^\eps)}{\eps^N}=
\mathfrak C(\Sigma) \bigg(\dfrac{\partial u_0^+}{\partial
  x_1}(\mathbf e_1)\bigg)^{\!\!2},
\end{equation*}
and 
\begin{equation*}
\lim_{\eps\to 0^+}\frac{\lambda_{\bar k+1}(D^-\cup D^+)-\lambda_{\bar
    k+1}(\Omega^\eps)}{\eps^N}=
\mathfrak C(\Sigma) \bigg(\dfrac{\partial u_0^-}{\partial
  x_1}(\mathbf e_1)\bigg)^{\!\!2},
\end{equation*}
where $\Omega^\eps$ is defined in \eqref{eq:31} and $\mathfrak C(\Sigma)$ is defined in \eqref{def_compliance}.
\end{Theorem}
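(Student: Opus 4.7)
The plan is to adapt the min--max argument from the proof of Theorem~\ref{teo_asintotico_autovalori} by constructing two independent quasi--modes, one for each of the limit eigenfunctions $u_0^+$ and $u_0^-$, and then disentangling the resulting $2\times 2$ block using the asymmetry condition~\eqref{eq:61}. For $\eps$ small I build test functions $v_\eps^+, v_\eps^- \in \Di{\Omega^\eps}$ by the same ``push-into-the-channel'' recipe developed in Section~\ref{sec:prel-notat-techn}, applied to $u_0^+$ at the junction $\e$ and to $u_0^-$ at the junction $\mathbf{0}$, respectively. Writing $\lambda_0 := \lambda_{\bar k}(D^-\cup D^+)$, the computation that underlies Theorem~\ref{teo_asintotico_autovalori} yields, mutatis mutandis,
\begin{align*}
\int_{\Omega^\eps}|\nabla v_\eps^{\pm}|^2\,dx &= \lambda_0 \;-\; \eps^N\,\mathfrak C(\Sigma)\bigg(\frac{\partial u_0^{\pm}}{\partial x_1}\bigg)^{\!\!2} \;+\; o(\eps^N),\\
\int_{\Omega^\eps} p\,(v_\eps^{\pm})^2\,dx &= 1 + o(1),
\end{align*}
where the derivatives are evaluated at the respective junctions.

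Since $p\equiv 0$ on $B^-_3\cup \mathcal C_\eps\cup B^+_3$ and $u_0^{\pm}$ are supported in the disjoint sets $D^{\pm}$, the cross terms vanish exactly, $\int_{\Omega^\eps}p\,v_\eps^+v_\eps^-\,dx = 0$, and the gradient cross term satisfies $\int_{\Omega^\eps}\nabla v_\eps^+\cdot\nabla v_\eps^-\,dx = o(\eps^N)$ because the overlap of $v_\eps^{\pm}$ lives only in the thin handle, where each is of order $\eps^{N/2}$ in $\Di{\mathcal C_\eps}$. Let $\phi_1^\eps,\dots,\phi_{\bar k-1}^\eps\in \Di{\Omega^\eps}$ be suitable cut-off extensions of the first $\bar k-1$ eigenfunctions of $D^-\cup D^+$, vanishing on the handle and near the two junctions. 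Set $W^\eps:=\mathrm{span}(\phi_1^\eps,\dots,\phi_{\bar k-1}^\eps,v_\eps^+,v_\eps^-)$, which has dimension $\bar k+1$ for $\eps$ small. The Courant--Fisher principle gives
\[
\lambda_{\bar k+1}(\Omega^\eps)\leq \max_{v\in W^\eps\setminus\{0\}}\frac{\int_{\Omega^\eps}|\nabla v|^2\,dx}{\int_{\Omega^\eps}p\,v^2\,dx};
\]
the near-orthogonality above reduces this maximum, up to $o(\eps^N)$, to the larger generalized eigenvalue of the block spanned by $v_\eps^{\pm}$, which by~\eqref{eq:61} equals $\lambda_0-\eps^N\mathfrak C(\Sigma)(\partial_{x_1}u_0^-)^2 + o(\eps^N)$. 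Dropping $v_\eps^-$ from $W^\eps$ and repeating the argument yields the analogous upper bound for $\lambda_{\bar k}(\Omega^\eps)$ with $(\partial_{x_1}u_0^+)^2$.

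For the matching lower bounds, let $u_\eps^{(\bar k)}, u_\eps^{(\bar k+1)}$ be $L^2_p$-orthonormal eigenfunctions on $\Omega^\eps$. By spectral continuity (see \cite{daners} and \cite[\S 5.2]{bucur2006}), trivially extending to $\R^N$, along a subsequence they converge weakly in $\Di{\R^N}$ to a $p$-orthonormal pair $w_{\bar k},w_{\bar k+1}\in\mathrm{span}(u_0^+,u_0^-)$; write $w_{\bar k+1} = \alpha u_0^+ + \beta u_0^-$ with $\alpha^2+\beta^2=1$. Running the same $\eps^N$-scale blow-up and compliance identification of Section~\ref{sec:prel-notat-techn}, now for an eigenfunction whose traces at the two junctions behave like $\alpha\,\partial_{x_1}u_0^+(\e)$ and $\beta\,\partial_{x_1}u_0^-(\mathbf 0)$, one obtains
\[
\liminf_{\eps\to 0^+}\frac{\lambda_0-\lambda_{\bar k+1}(\Omega^\eps)}{\eps^N}\;\geq\;\mathfrak C(\Sigma)\bigg[\alpha^2\bigg(\frac{\partial u_0^+}{\partial x_1}(\e)\bigg)^{\!\!2} + \beta^2\bigg(\frac{\partial u_0^-}{\partial x_1}(\mathbf 0)\bigg)^{\!\!2}\bigg].
\]
Combined with the upper bound $\limsup\leq \mathfrak C(\Sigma)(\partial_{x_1}u_0^-(\mathbf 0))^2$ and the strict inequality~\eqref{eq:61}, this forces $\alpha=0$, $|\beta|=1$, proving the second claimed limit. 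An identical scheme applied to $\lambda_{\bar k}(\Omega^\eps)$, together with the $p$-orthogonality $w_{\bar k}\perp w_{\bar k+1}$, forces $w_{\bar k}=\pm u_0^+$ and yields the first claimed limit.

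The main obstacle is the lower-bound identification at the sharp $\eps^N$ scale in the presence of the two-dimensional limit eigenspace. In the non-resonant case of Theorem~\ref{teo_asintotico_autovalori}, uniqueness of the limit eigenfunction lets one read off the leading $\eps^N$ coefficient directly from the compliance functional at a single junction. In the resonant case one must track the contributions at both junctions of $\Omega^\eps$ simultaneously and show that their cross-interaction is genuinely $o(\eps^N)$, so that the two branches decouple at leading order. The asymmetry hypothesis~\eqref{eq:61} is precisely the ingredient that prevents mixing of the two branches and guarantees that the splitting $\lambda_{\bar k+1}(\Omega^\eps)-\lambda_{\bar k}(\Omega^\eps)$ is polynomial of order $\eps^N$, in contrast with the exponential splitting obtained for symmetric dumbbells in~\cite{BHM}.
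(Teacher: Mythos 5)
Your first half (the Courant--Fischer upper bounds on $\lambda_{\bar k}(\Omega^\eps)$ and $\lambda_{\bar k+1}(\Omega^\eps)$ via the two quasi-modes $v_\eps^\pm$ and the $2\times2$ block) is plausible and, granted the asymmetry \eqref{eq:61} to separate the diagonal entries, it does deliver $\liminf_{\eps\to0^+}\eps^{-N}(\lambda_0-\lambda_{\bar k}(\Omega^\eps))\geq \mathfrak C(\Sigma)\big(\tfrac{\partial u_0^+}{\partial x_1}(\e)\big)^2$ and $\liminf_{\eps\to0^+}\eps^{-N}(\lambda_0-\lambda_{\bar k+1}(\Omega^\eps))\geq \mathfrak C(\Sigma)\big(\tfrac{\partial u_0^-}{\partial x_1}(\mathbf 0)\big)^2$. (A small caveat: your Cauchy--Schwarz estimate $O(\eps^{N/2})\cdot O(\eps^{N/2})$ for the gradient cross term only gives $O(\eps^N)$, not $o(\eps^N)$; to kill it you must use the exponential decay of each quasi-mode away from its own junction, as in Lemma \ref{l:stimaeste}.)

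The genuine gap is in the opposite direction. Your third paragraph invokes ``the upper bound $\limsup_{\eps\to0^+}\eps^{-N}(\lambda_0-\lambda_{\bar k+1}(\Omega^\eps))\leq \mathfrak C(\Sigma)\big(\tfrac{\partial u_0^-}{\partial x_1}(\mathbf 0)\big)^2$'', but this is exactly the hard half of the theorem (a \emph{lower} bound on the eigenvalue) and it is never established anywhere in your argument: the min--max step gives the reverse inequality, and the blow-up claim $\liminf\geq \mathfrak C(\Sigma)[\alpha^2(\partial_{x_1}u_0^+(\e))^2+\beta^2(\partial_{x_1}u_0^-(\mathbf 0))^2]$ is again a bound in the same direction as min--max, so it cannot supply the missing $\limsup$. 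Without that $\limsup$ you cannot force $\alpha=0$, and the two branches remain undetermined; the scheme is circular (you need the sharp upper bound on $\lambda_0-\lambda_{\bar k+1}$ to identify $(\alpha,\beta)$, but your route to that bound presupposes knowing $(\alpha,\beta)$). Moreover, running the ``same blow-up and compliance identification'' for an eigenfunction whose limit has mass at \emph{both} junctions is not available off the shelf: the estimates of Section \ref{sec:prel-notat-techn} (Lemma \ref{lemma_zero}, Corollary \ref{motivazione_O_grande}, the construction of $\widehat u_{\eps,R}$ and the bound $\sup_{\Gamma_{R\eps}^+}|u_\eps|=O(\eps)$) are all built under the assumption that the limit eigenfunction vanishes identically on $D^-$. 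The paper avoids this two-junction analysis altogether: it introduces the disconnected domain $\widetilde\Omega^\eps=D_\eps^+\cup D_\eps^-$ obtained by attaching a half-handle to each chamber, shows $\lambda_\ell(\widetilde\Omega^\eps)-\lambda_\ell(\Omega^\eps)=O(\eps^{\frac{N-1}{2}}e^{-\sqrt{\lambda_1(\Sigma)}/(32\eps)})$ (Lemma \ref{l:tildeomegaeps}), applies the non-resonant Theorem \ref{teo_asintotico_autovalori} separately on $D_\eps^+$ and $D_\eps^-$, and uses \eqref{eq:61} only to order the two resulting branches (Proposition \ref{prop_risonanza}). That decoupling is what produces the two-sided $\eps^N$-asymptotics; your proposal is missing an equivalent device.
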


In section \ref{sec:resonant-case}, we also prove that, in the
resonant case, under condition \eqref{eq:61}, each
approximating eigenfunction is localized as $\eps\to 0^+$ on the
corresponding component of the limit domain, i.e. an asymmetrical
limit configuration prevents dumbbell eigenfunctions  from spreading
their mass over both components and forces them to concentrate in one
of the two regions.

\begin{Theorem}\label{t:reso_autofun}
Under the same assumptions and with the same notations of Theorem
\ref{t:reso},
there exist two continuously parametrized  families 
$v_{\bar k}^\eps, v_{\bar k+1}^\eps  \in{\mathcal
    D}^{1,2}(\Omega^\eps)$ of eigenfunctions  on $\Omega^\eps$, i.e. soutions to
    \begin{equation}\label{problemabark}
\begin{cases}
-\Delta v^\eps=\lambda p v^\eps,&\text{in }\Omega^\eps,\\
v^\eps =0,&\text{on }\partial \Omega^\eps,\\
\int_{\Omega^\eps}p(x) |v^\eps(x)|^2\,dx=1,
\end{cases}
\end{equation}

for $\lambda=\lambda_{\bar k}(\Omega^\eps)$ and $\lambda=\lambda_{\bar k+1}(\Omega^\eps)$ respectively, such that 
\begin{equation}
v_{\bar k}^\eps\to
u_0^+ \quad\text{and}\quad  v_{\bar k+1}^\eps\to
u_0^-\text{ in }{\mathcal D}^{1,2}(\R^N)
\text{ as }\eps\to0^+.
\end{equation}
Moreover, for $v^\eps=v_{\bar k}^\eps$ and $v^\eps= v_{\bar k+1}^\eps$ there holds
\begin{equation}\label{eq:62}
\int_{D^*\cup([0,1/8)\times(\eps\Sigma))}|\nabla v^\eps|^2dx=O\big
  (\eps^{-(N+1)}e^{-\frac{\sqrt{\lambda_1(\Sigma)}}{16\eps}}\big),\quad \text{as }\eps\to 0^+.
\end{equation}
where $D^*=D^-$ and $D^*=D^+$ respectively, and $\lambda_1(\Sigma)$ denotes the first eigenvalue of the Laplace
 operator on $\Sigma$ under null Dirichlet boundary conditions.
\end{Theorem}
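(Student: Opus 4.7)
The strategy has two parts: (i) identifying the limits of the two eigenfunction branches using the sharp splitting rates of Theorem \ref{t:reso}, and (ii) establishing the exponential decay \eqref{eq:62} via harmonicity of $v^\eps$ on the handle.

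For (i), for $\eps$ small Theorem \ref{t:reso} gives $\lambda_{\bar k}(\Omega^\eps)\neq\lambda_{\bar k+1}(\Omega^\eps)$, so one can pick $p$-unit eigenfunctions $v_{\bar k}^\eps, v_{\bar k+1}^\eps$ associated with these two simple eigenvalues. By resolvent continuity on dumbbell domains (cf.\ \cite{daners,bucur2006,FT12}), along any $\eps_n\to 0^+$ some subsequence has $v_{\bar k}^{\eps_n}, v_{\bar k+1}^{\eps_n}$ converging in $\mathcal D^{1,2}(\R^N)$ to an orthonormal basis of the $2$-dimensional limit eigenspace $E_0=\mathrm{span}(u_0^+,u_0^-)$. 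To determine \emph{which} basis, I construct test functions $\tilde u_0^{\eps,\pm}\in\mathcal D^{1,2}(\Omega^\eps)$ by gluing $u_0^\pm$ to the capacitary transition profiles in the handle, as in the proof of Theorem \ref{teo_asintotico_autovalori}. Writing $v_{\bar k}^\eps=a_\eps u_0^++b_\eps u_0^-+r_\eps$ with $r_\eps\to 0$ in $\mathcal D^{1,2}(\R^N)$ and $a_\eps^2+b_\eps^2\to 1$, and testing the weak form of \eqref{problemabark} against $\tilde u_0^{\eps,-}$, the Rayleigh expansion of $\tilde u_0^{\eps,-}$ from the proof of Theorem \ref{teo_asintotico_autovalori} yields, after cancellations,
\[
(\lambda_0-\lambda_{\bar k}(\Omega^\eps))\,b_\eps=o(\eps^N).
\]
Theorem \ref{t:reso} and \eqref{eq:61} give $\lambda_0-\lambda_{\bar k}(\Omega^\eps)\sim\eps^N\mathfrak C(\Sigma)\bigl(\partial u_0^+/\partial x_1(\e)\bigr)^{\!2}$, with leading constant strictly larger than that of $\lambda_0-\lambda_{\bar k+1}(\Omega^\eps)$; hence $b_\eps\to 0$ and $v_{\bar k}^\eps\to u_0^+$ (after fixing a sign by continuity). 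The symmetric argument, with the roles of $\pm$ and of $\bar k, \bar k+1$ swapped, yields $v_{\bar k+1}^\eps\to u_0^-$. Continuous parametrization in $\eps$ then follows from simplicity of each branch and continuous dependence of the corresponding spectral projections.

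For (ii), since $p\equiv 0$ on $B^-_3\cup\mathcal C_\eps\cup B^+_3$, both $v_{\bar k}^\eps$ and $v_{\bar k+1}^\eps$ are harmonic there. Expanding $v^\eps$ on $\mathcal C_\eps$ in the Dirichlet basis $\{\phi_j^\eps\}$ of $-\Delta$ on $\eps\Sigma$,
\[
v^\eps(x_1,x')=\sum_{j\geq 1}w_j(x_1)\,\phi_j^\eps(x'),\qquad w_j''(x_1)=\tfrac{\lambda_j(\Sigma)}{\eps^2}\,w_j(x_1),
\]
each $w_j$ is a linear combination of $e^{\pm\sqrt{\lambda_j(\Sigma)}\,x_1/\eps}$. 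For $v^\eps=v_{\bar k}^\eps$, convergence to $u_0^+$ (with $u_0^+\equiv 0$ on $\R^N\setminus D^+$) forces the trace of $v^\eps$ on $\{x_1=0\}\cap\overline{\mathcal C_\eps}$ to be small in $L^2$, while on $\{x_1=1\}\cap\overline{\mathcal C_\eps}$ it is of size $O(\eps)$ in $L^\infty$ because $u_0^+(\e)=0$ and $u_0^+\in C^1$ near $\e$. An ODE estimate on the $w_j$ combined with Parseval and the scaling of $\phi_j^\eps$ yields exponential smallness on $[0,1/8)\times\eps\Sigma$ at the claimed rate, and propagating this pointwise bound into $D^-$ by uniform harmonic estimates in the spirit of \cite{AFT12,FT12} delivers \eqref{eq:62}.

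The main obstacle is (i): one must decouple the two eigenfunctions at the sharp $\eps^N$ scale, showing that the off-diagonal coupling between $u_0^+$ and $u_0^-$ through the handle (exponentially small by harmonicity) cannot rotate the effective $2$-dimensional eigenbasis away from $(u_0^+, u_0^-)$ once the diagonal perturbations, of orders $\mathfrak C(\Sigma)(\partial u_0^\pm/\partial x_1)^2\eps^N$, differ strictly. Making this uniform in $\eps$ along every subsequence requires careful quantitative control of the cross-terms generated by the test functions $\tilde u_0^{\eps,\pm}$, and this is precisely where the asymmetry hypothesis \eqref{eq:61} plays its decisive role.
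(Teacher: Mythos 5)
Your part (ii) contains a genuine gap, and it is the heart of the theorem. The channel Fourier/ODE analysis you propose (expanding $v^\eps$ in the Dirichlet modes of $\eps\Sigma$ and solving $w_j''=\lambda_j(\Sigma)\eps^{-2}w_j$) only yields exponential smallness in the \emph{interior} of the channel, say on $(\delta,1-\delta)\times(\eps\Sigma)$ — that is already Corollary \ref{stime_integrali_vjeps}. It cannot give \eqref{eq:62}, which concerns the region $[0,1/8)\times(\eps\Sigma)$ adjacent to the left junction together with all of $D^-$: if the trace of $v^\eps$ at $x_1=0$ is merely $o(1)$ (which is all that the soft convergence $v_{\bar k}^\eps\to u_0^+$ provides), then near $x_1=0$ the first mode behaves like $w_1(0)e^{-\sqrt{\lambda_1(\Sigma)}x_1/\eps}$ and its energy on $[0,1/8)$ is only $o(1)$, not exponentially small. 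Worse, the "propagation into $D^-$ by harmonic estimates" fails structurally: $\lambda_0\in\sigma_p(D^-)$ by hypothesis, so the problem $(-\Delta-\lambda_{\bar k}(\Omega^\eps)p)v=f$ on $D^-$ (or $D_\eps^-$) is \emph{nearly resonant}, and a small flux through the aperture can excite an $O(1)$ response proportional to $u_0^-$. Ruling this out requires a quantitative lower bound on $\mathop{\rm dist}(\lambda_{\bar k}(\Omega^\eps),\sigma_p(D_\eps^-))\gtrsim \eps^N$, which is exactly where the asymmetry \eqref{eq:61} must enter for the \emph{localization}, not only for the eigenvalue splitting. Your proposal uses \eqref{eq:61} only in part (i), and $o(1)$ information from part (i) can never be upgraded to the exponential rate in \eqref{eq:62} by the arguments you describe.

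The paper's proof is a short resolvent argument that supplies precisely this missing ingredient. One sets $L_\eps^-=-\Delta-\lambda_{\bar k}(\Omega^\eps)p$ on $\mathcal D^{1,2}(D_\eps^-)$ and computes $L_\eps^-(\eta v_{\bar k}^\eps)=h_\eps$, where $\eta$ cuts off at $x_1\in(1/8,1/4)$, so that $h_\eps$ is supported in the middle of the channel and is $O\big(\eps^{\frac{N-1}{2}}e^{-\sqrt{\lambda_1(\Sigma)}/(32\eps)}\big)$ in the dual norm by Corollary \ref{stime_integrali_vjeps}. Lemma \ref{l:spectral_estimate} — whose proof uses \eqref{conseguenza1}, \eqref{conseguenza2}, Theorem \ref{t:reso} and \eqref{eq:61} to show $\mathop{\rm dist}(\lambda_{\bar k}(\Omega^\eps),\sigma_p(D_\eps^-))\sim c\,\eps^N$ with $c>0$ — gives $\|(L_\eps^-)^{-1}\|=O(\eps^{-N})$, whence $\|\eta v_{\bar k}^\eps\|_{\Di{D_\eps^-}}=O\big(\eps^{-\frac{N+1}{2}}e^{-\sqrt{\lambda_1(\Sigma)}/(32\eps)}\big)$, which squared is \eqref{eq:62}; the convergence statements then follow since any limit of $v_{\bar k}^\eps$ must be supported in $D^+$. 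Your part (i), by contrast, attempts to identify the limits first via a test-function/basis-rotation argument; that route is plausible in spirit but the key cancellation $(\lambda_0-\lambda_{\bar k}(\Omega^\eps))b_\eps=o(\eps^N)$ is asserted rather than derived, and in any case it is unnecessary once the resolvent estimate is in place. I would rebuild part (ii) around an operator bound of the type of Lemma \ref{l:spectral_estimate}.
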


For the two families of eigenfunctions $v_{\bar k}^\eps, v_{\bar k+1}^\eps$ we provide a sharp
asymptotics, extending the result of Theorem~\ref{teo_asintotico_autofunzioni} in the resonant asymmetrical case.

\begin{Theorem}\label{t:reso_autofun_conv}
Under the same assumptions and with the same notations of Theorem
\ref{t:reso}, let $v_{\bar k}^\eps, v_{\bar k+1}^\eps \in{\mathcal
    D}^{1,2}(\Omega^\eps)$ be as in Theorem \ref{t:reso_autofun}. Then 
\begin{align}
\label{eq:6}&\lim_{\eps\to 0^+}
\eps^{-N}\|v_{\bar k}^\eps -u_0^+\|^2_{\Di{\R^N}}
=
\bigg(\frac{\partial u_0^+}{\partial x_1}(\e)\bigg)^{\!\!2}
\mathfrak C(\Sigma),\\
\label{eq:16}&\lim_{\eps\to 0^+}
\eps^{-N}\|v_{\bar k+1}^\eps -u_0^-\|^2_{\Di{\R^N}}
=
\bigg(\frac{\partial u_0^-}{\partial x_1}(\e)\bigg)^{\!\!2}
\mathfrak C(\Sigma),
\end{align}
where $v_{\bar k}^\eps, v_{\bar k+1}^\eps , u_0^+$, and $u_0^-$  are trivially extended to the whole $\R^N$
and $\mathfrak C(\Sigma)$ is defined in \eqref{def_compliance}.
\end{Theorem}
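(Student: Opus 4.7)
The plan is to reduce the resonant asymmetric case to the framework already developed for Theorem \ref{teo_asintotico_autofunzioni}, exploiting the strong localization of $v_{\bar k}^\eps$ and $v_{\bar k+1}^\eps$ provided by Theorem \ref{t:reso_autofun}. I will describe the argument for \eqref{eq:6}; the proof of \eqref{eq:16} is entirely analogous by interchanging the roles of $D^+$ and $D^-$.

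First I would split the Dirichlet integral
\[
\|v_{\bar k}^\eps-u_0^+\|^2_{\Di{\R^N}}
=\int_{D^-\cup([0,1/8)\times(\eps\Sigma))}|\nabla(v_{\bar k}^\eps-u_0^+)|^2\,dx
+\int_{(\R^N\setminus D^-)\cap\{x_1\geq 1/8\}}|\nabla(v_{\bar k}^\eps-u_0^+)|^2\,dx
\]
(recalling that $u_0^+$ is trivially extended to vanish outside $D^+$, so $\nabla u_0^+\equiv0$ on the first region). By the exponential decay estimate \eqref{eq:62} in Theorem \ref{t:reso_autofun}, the first integral is bounded by $O\big(\eps^{-(N+1)}e^{-\sqrt{\lambda_1(\Sigma)}/(16\eps)}\big)$, which is $o(\eps^N)$ as $\eps\to0^+$ and so does not contribute to the limit \eqref{eq:6}.

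It therefore remains to analyse $v_{\bar k}^\eps-u_0^+$ on the region $\{x_1\geq 1/8\}\cap\Omega^\eps$, which is exactly the ``right side'' of the dumbbell together with the upper portion of the handle. On this region the setting becomes formally identical to the one treated in Theorem \ref{teo_asintotico_autofunzioni}: indeed, the simplicity of $\lambda_0$ on $D^+$ and the nondegeneracy assumption on $\partial u_0^+/\partial x_1(\mathbf e_1)$ are exactly those hypotheses, and the coupling with the left chamber enters the equation for $v_{\bar k}^\eps$ only through boundary data on $\{x_1=1/8\}\cap(\eps\Sigma)$, which by \eqref{eq:62} is exponentially small in $H^{1/2}$. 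I would therefore replicate the construction of Section \ref{sec:rate-conv-eigenf}: introduce the transition/corrector functions built from the solution of the compliance minimization \eqref{eq:m(sigma)}, rescaled by $\eps$, write the analogue of the equation satisfied by $v_{\bar k}^\eps-u_0^+$, and invert the operator $F$ in \eqref{def_operatore_F} associated to $\lambda_0$ on $D^+$. Crucially, this operator inversion is unaffected by the resonance with $D^-$, since the simplicity of $\lambda_0$ on $D^+$ alone is what is used; the additional kernel direction carried by $u_0^-$ lives on the opposite chamber and has been absorbed into the exponentially small error.

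The sharp asymptotics then reproduce, on the right chamber, the same leading term
$\big(\partial_{x_1}u_0^+(\mathbf e_1)\big)^2\,\mathfrak C(\Sigma)\,\eps^N$
as in the non-resonant case. The main obstacle I expect is to verify rigorously that the near-degeneracy of $\lambda_{\bar k}(\Omega^\eps)$ and $\lambda_{\bar k+1}(\Omega^\eps)$ does not spoil the inversion of the Fredholm operator: this requires controlling the spectral gap between the two approximating eigenvalues, but Theorem \ref{t:reso} together with hypothesis \eqref{eq:61} guarantees that the two eigenvalues split with rates involving the distinct constants $\big(\partial_{x_1}u_0^\pm\big)^2\mathfrak C(\Sigma)\,\eps^N$, and this polynomial separation of order $\eps^N$, combined with the exponential localization \eqref{eq:62}, is precisely what decouples the two branches and enables the componentwise analysis. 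Once these two ingredients are in place, passing to the limit in $\eps^{-N}\|v_{\bar k}^\eps-u_0^+\|^2_{\Di{\R^N}}$ yields \eqref{eq:6}, and the symmetric argument gives \eqref{eq:16}.
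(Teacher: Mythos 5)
Your overall strategy (localize $v_{\bar k}^\eps$ to the right chamber via \eqref{eq:62}, then invoke the single-chamber sharp asymptotics) is the right one, but the central step is left unjustified and, as stated, does not go through. To ``replicate the construction of Section \ref{sec:rate-conv-eigenf}'' directly for $v_{\bar k}^\eps$ you would need the full blow-up analysis of Lemma \ref{blow_up_vari} at the junction $\e$ for $v_{\bar k}^\eps$ itself, i.e. that $\eps^{-1}v_{\bar k}^\eps(\e+\eps(\cdot-\e))\to \big(\tfrac{\partial u_0^+}{\partial x_1}(\e)\big)\Phi$ together with the corresponding normal-trace convergences of Remark \ref{rem:hdiv}; this is what identifies the limits in \eqref{eq:43}--\eqref{eq:44} and hence the constant $K(R)$ in Theorem \ref{stima_teo_inversione}. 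Those convergences rest on \cite{FT12,AFT12}, which are established under the non-resonance hypothesis \eqref{eq:54}, and your proposal never explains how to recover them in the resonant case (the heuristic that the left chamber only enters through exponentially small boundary data at $x_1=1/8$ is plausible but is not a proof). The paper circumvents exactly this difficulty by inserting an intermediate object: the one-chamber domain $D_\eps^+$ of \eqref{D_eps+} and its normalized eigenfunction $v_\eps^+$, to which Theorem \ref{teo_asintotico_autofunzioni} applies verbatim (no second chamber, so no resonance), giving \eqref{eq:18}. The remaining task is then only to show $\|\eta v_{\bar k}^\eps-v_\eps^+\|^2_{\Di{D_\eps^+}}=o(\eps^N)$, and this is done by linearizing the map $F_\eps$ on $D_\eps^+$ at the \emph{perturbed} eigenpair $(\lambda_{k_0^+}(D_\eps^+),v_\eps^+)$ — not the map $F$ of \eqref{def_operatore_F} at $(\lambda_0,u_0^+)$ on $D^+$, as you propose — because then the residual $F_\eps(\lambda_{\bar k}(\Omega^\eps),\eta v_{\bar k}^\eps)$ is $o(\eps^N)$ (exponentially small boundary/cut-off terms plus $\lambda_{\bar k}(\Omega^\eps)-\lambda_{k_0^+}(D_\eps^+)$, which is exponentially small by Proposition \ref{prop_risonanza} and Lemma \ref{l:tildeomegaeps}), whereas a residual measured against $(\lambda_0,u_0^+)$ is of the critical size $\eps^{N/2}$ and forces one back to the blow-up analysis.

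Your diagnosis of the ``main obstacle'' is also off target. The invertibility used is that of the linearization on a single chamber, which needs only the simplicity of $\lambda_0$ on $D^+$ (respectively the simplicity of $\lambda_{k_0^+}(D_\eps^+)$, with the uniform bound \eqref{eq:64} coming from convergence to a simple limit eigenvalue); the $\eps^N$ splitting between $\lambda_{\bar k}(\Omega^\eps)$ and $\lambda_{\bar k+1}(\Omega^\eps)$ plays no role there. Where hypothesis \eqref{eq:61} genuinely enters is in Proposition \ref{prop_risonanza}: it identifies which branch of eigenvalues of $\widetilde\Omega^\eps$ equals $\lambda_{\bar k}$ and which equals $\lambda_{\bar k+1}$, so that $\lambda_{\bar k}(\Omega^\eps)$ is paired with $\lambda_{k_0^+}(D_\eps^+)$ (and not with $\lambda_{k_0^-}(D_\eps^-)$) up to an exponentially small error. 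Without that pairing the residual estimate, and hence the whole comparison, collapses. To repair your argument you should either prove the blow-up convergence of $v_{\bar k}^\eps$ at $\e$ in the resonant setting, or adopt the paper's intermediate comparison with $v_\eps^+$ on $D_\eps^+$.
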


\subsection{Motivations and references to the literature}\label{sec:motiv-refer-liter}

The continuity of  eigenvalues and eigenfunctions of the Laplace
operator under Dirichlet boundary conditions in varying domains
including the dumbbell case has
been studied in  \cite{bucur2006,daners}. We also refer to \cite{BV} for
a first result about spectral continuity for less general domain's
perturbations and to \cite{hale} (and references therein) for a detailed survey.

As far as estimates
of the rate of convergence are concerned, we mention \cite{HM}, where, among other results, the authors
prove that, in the case of a Helmholtz resonator with a cavity, 
the effect of adding a tubular region with a section of radius of order $\eps$ is to shift the
eigenvalues by a small amount of order at most $\eps^{1/2}$. This generalizes a
previous result of \cite{Ar} where an
$\eps^{1/2}$-rate of convergence for resonances of a Helmholtz
resonator was obtained in dimension $3$. 
We stress that  the case treated in present paper does not allow continuos spectrum for the Dirichlet Laplacian.
As far as we know, no sharp estimates similar to ours can be found in the literature. 
Similar to our settings, we mention  
 \cite{taylor} which contains  an $\eps^a$-bound from above for the a rate of convergence, but not 
the exact asymptotics.
Some other estimates on the rate of convergence of Dirichlet eigenvalues for different domain's
perturbations can be found
in \cite{davies,pang}.

We note that there exists an extensive literature
dealing with Neumann boundary conditions, but, in the case of  dumbbell
domains with thin handles,   the eigenvalues of the Laplacian
may not be continuous, as observed in \cite{arrietaJDE,CH,jimbo} (see
also \cite{nazarov}).

Spectral analysis in thin branching domains arises naturally in the
study of models of propagation of waves in quasi one-dimensional
systems: in this framework we meet the theory of quantum graphs which
provide simplified models of quantum wires, photonic crystals, carbon
nano-structures, thin waveguides and many other problems,
see e.g. \cite{BK,kuchment} for details.
Similarly as in quantum graph theory, in this paper we address to
systems which are composed by different chambers communicating by
connecting regions and which are governed by certain differential
equations.  We mention that, besides their own theoretical interest in
the framework of spectral theory for elliptic operators, such issues
are also related to some engineering problems: elasticity problems in
heterogeneous materials and  limit problems at
the junctions of several domains with different limit dimensions
(namely thin plates with beams or rods),  see e.g. \cite{ciarlet}.

\section{Preliminaries, notation and technical lemmas}\label{sec:prel-notat-techn}

The proof of Theorem \ref{teo_asintotico_autovalori}
is based on the Courant-Fisher \emph{minimax characterization} of
eigenvalues and some estimates from above and below on the
associated Rayleigh quotient computed at suitable test
functions. In this section we introduce the proper test functions on
which the Rayleigh quotient will be estimated to prove upper/lower
bounds, and prove some properties (i.e. point-wise estimates, blow-up
analysis) of such test functions and of eigenfunctions on the domain~$\Omega^\eps$.

\subsection{Transition functions }

We start by  introducing some functions
describing  the domain's change of geometry at the junction, which will
be used for the construction of super-solutions needed for deriving
point-wise estimates on eigenfunctions  and for estimating the
Rayleigh quotient associated to the eigenvalue problem. More
precisely, we consider
\begin{itemize}\itemsep4pt
\item the unique function $\Phi$  which is harmonic in the domain $\widetilde D$,
 has finite energy in
  $T_1^-$, and behaves as $(x_1-1)^+$ as $|x-{\mathbf e}_1|\to+\infty$
  in $D^+$ (here $s^+=\max\{s,0\}$ denotes the positive part of $s$
  for all $s\in\R$);
\item for every $R>2$, the function $z_R$ defined as  the harmonic
  extension  of $\Phi\big|_{\Gamma_R^+}$ in the domain $B_R^+$
  vanishing on $\partial B_R^+\cap \partial D^+$;
\item for every $R>2$, the function $v_R$ defined as  the harmonic
  extension  of $(x_1-1)^+\big|_{\Gamma_R^+}$ in the domain $T_1^-\cup
  B_R^+$
  vanishing on $\partial (T_1^-\cup D^+)$.
\end{itemize}
For all $R>1$, we denote as
$\mathcal H_R$  the completion of
$C^\infty_{\rm c}\big(\big((-\infty,1)\times\R^{N-1}\big)\cup \overline{B_R^+}\big)$
with respect to the norm $\big(\int_{((-\infty,1]\times\R^{N-1})\cup
  B_R^+}|\nabla v|^2dx\big)^{1/2}$, i.e.  $\mathcal H_R$ is the space of functions with
finite energy in $((-\infty,1]\times\R^{N-1})\cup \overline{B_R^+}$ vanishing on
$\{(1,x')\in \R\times\R^{N-1}:|x'|\geq R\}$.

In the sequel, we also denote as  $\lambda_1(\Sigma)$ the first eigenvalue of the Laplace
 operator on $\Sigma$ under null Dirichlet boundary conditions, and as
 $\psi_1^\Sigma(x')$ the corresponding positive
$L^2(\Sigma)$-normalized eigenfunction, so that 
\[
 \begin{cases}
 -\Delta_{x'}\psi_1^\Sigma(x')=
 \lambda_1(\Sigma)\psi_1^\Sigma(x'),&\text{in }\Sigma,\\
 \psi_1^\Sigma=0,&\text{on }\partial\Sigma,
 \end{cases}
\]
 being $\Delta_{x'}=\sum_{j=2}^N\frac{\partial^2}{\partial x_j^2}$,
 $x'=(x_2,\dots,x_N)$.

\smallskip \subsubsection{The function $\Phi$.}
In \cite[Lemma 2.4]{FT12}, it is proved that there exists a unique
function $\Phi$ satisfying
\begin{equation}\label{eq_Phi_1}
\begin{cases}
\int_{T_1^-\cup B^+_{R}}\Big(|\nabla \Phi(x)|^2
+|\Phi(x)|^{2^*}\Big)
\,dx<+\infty\text{ for all }R>1,\\[5pt]
-\Delta \Phi=0\text{ in a distributional sense in }\widetilde D,
\quad \Phi=0\text{ on }\partial \widetilde D,\\[5pt]
\int_{D^+}|\nabla (\Phi-(x_1-1))(x)|^2\,dx<+\infty.
\end{cases}
\end{equation}
Furthermore $\Phi>(x_1-1)^+$ in $\widetilde D$ (by the Strong
Maximum Principle) and, by \cite[Lemma 2.9]{FT12}, there holds
\begin{align}\label{eq:Phiinfty}
  &\Phi(x)=(x_1-1)^++O(|x-{\mathbf e}_1|^{1-N})\quad\text{in $D^+$ as
  }|x-{\mathbf e}_1|\to +\infty,\\
\label{eq:phiminusinfty}&\Phi(x)=O(e^{\sqrt{\lambda_1(\Sigma)}\frac{x_1}{2}})\quad\text{as
  }x_1\to-\infty\text{ uniformly with respect to }x'\in\Sigma.
\end{align}
Let
\begin{align*}
  \Psi: {\mathbb S}^{N-1}\to\R, \quad \Psi (\theta_1,\theta_2,\dots,\theta_N)=\frac{\theta_1}{\Upsilon_N},
\end{align*}
being ${\mathbb S}^{N-1}=\{(\theta_1,\theta_2,\dots,\theta_N)\in
\R^N:\sum_{i=1}^N\theta_i^2=1\}$ the unit $(N-1)$-dimensional sphere
and
\begin{align}\label{eq:upsilonN}
\Upsilon_N=\sqrt{\tfrac12{\textstyle{\int}}_{{\mathbb
        S}^{N-1}}\theta_1^2d\sigma(\theta)}=\sqrt{\frac{\omega_{N-1}}{2N}},
\end{align}
where $\omega_{N-1}$ denotes the volume of the unit sphere ${\mathbb
  S}^{N-1}$, i.e. $\omega_{N-1}=\int_{{\mathbb S}^{N-1}}d\sigma(\theta)$. 
Here
and in the sequel, the notation $d\sigma$ is used to denote the volume
element on $(N-1)$-dimensional surfaces.  
We
notice that, letting  
$${\mathbb
  S}^{N-1}_+:=\{\theta=(\theta_1,\theta_2,\dots,\theta_N)\in{\mathbb
  S}^{N-1}:\theta_1>0\},
$$
$\Psi^+=\Psi\big|_{{\mathbb
  S}^{N-1}_+}=\frac{\theta_1}{\Upsilon_N}$ is the
first positive $L^2({\mathbb S}^{N-1}_+)$-normalized eigenfunction of
$-\Delta_{{\mathbb S}^{N-1}}$ on ${\mathbb S}^{N-1}_+$ under null
Dirichlet boundary conditions satisfying
\begin{equation}\label{eq:eigenpsi+}
-\Delta_{{\mathbb
    S}^{N-1}}\Psi^+=(N-1) \Psi^+\quad\text{on }{\mathbb S}^{N-1}_+.
\end{equation}

\begin{Lemma}\label{lemma_Phi_1}
 Let $\Phi$, $\Psi^+=\frac{\theta_1}{\Upsilon_N}$, and $\Upsilon_N$ be
  as in \eqref{eq_Phi_1}, \eqref{eq:eigenpsi+}, and
  \eqref{eq:upsilonN} respectively, and, for every $r>1$, let us define
  \begin{equation}\label{eq:varphi}
\varphi(r)=
 \int_{\SN_+}\Phi(\mathbf e_1+r\theta)\Psi^+(\theta)\,d\sigma(\theta).  
  \end{equation}
Then
\begin{align}
&\tag{i} \varphi(r)=\varphi(1)r^{1-N}+\Upsilon_N(r-r^{1-N}),
\quad\text{for every $r>1$};\\
&\tag{ii}\int_{\Gamma_r^+}\frac{\partial
  \Phi}{\partial\nu}(x_1-1)\,d\sigma=\Upsilon_N r^N\bigg(N\Upsilon_N+(1-N)\frac{\varphi(r)}{r}\bigg) ,
\quad\text{for every $r>1$}.
\end{align}
\end{Lemma}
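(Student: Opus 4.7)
The plan is to expand $\Phi$ in the Dirichlet eigenfunctions of $-\Delta_{\SN}$ on the half-sphere $\SN_+$ in polar coordinates centered at $\e$, isolate the component along the first eigenfunction $\Psi^+$, and exploit harmonicity of $\Phi$ to reduce (i) to an explicit second-order ODE for $\varphi$.

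First I would write $x=\e+r\theta$ with $r>0$ and $\theta\in\SN_+$. Since $\Phi$ is harmonic on $B_r^+$ and vanishes on the flat portion $\{x_1=1\}\cap B(\e,r)$ of its boundary for every $r>0$, separation of variables shows that the projection $\varphi(r)=\int_{\SN_+}\Phi(\e+r\theta)\Psi^+(\theta)\,d\sigma(\theta)$ satisfies the Euler-type ODE
\[
\varphi''(r)+\frac{N-1}{r}\varphi'(r)-\frac{N-1}{r^2}\varphi(r)=0,\qquad r>0,
\]
in which the zero-order coefficient $N-1$ is the eigenvalue of $\Psi^+$ recorded in \eqref{eq:eigenpsi+}. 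The indicial exponents $\alpha^2+(N-2)\alpha-(N-1)=0$ are $\alpha=1$ and $\alpha=1-N$, so $\varphi(r)=Ar+Br^{1-N}$ on $(0,+\infty)$ for constants $A,B$ to be identified.

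To fix the constants, I would exploit the two pieces of information available on $\Phi$: its behavior at infinity and its value at $r=1$. On $\Gamma_r^+$ one has $x_1-1=r\theta_1=r\Upsilon_N\Psi^+(\theta)$, so the $\Psi^+$-component of $(x_1-1)^+$ is exactly $\Upsilon_N r$; the sharp asymptotics \eqref{eq:Phiinfty} then yield $\varphi(r)-\Upsilon_N r=O(r^{1-N})$ as $r\to+\infty$, which forces $A=\Upsilon_N$. Evaluating the general solution at $r=1$ gives $B=\varphi(1)-\Upsilon_N$, and rearranging produces exactly the formula claimed in (i).

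For (ii) I would work in the same spherical coordinates on $\Gamma_r^+$: the outward unit normal at $\e+r\theta$ is $\theta$, hence $\partial\Phi/\partial\nu=\partial_r\Phi(\e+r\theta)$; combining this with $d\sigma=r^{N-1}\,d\sigma(\theta)$ and $x_1-1=r\Upsilon_N\Psi^+(\theta)$ gives
\[
\int_{\Gamma_r^+}\frac{\partial\Phi}{\partial\nu}(x_1-1)\,d\sigma=\Upsilon_N r^N\varphi'(r).
\]
Differentiating the formula from (i) and performing a one-line algebraic manipulation shows $\varphi'(r)=N\Upsilon_N+(1-N)\varphi(r)/r$, which is exactly the right-hand side of (ii). The only point requiring care is justifying termwise interaction with the spherical-harmonic expansion — i.e.\ that $\varphi$ inherits the ODE pointwise on $(0,+\infty)$ and that \eqref{eq:Phiinfty} transfers to the projection — but both are routine consequences of the interior elliptic regularity of $\Phi$ in $B_r^+$ combined with the finite-energy hypothesis in \eqref{eq_Phi_1}, so I do not expect a substantive obstacle.
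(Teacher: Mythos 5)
Your proof of (ii) is exactly the paper's: the identity $\int_{\Gamma_r^+}\tfrac{\partial\Phi}{\partial\nu}(x_1-1)\,d\sigma=\Upsilon_N r^N\varphi'(r)$ followed by differentiation of (i). For (i) the paper simply cites \cite[Lemma 2.2]{AFT12}, and your separation-of-variables reconstruction is the standard argument behind that citation, so the approach is essentially the same.

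One inaccuracy worth correcting: $\Phi$ does \emph{not} vanish on the flat portion $\{x_1=1\}\cap B(\mathbf e_1,r)$ for every $r>0$. By \eqref{eq:condsigma} the disk $\{|x'|\le 1/2\}$ lies inside $\Sigma$, and $\Phi>0$ throughout $\widetilde D$ (Strong Maximum Principle), so $\Phi>0$ on $\{(1,x'):x'\in\Sigma\}$; the Dirichlet condition on the equator holds only once $|x'|\ge 1$, i.e. only on $\Gamma_r^+$ with $r>1$. Hence the Euler ODE and the representation $\varphi(r)=Ar+Br^{1-N}$ are valid on $(1,+\infty)$, not on $(0,+\infty)$ as you wrote. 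This does not damage your argument, since you determine $A$ from the $r\to+\infty$ asymptotics \eqref{eq:Phiinfty} and $B$ from the value $\varphi(1)$, both of which use only $r\ge 1$; but the claim as phrased is false and the restriction to $r>1$ should be made explicit.
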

\begin{pf}
Part (i) is proved in \cite[Lemma 2.2]{AFT12}. To prove (ii) we
observe that 
\[
\int_{\Gamma_r^+}\frac{\partial
  \Phi}{\partial\nu}(x_1-1)\,d\sigma=\Upsilon_N r^N \varphi'(r)
\]
so that the thesis immediately follows from differentiation of (i) and
simple calculations.  
\end{pf}

\begin{Lemma}\label{l:secondo_lemma_Phi_1}
 Let $\Phi$ as in \eqref{eq_Phi_1} and $J_\Sigma:{\mathcal
   D}^{1,2}(\widetilde D)\to\R$  as in \eqref{eq:Jsigma}. 
Then 
\begin{align*}
{\rm (i)}\quad& r^{N-1}\Big(\Phi({\mathbf e}_1+r\theta)-r\theta_1\Big)\to
\frac{1}{\Upsilon_N^2}\bigg(\int_{{\mathbb S}^{N-1}_+}(\Phi({\mathbf
  e}_1+\theta)-\theta_1)\theta_1\,d\sigma\bigg)\theta_1\\
& \text{ in $C^{1,\alpha}({\mathbb S}^{N-1}_+)$ as $r\to+\infty$, 
 for every $\alpha\in(0,1)$;}\\[5pt]
{\rm (ii)}\quad&  J_\Sigma (\Phi-(x_1-1)^+)=\min_{w\in \mathcal
  D^{1,2}(\widetilde D)} J_\Sigma(w);\\
{\rm (iii)}\quad&  \int_{\widetilde
  D}\nabla(\Phi-(x_1-1)^+)\cdot\nabla v(x)\,dx=\int_\Sigma
v(1,x')\,dx',\quad\text{for every }v\in \mathcal
  D^{1,2}(\widetilde D).
\end{align*}
\end{Lemma}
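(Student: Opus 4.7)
My plan is to establish (iii) by integration by parts, to deduce (ii) from it via a convexity argument, and to prove (i) separately through a spherical-harmonic expansion combined with Lemma \ref{lemma_Phi_1}(i). Set $W:=\Phi-(x_1-1)^+$. The inclusion $W\in\mathcal D^{1,2}(\widetilde D)$ is immediate from \eqref{eq_Phi_1}, since $W=\Phi$ on $T_1^-$ and $W=\Phi-(x_1-1)$ on $D^+$, and both pieces have finite Dirichlet energy.

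For (iii), I would take a test function $v\in C^\infty_c(\widetilde D)$ and split
\begin{equation*}
\int_{\widetilde D}\nabla W\cdot\nabla v\,dx=\int_{\widetilde D}\nabla\Phi\cdot\nabla v\,dx-\int_{D^+}\partial_{x_1}v\,dx.
\end{equation*}
The first integral vanishes since $\Phi$ is harmonic on the connected open set $\widetilde D$ and $v$ has compact support inside it. For the second, Fubini combined with the vanishing of $v$ on $\{1\}\times(\R^{N-1}\setminus\overline\Sigma)$ gives $-\int_\Sigma v(1,x')\,dx'$, which produces the identity in (iii) after combining signs. Density of $C^\infty_c(\widetilde D)$ in $\mathcal D^{1,2}(\widetilde D)$ together with continuity of the trace $v\mapsto v(1,\cdot)|_\Sigma$ (the section $\Sigma$ being bounded) then extends (iii) to all $v\in\mathcal D^{1,2}(\widetilde D)$. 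For (ii), given any $w\in\mathcal D^{1,2}(\widetilde D)$ and setting $v=w-W$, a direct expansion of $J_\Sigma$ yields
\begin{equation*}
J_\Sigma(w)-J_\Sigma(W)=\int_{\widetilde D}\nabla W\cdot\nabla v\,dx+\tfrac12\int_{\widetilde D}|\nabla v|^2\,dx-\int_\Sigma v(1,x')\,dx'=\tfrac12\int_{\widetilde D}|\nabla v|^2\,dx\geq 0,
\end{equation*}
once the first and third summands cancel by (iii), so $W$ is the (unique) minimizer.

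For (i), I restrict to $D^+$ where $w(x):=\Phi(x)-(x_1-1)$ is harmonic with finite Dirichlet energy by \eqref{eq_Phi_1}. In polar coordinates $x=\mathbf e_1+r\theta$, $\theta\in\mathbb S^{N-1}_+$, the condition $\Sigma\subset\{|x'|<1\}$ from \eqref{eq:condsigma} forces $w$ to vanish on $\{r>1,\,\theta_1=0\}$, so separation of variables produces, for $r>1$,
\begin{equation*}
w(\mathbf e_1+r\theta)=\sum_{k\geq 1}b_k\,r^{-(\ell_k+N-2)}\psi_k(\theta),
\end{equation*}
where $\{(\psi_k,\ell_k(\ell_k+N-2))\}_k$ are the Dirichlet eigenpairs of $-\Delta_{\mathbb S^{N-1}}$ on $\mathbb S^{N-1}_+$; the growing radial modes $r^{\ell_k}\psi_k$ are excluded because the energy $\int_{B_R\cap D^+}|\nabla(r^{\ell_k}\psi_k)|^2\,dx$ diverges as $R\to+\infty$ when $N\geq 3$. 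Since $(\psi_1,\ell_1)=(\Psi^+,1)$ with decay $r^{1-N}$, while $\ell_k\geq 2$ for $k\geq 2$ produces decay at least like $r^{-N}$, multiplying by $r^{N-1}$ yields $L^2$-convergence to $b_1\Psi^+$ with remainder of order $r^{-1}$. To identify $b_1$ I test against $\Psi^+$: the decomposition gives $\int_{\mathbb S^{N-1}_+}w(\mathbf e_1+r\theta)\Psi^+(\theta)\,d\sigma=b_1\,r^{1-N}$, while Lemma \ref{lemma_Phi_1}(i) rearranged as $\varphi(r)-\Upsilon_N r=(\varphi(1)-\Upsilon_N)r^{1-N}$ forces $b_1=\varphi(1)-\Upsilon_N$. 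A short computation using $\Psi^+=\theta_1/\Upsilon_N$ and $\int_{\mathbb S^{N-1}_+}\theta_1^2\,d\sigma=\Upsilon_N^2$ rewrites $b_1\Psi^+(\theta)$ exactly in the form stated. The promotion from $L^2$- to $C^{1,\alpha}(\mathbb S^{N-1}_+)$-convergence follows from standard interior Schauder estimates applied to the harmonic function $r^{N-1}w(\mathbf e_1+r\,\cdot)$ on thin spherical shells. The most delicate step is this separation-of-variables argument: ruling out the growing radial modes and isolating the leading one relies crucially on the finite-energy bound and on $N\geq 3$, after which the matching with Lemma \ref{lemma_Phi_1}(i) is a routine coefficient computation.
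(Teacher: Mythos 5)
Your argument is correct, but it is genuinely different in character from the paper's, which proves nothing directly: the paper identifies the same auxiliary function $w=\Phi-(x_1-1)$ on $D^+$ and then disposes of (i) by invoking the general asymptotic theorem \cite[Theorem 1.5]{FFT}, and of (ii)--(iii) by citing \cite[Lemma 2.4]{FT12} and \cite{AT12}. What you do instead is reconstruct the special case by hand. For (ii)--(iii) your integration by parts plus completing the square is exactly the standard variational argument hiding behind those citations; the only points deserving a word are that $\Phi-(x_1-1)^+$ belongs to the completion $\mathcal D^{1,2}(\widetilde D)$ (finite energy alone is not quite membership; one also uses the vanishing of $\Phi$ on $\partial\widetilde D$ and the decay \eqref{eq:Phiinfty}), and that the trace functional $v\mapsto\int_\Sigma v(1,x')\,dx'$ is continuous on $\mathcal D^{1,2}(\widetilde D)$ — both of which the paper already assumes in setting up $J_\Sigma$. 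For (i), your separation of variables is legitimate precisely because \eqref{eq:condsigma} forces $w$ to vanish on $\{(1,x'):|x'|\geq 1\}=\{r\geq 1,\ \theta_1=0\}$, so that on $\{r>1\}\cap D^+$ the angular operator is exactly the Dirichlet Laplace--Beltrami operator on ${\mathbb S}^{N-1}_+$; the exclusion of growing modes by the finite-energy condition, the spectral gap $\ell_2=2$, and the identification $b_1=\varphi(1)-\Upsilon_N$ (which you could also read off directly by setting $r=1$ in $\int_{{\mathbb S}^{N-1}_+}w\Psi^+\,d\sigma=b_1r^{1-N}$, without appealing to Lemma \ref{lemma_Phi_1}(i)) are all sound. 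The one imprecision is the final regularity upgrade: interior Schauder estimates only give $C^{1,\alpha}$-convergence on compact subsets of the open half-sphere, so if convergence up to the equator is intended you must use boundary Schauder estimates on the flat portion $\{\theta_1=0\}$ where the rescaled functions vanish — this is equally standard, but it is a boundary, not an interior, estimate. What your route buys is a self-contained proof exploiting the flat geometry; what the paper's citation buys is applicability to the more general settings (curved junctions, potentials) treated in \cite{FFT}.
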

\begin{pf}
Let $w:D^+\to\R$, $w(x)=\Phi(x)-(x_1-1)$. From \eqref{eq_Phi_1} we
have that $-\Delta w=0$ in $D^+$, $w>0$ in $D^+$, $w=0$ on $\{(1,x'):|x'|>1\}$, and
$\int_{D^+}|\nabla w(x)|^2\,dx<+\infty$. Then, (i) follows
from \cite[Theorem 1.5]{FFT} applied to the function $w$. 

Statements (ii) and (iii) are  contained in \cite[Lemma 2.4]{FT12} and \cite{AT12}.
\end{pf}

\noindent As a consequence of the previous lemma, it is possible to characterize 
the minimum $m(\Sigma)$ defined in \eqref{eq:m(sigma)} in terms of the function $\Phi$.

\begin{Corollary}\label{cor:min_phi1}
Let $J_\Sigma$ as in \eqref{eq:Jsigma}, $m(\Sigma)$ as in
\eqref{eq:m(sigma)}, and $\Phi$ as in \eqref{eq_Phi_1}. Then 
\[
m(\Sigma)=-\frac12 \int_\Sigma \Phi(1,x')\,dx'.
\]  
\end{Corollary}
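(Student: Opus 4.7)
The proof is essentially an integration-by-parts computation at the explicit minimizer, so the plan is very short.

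First, I would take the function $w_0 := \Phi - (x_1-1)^+$, which by Lemma \ref{l:secondo_lemma_Phi_1}(ii) is a minimizer of $J_\Sigma$ over $\mathcal D^{1,2}(\widetilde D)$. By part (iii) of the same lemma applied to the test function $v = w_0 \in \mathcal D^{1,2}(\widetilde D)$, one obtains the Dirichlet-energy identity
\begin{equation*}
\int_{\widetilde D}|\nabla w_0(x)|^2\,dx = \int_\Sigma w_0(1,x')\,dx'.
\end{equation*}

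Next, since on the section $\{x_1=1,\ x'\in\Sigma\}$ we have $(x_1-1)^+=0$, the boundary trace reduces to $w_0(1,x') = \Phi(1,x')$. Plugging this into the functional,
\begin{equation*}
m(\Sigma)=J_\Sigma(w_0)=\frac12\int_{\widetilde D}|\nabla w_0|^2\,dx-\int_\Sigma w_0(1,x')\,dx'=-\frac12\int_\Sigma \Phi(1,x')\,dx',
\end{equation*}
which is exactly the claimed identity.

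There is no real obstacle here: the only subtlety is ensuring that (iii) of Lemma \ref{l:secondo_lemma_Phi_1} is legitimately applicable with the choice $v = w_0$ itself, i.e.\ that $w_0 \in \mathcal D^{1,2}(\widetilde D)$. This follows from \eqref{eq_Phi_1}: $\Phi$ has finite Dirichlet energy on $T_1^-$, and on $D^+$ the function $\Phi-(x_1-1)^+ = \Phi-(x_1-1)$ has finite Dirichlet energy by the third line of \eqref{eq_Phi_1}, so the two pieces glue into a function in $\mathcal D^{1,2}(\widetilde D)$. With this remark the above two displays give the corollary immediately.
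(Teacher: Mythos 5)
Your proof is correct and is essentially identical to the paper's: both use Lemma \ref{l:secondo_lemma_Phi_1}(ii) to identify $\Phi-(x_1-1)^+$ as the minimizer and then plug it as the test function $v$ into Lemma \ref{l:secondo_lemma_Phi_1}(iii), noting that its trace on $\Sigma$ equals $\Phi(1,x')$. Your extra remark on the membership $\Phi-(x_1-1)^+\in\mathcal D^{1,2}(\widetilde D)$ is a welcome bit of care that the paper leaves implicit.
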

\begin{pf}
From Lemma \ref{l:secondo_lemma_Phi_1}(ii), we have that 
\[
m(\Sigma)=   \frac12\int_{\widetilde
    D}|\nabla 
(\Phi-(x_1-1)^+)|^2\,dx-\int_{\Sigma}
 \Phi(1,x')\,dx'
\]
and the conclusion follows taking $v=\Phi-(x_1-1)^+$ in  Lemma \ref{l:secondo_lemma_Phi_1}(iii).
\end{pf}

\smallskip \subsubsection{The function $z_R$.}
For every $R>1$, we denote as $z_R$ the
unique solution to the minimization problem
\begin{equation*}
  \int_{B_{R}^+}|\nabla z_R|^2\,dx 
\\=\min\bigg\{
\int_{B_{R}^+}|\nabla v|^2\,dx:
v\in  H^1(B_{R}^+),\ 
v=0\text{ on }\partial D^+,\text{ and }v=\Phi\text{ on }\Gamma^+_{R}\bigg\},
\end{equation*}
which then solves
\begin{equation}\label{eq:z_R}
 \begin{cases}
  -\Delta z_R =0, &\text{ in } B_{R}^+,\\
  z_R=\Phi, &\text{ on }\Gamma_{R}^+,\\
  z_R = 0, &\text{ on }\partial D^+,
 \end{cases}
\end{equation}

\begin{Lemma}\label{lemma_z_R}
Let $z_R$, $\Psi^+=\frac{\theta_1}{\Upsilon_N}$, and $\Upsilon_N$ be
  as in \eqref{eq:z_R}, \eqref{eq:eigenpsi+} and
  \eqref{eq:upsilonN} respectively, and, for every $R>2$ and $r\in(0,R]$, let us define
\begin{equation}\label{eq:phi}
\phi_R(r)=
 \int_{\SN_+}z_R(\mathbf e_1+r\theta)\Psi^+(\theta)\,d\sigma(\theta).  
  \end{equation}
Then 
\begin{equation*}
 \int_{\Gamma_r^+}\frac{\partial
  z_R}{\partial\nu}(x_1-1)\,d\sigma=\Upsilon_N r^N \dfrac{\phi_R(R)}{R},
\quad\text{for every $r\in(0,R]$}.
\end{equation*}
\end{Lemma}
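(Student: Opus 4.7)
\medskip
\noindent\textbf{Plan of proof.} The approach parallels that of Lemma \ref{lemma_Phi_1}(i)--(ii), but now exploits the fact that $z_R$ is regular at the junction point $\mathbf{e}_1$. The idea is to derive an ODE for $\phi_R$ from the harmonicity of $z_R$ by projecting onto the first Dirichlet eigenfunction of the hemisphere, solve the ODE explicitly, and then read off the integral identity by differentiation.

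First, I would write the Laplacian in spherical coordinates centered at $\mathbf{e}_1$:
\[
\Delta z_R(\mathbf e_1 + r\theta) = \frac{1}{r^{N-1}}\frac{\partial}{\partial r}\!\left(r^{N-1}\frac{\partial z_R}{\partial r}\right) + \frac{1}{r^2}\Delta_{\mathbb S^{N-1}}z_R = 0 \quad \text{in } B_R^+.
\]
Multiply by $\Psi^+(\theta)$ and integrate over $\mathbb S^{N-1}_+$. Since $z_R=0$ on $\partial D^+$ (which corresponds to $\partial \mathbb S^{N-1}_+$) and $\Psi^+=0$ on $\partial \mathbb S^{N-1}_+$, Green's identity on the hemisphere together with \eqref{eq:eigenpsi+} yields
\[
\int_{\mathbb S^{N-1}_+}\!\Delta_{\mathbb S^{N-1}}z_R\cdot\Psi^+\,d\sigma=\int_{\mathbb S^{N-1}_+}\!z_R\cdot\Delta_{\mathbb S^{N-1}}\Psi^+\,d\sigma=-(N-1)\phi_R(r).
\]
This gives the Euler-type ODE
\[
\frac{1}{r^{N-1}}\bigl(r^{N-1}\phi_R'(r)\bigr)'-\frac{N-1}{r^2}\phi_R(r)=0,\qquad r\in(0,R],
\]
whose general solution is $\phi_R(r)=Ar+Br^{1-N}$.

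The next step is to rule out the singular term. The function $z_R$ vanishes on the hyperplane $\partial D^+=\{x_1=1\}$ and is harmonic in $B_R^+$; by odd reflection across this hyperplane it extends to a harmonic (hence smooth) function on $B(\mathbf e_1, R)$ that vanishes at $\mathbf e_1$. Consequently $\phi_R$ is continuous at $0$ with $\phi_R(0)=0$, which is compatible with $Ar+Br^{1-N}$ only if $B=0$. Evaluating at $r=R$ gives $A=\phi_R(R)/R$, hence
\[
\phi_R(r)=\frac{\phi_R(R)}{R}\,r,\qquad \phi_R'(r)=\frac{\phi_R(R)}{R}.
\]

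Finally, I would compute the flux integral directly. Parametrizing $\Gamma_r^+$ by $x=\mathbf e_1+r\theta$ with $\theta\in\mathbb S^{N-1}_+$, one has $x_1-1=r\theta_1=r\Upsilon_N\Psi^+(\theta)$, $\frac{\partial z_R}{\partial \nu}=\frac{\partial z_R}{\partial r}(\mathbf e_1+r\theta)$, and $d\sigma=r^{N-1}\,d\sigma(\theta)$, so that
\[
\int_{\Gamma_r^+}\frac{\partial z_R}{\partial\nu}(x_1-1)\,d\sigma=\Upsilon_N r^N\phi_R'(r)=\Upsilon_N r^N\,\frac{\phi_R(R)}{R},
\]
which is the stated identity. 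The only subtle point is the justification of the boundary vanishing when applying Green's formula on $\mathbb S^{N-1}_+$ (equivalently, the regularity of $z_R$ up to $\partial D^+$ inside $B_R^+$); this is standard since $z_R$ is harmonic with smooth Dirichlet data on the flat portion of $\partial B_R^+$, and is the main technical check one should record.
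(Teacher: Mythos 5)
Your proof is correct and follows essentially the same route as the paper: project the harmonicity of $z_R$ onto $\Psi^+$ to get the Euler ODE with general solution $Ar+Br^{1-N}$, kill the singular term via regularity of $z_R$ at $\mathbf e_1$, and read off the flux from $\int_{\Gamma_r^+}\frac{\partial z_R}{\partial\nu}(x_1-1)\,d\sigma=\Upsilon_N r^N\phi_R'(r)$. The paper merely writes the ODE in the equivalent integrated form $\big(\frac{\phi_R(r)}{r}\big)'=\frac{C_R}{r^{N+1}}$; no substantive difference.
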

\begin{pf}
We first observe that 
\begin{equation}\label{eq:23}
 \phi_R'(r)=\dfrac{1}{\Upsilon_N r^N} \int_{\Gamma_r^+}\frac{\partial
  z_R}{\partial\nu}(x_1-1)\,d\sigma,\quad\text{for all }r\in(0,R]. 
\end{equation}
Since $z_R$ is harmonic in $B_R^+$, there exists $C_R\in\R$ such that 
$\big(\frac{\phi_R(r)}{r}\big)'=\frac{C_R}{r^{N+1}}$ in $(0,R]$, so
that
\begin{equation*}
\phi_R(r)= r \frac{\phi_R(R)}{R} + \frac{C_R}{N}r R^{-N}
-\frac{C_R}{N}r^{1-N}, \quad \text{for every $r\in(0,R]$.}  
\end{equation*}
From the regularity of $z_R$ in ${\mathbf e}_1$, we deduce $C_R=0$ and then 
$\phi_R(r)=r\frac{\phi_R(R)}{R}$. Hence 
\begin{equation}\label{eq:22} 
\phi_R'(r)=\frac{\phi_R(R)}{R},\quad\text{for every }r\in (0,R],
\end{equation}
The thesis follows from \eqref{eq:23} and \eqref{eq:22}.
\end{pf}

\smallskip \subsubsection{The function $v_R$.}
For every $R>1$, we denote as $v_R$ 
 the
unique solution to the minimization problem
\[
\int_{T_1^-\cup B_R^+}|\nabla v_R|^2\,dx 
=\min\bigg\{
\int_{T_1^-\cup B_R^+}|\nabla v|^2\,dx:
v\in  \mathcal H_{0,R}\text{ and }v=(x_1-1)\text{ on }\Gamma^+_{R}\bigg\},
\]
where
$\mathcal H_{0,R}$ is the completion of
$\big\{v\in C^\infty_{\rm c}\big(\,\overline{T_1^-\cup B_R^+ }\,\big):
v=0\text{ on }\partial(T_1^-\cup D^+)\big\}$ with respect to the norm
$\big(\int_{T_1^-\cup B_R^+}|\nabla v|^2dx\big)^{1/2}$. This function then solves
\begin{equation}\label{eq:v_R}
 \begin{cases}
  -\Delta v_R =0, &\text{ in } T_1^-\cup B_R^+,\\
  v_R=(x_1-1), &\text{ on }\Gamma_{R}^+,\\
  v_R = 0, &\text{ on }\partial(T_1^-\cup D^+).
 \end{cases}
\end{equation}

\begin{Lemma}\label{lemma_v_R}
  For every $R>2$, let $v_R$, $\Psi^+=\frac{\theta_1}{\Upsilon_N}$, and $\Upsilon_N$ be
  as in \eqref{eq:v_R}, \eqref{eq:eigenpsi+}, and
  \eqref{eq:upsilonN} respectively, and, for every $r\in(1,R]$, let us define
  \begin{equation*}
\chi_R(r)=
 \int_{\SN_+}v_R(\mathbf e_1+r\theta)\Psi^+(\theta)\,d\sigma(\theta).  
  \end{equation*}
Then
\begin{align}
 &\tag{i} v_R \to \Phi \quad \text{as $R\to+\infty$ in $\mathcal H_t$ for all $t>2$;} \\
 &\tag{ii} \int_{\Gamma_R^+}\frac{\partial
  v_R}{\partial\nu}(x_1-1)\,d\sigma=\Upsilon_N \,
\frac{\Upsilon_N (R^N+N-1)-N\chi_R(1)}{1-R^{-N}}.
\end{align}
\end{Lemma}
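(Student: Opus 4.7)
The strategy for part (i) is to set $w_R = v_R - \Phi$ on $T_1^-\cup B_R^+$ and show its Dirichlet energy vanishes as $R\to+\infty$. Note that $w_R$ is harmonic there, vanishes on $\partial(T_1^-\cup D^+)$, and by \eqref{eq:Phiinfty} satisfies $|w_R| = O(R^{1-N})$ on $\Gamma_R^+$. Using \eqref{eq:phiminusinfty} together with a standard Fourier expansion of $v_R$ in the cross-sectional Dirichlet eigenfunctions $\{\psi_j^\Sigma\}$ (whose modes decay at rate at least $\sqrt{\lambda_1(\Sigma)}$ since $v_R$ has finite energy in the tube and vanishes on its lateral walls), one sees that $w_R$ decays exponentially as $x_1\to-\infty$. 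Applying the maximum principle to the truncations $\{x_1>-M\}\cap(T_1^-\cup B_R^+)$ and letting $M\to+\infty$ upgrades the boundary estimate to $\|w_R\|_{L^\infty} = O(R^{1-N})$. A rescaling $y=(x-\mathbf e_1)/R$ together with standard elliptic regularity up to the flat portion of $\partial D^+$ then gives $|\partial_\nu w_R| = O(R^{-N})$ on $\Gamma_R^+$. Green's identity (justified by the same truncation argument, whose boundary term at $\{x_1=-M\}\cap\Sigma$ vanishes in the limit) yields
\[
\int_{T_1^-\cup B_R^+}|\nabla w_R|^2\,dx = -\int_{\Gamma_R^+}(\Phi-(x_1-1))\,\frac{\partial w_R}{\partial\nu}\,d\sigma = O(R^{1-N})\cdot O(R^{-N})\cdot O(R^{N-1}) = O(R^{-N}),
\]
which, restricted to $T_1^-\cup B_t^+\subset T_1^-\cup B_R^+$ for $R>t$, proves (i).

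For part (ii), I would mirror the proof of Lemma \ref{lemma_z_R}. Writing $(x_1-1)|_{\Gamma_r^+} = \Upsilon_N r\Psi^+(\theta)$ and converting the surface integral via $d\sigma_x = r^{N-1}d\sigma_\theta$ gives
\[
\chi_R'(r) = \frac{1}{\Upsilon_N r^N}\int_{\Gamma_r^+}\frac{\partial v_R}{\partial\nu}(x_1-1)\,d\sigma, \qquad r\in(1,R].
\]
For $r\in(1,R]$ the spherical shell $\Gamma_r^+$ meets $\partial D^+$ only at points with $|x'|=r>1$, hence outside $\Sigma$, so in the polar parametrization $v_R$ vanishes on $\partial\SN_+$. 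Projecting $\Delta v_R = 0$ onto the first Dirichlet eigenfunction $\Psi^+$ of $-\Delta_{\SN}$ on $\SN_+$ and using \eqref{eq:eigenpsi+} (the boundary terms on $\partial\SN_+$ drop by the two vanishing conditions) produces the Euler ODE
\[
r^2\chi_R''(r) + (N-1)r\chi_R'(r) - (N-1)\chi_R(r) = 0,
\]
whose general solution is $\chi_R(r) = A_R r + B_R r^{1-N}$. The boundary identity $v_R|_{\Gamma_R^+} = \Upsilon_N R\Psi^+(\theta)$ combined with $\|\Psi^+\|_{L^2(\SN_+)}=1$ gives $\chi_R(R)=R\Upsilon_N$, while $\chi_R(1)$ is a free parameter. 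Solving the resulting $2\times 2$ system yields
\[
A_R = \frac{\Upsilon_N-\chi_R(1)R^{-N}}{1-R^{-N}}, \qquad B_R = \frac{\chi_R(1)-\Upsilon_N}{1-R^{-N}},
\]
after which substituting into $\Upsilon_N R^N \chi_R'(R) = \Upsilon_N R^N[A_R+(1-N)B_R R^{-N}]$ and simplifying delivers the claimed identity.

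The main difficulty lies in part (i): obtaining the sharp pointwise bound $\|w_R\|_\infty = O(R^{1-N})$ on the unbounded domain (which requires a maximum principle tailored to exponential decay at the tube's end) and the matching gradient estimate on $\Gamma_R^+$ via boundary Schauder estimates near $\Gamma_R^+\cap\partial D^+$. Once these are in hand, the energy identity and the whole of (i) are immediate. Part (ii), by contrast, is in essence a separation-of-variables calculation that is transparent once the reduction to the second-order Euler equation for $\chi_R$ has been made and the two boundary data at $r=1$ and $r=R$ are identified.
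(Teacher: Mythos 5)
Your part (ii) is essentially the paper's argument in different notation: you write the projected Laplacian as the second-order Euler ODE $r^2\chi_R''+(N-1)r\chi_R'-(N-1)\chi_R=0$ with general solution $A_R r + B_R r^{1-N}$, while the paper factors it as $\bigl(r^{N+1}(\chi_R/r)'\bigr)'=0$ and integrates; the boundary conditions $\chi_R(R)=\Upsilon_N R$ and the expression in terms of $\chi_R(1)$ are identical, and the final algebra checks out.

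Part (i) takes a genuinely different and more laborious route, and as written it has a gap. The paper's proof is a one-step Dirichlet Principle: since $v_R-\Phi$ is the harmonic extension of $(x_1-1)-\Phi\big|_{\Gamma_R^+}$ in $T_1^-\cup B_R^+$ (vanishing on $\partial(T_1^-\cup D^+)$), its energy is bounded by that of the competitor $\eta_R\bigl((x_1-1)-\Phi\bigr)$, and the estimate \eqref{eq:25} uses only \eqref{eq_Phi_1} (finite energy of $\Phi-(x_1-1)$ in $D^+$) and the pointwise bound \eqref{eq:Phiinfty}. Your route instead sets $w_R=v_R-\Phi$, derives a sup bound $\|w_R\|_{L^\infty}=O(R^{1-N})$ by the maximum principle, claims the normal-derivative bound $|\partial_\nu w_R|=O(R^{-N})$ on $\Gamma_R^+$, and closes with Green's identity. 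The sup bound and the Green's identity step are sound, but the gradient estimate is the weak link: a bound on $\partial_\nu w_R$ up to $\Gamma_R^+$ cannot follow from an $L^\infty$ bound on the boundary datum alone (after rescaling by $R$, a boundary datum of size $R^{1-N}$ can have $C^1$-norm of any larger order, and the normal derivative of the harmonic extension scales with the $C^{1,\alpha}$-norm, not the sup norm). What you need is that $R^{N-1}\bigl(\Phi(\mathbf e_1+R\theta)-R\theta_1\bigr)$ is bounded in $C^{1,\alpha}(\SN_+)$ — this is exactly Lemma~\ref{l:secondo_lemma_Phi_1}(i), not \eqref{eq:Phiinfty}, and you do not invoke it. With that citation your Schauder-plus-rescaling step and hence the whole of (i) go through (and even yield the sharper rate $O(R^{-N})$), but the appeal to ``standard elliptic regularity up to the flat portion of $\partial D^+$'' as stated does not supply the missing regularity of the datum on the curved portion $\Gamma_R^+$. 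The paper's Dirichlet-Principle approach sidesteps the issue entirely, since the energy of the cutoff competitor only requires the gradient of $\Phi-(x_1-1)$ in the \emph{interior} annulus (controlled by finite energy) and the $L^\infty$ bound on the function itself.
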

\begin{proof}
To prove  (i), for any $t<R$, we estimate
\begin{align*}
 \int_{T_1^- \cup B_t^+} \abs{\nabla (v_R - \Phi)}^2dx
&\leq \int_{T_1^- \cup B_R^+} \abs{\nabla (v_R - \Phi)}^2\,dx \\
&\leq \int_{T_1^- \cup B_R^+} \abs{\nabla (\eta_R(x_1-1 - \Phi))}^2 \,dx
\end{align*}
via the Dirichlet Principle since $v_R - \Phi$ is harmonic in $T_1^-
\cup B_R^+$ and $(v_R - \Phi)\big|_{\Gamma_R^+}=(x_1-1)-\Phi
\big|_{\Gamma_R^+}$, being $\eta_R$ a smooth cut-off function such
that 
\begin{equation}\label{eq:cutoff}
\eta_R\equiv 0 \text{ in }T_1^- \cup B_{R/2}^+,\quad
\eta\equiv 1 \text{ in }D^+\setminus B_R^+,\quad
0\leq\eta_R\leq1, \quad|\nabla \eta_R|\leq \frac 4R
\text{ in }B_R^+
\setminus B_{R/2}^+. 
\end{equation}
In view of  \eqref{eq_Phi_1} and \eqref{eq:Phiinfty}
\begin{align}\label{eq:25}
  \int_{T_1^- \cup B_R^+} &\abs{\nabla (\eta_R(x_1-1 - \Phi))}^2 dx\\
 \notag &\leq 2 \int_{B_R^+\setminus B_{R/2}^+ } \abs{\nabla \eta_R}^2(x_1-1 - \Phi)^2dx 
+ 2 \int_{D^+\setminus B_{R/2}^+} \eta_R^2 \abs{\nabla(x_1-1 - \Phi)}^2 dx\\
  &\notag\leq {\rm const\,} R^{-2} R^{2-2N} R^N + o(1) = o(1)
\end{align}
as $R\to+\infty$, thus proving (i).

 To prove (ii), we observe that 
\begin{equation}\label{eq:21}
\int_{\Gamma_r^+}\frac{\partial
  v_R}{\partial\nu}(x_1-1)\,d\sigma
= r^N\Upsilon_N \chi_R'(r), \quad \text{for every }r\in(1,R]. 
\end{equation}
From \eqref{eq:v_R}, $\chi_R(r)$ solves the equation
$\big( r^{N+1}\big(\frac{\chi_R(r)}{r}\big)' \big)'=0$ in the interval
$(1,R]$, hence by integration we obtain that there exists $C_R\in\R$
such that 
\[
\frac{\chi_R(r)}{r} - \chi_R(1)=\frac{C_R}{N}
(1-r^{-N}),\quad\text{for every }r\in(1,R].
\]
Replacing $r=R$ in the above identity and observing that the boundary
condition in \eqref{eq:v_R} implies that 
\[
\frac{\chi_R(R)}R=\Upsilon_N,
\]
we obtain that 
\[
 \frac{C_R}{N} = \dfrac{1}{1-R^{-N}} \left( \Upsilon_N -
   \chi_R(1) \right).
\]
Hence 
\begin{equation*}
\chi_R(r) = r\,\frac{\Upsilon_N-\chi_R(1) R^{-N}}{1-R^{-N}}
- \frac{\Upsilon_N - \chi_R(1)}{1-R^{-N}}\,r^{1-N}, \quad r\in(1,R],
\end{equation*}
and then
\begin{equation}\label{eq:24}
 \chi_R'(r) = \frac{\Upsilon_N-\chi_R(1) R^{-N}}{1-R^{-N}}
+ \frac{(N-1)(\Upsilon_N - \chi_R(1))}{1-R^{-N}}\,r^{-N}, \quad r\in(1,R].
\end{equation}
The conclusion follows by plugging $r=R$ in \eqref{eq:21} and \eqref{eq:24}. 
\end{proof}

\subsection{Point-wise and energy control for eigenfunctions on the
  varying domain}\label{sec:point-wise-energy}
In order to prove Theorem \ref{teo_asintotico_autovalori}, quite
precise decaying estimates of eigenfunctions on the varying domain
are needed.  In this subsection we pursue this analysis.

\begin{Lemma}\label{controllo_supersol}
Let $j\in\N$ and, for all $\eps\in(0,1)$, let $v^\eps\in \mathcal D^{1,2}(\Omega^\eps)$ solve 
 \begin{equation}\label{eq:equagenereigenf}
\begin{cases}
-\Delta v_\eps=\lambda_j(\Omega^\eps) p v_\eps,&\text{in }\Omega^\eps,\\
v_\eps=0,&\text{on }\partial \Omega^\eps,\\
\int_{\Omega^\eps}pv_\eps^2\,dx=1.
\end{cases}
\end{equation}
\begin{enumerate}[\rm (i)]
\item For every sequence $\eps_n \to 0$ there exist a subsequence $\eps_{n_k}$ 
and $v_0\in \Di{D^-\cup D^+}$ solving 
 \begin{equation*}
\begin{cases}
-\Delta v_0=\lambda_j(D^+\cup D^-) p v_0,&\text{in }D^+\cup D^-,\\
v_0=0,&\text{on }\partial (D^+\cup D^-),\\
\int_{D^-\cup D^+} p v_0^2\, dx=1,
\end{cases}
\end{equation*}
 such that 
$ v_{\eps_{n_k}} \to v_0$ in $\Di{\R^N}$.
\item There exists $\eps_0\in(0,1)$ and $C_1,C_2>0$ such that, for all
  $\eps\in(0,\eps_0)$ and $R,R_1,R_2>1$ with $R_1>R_2$, there holds
\begin{align}
& \label{vjeps_unif_lim} |v_\eps(x)|\leq C_1,\quad \text{for all }x\in\Omega^\eps,\\
 & \label{vjeps_bucur} \lim_{\eps\to 0^+}\sup_{B_{R\eps}^+ \cup
   B_{R\eps}^-\cup \mathcal C_\eps} |v_{\eps}| = 0, \\
 & \label{vjeps_grad_bucur} \sup_{(B_{R_1\eps}^+ \setminus B_{R_2\eps}^+) 
\cup (B_{R_1\eps}^- \setminus B_{R_2\eps}^-)} 
\abs{\nabla v_{\eps}} = O(1/\eps), \quad \text{as }\eps\to 0^+, \\
& \label{vjeps_soprasol} |v_{\eps}(x)|\leq C_2 \Big(\sup_{\partial \mathcal C_{\eps}}|v_\eps|\Big) 
\left( e^{-\frac{\sqrt{\lambda_1(\Sigma)}}{4{\eps}} x_1} + e^{-\frac{\sqrt{\lambda_1(\Sigma)}}{4{\eps}} (1-x_1)}\right), 
\quad \text{for all }x\in\mathcal C_{\eps}.
\end{align}
\end{enumerate}
\end{Lemma}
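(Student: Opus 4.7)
The plan is to prove the two parts essentially in parallel, since the uniform estimates in (ii) feed back into the passage to the limit in (i). For part (i), I would first invoke the spectral continuity results of \cite{daners,bucur2006} to get $\lambda_j(\Omega^\eps)\to\lambda_j(D^-\cup D^+)$. Testing the equation with $v_\eps$ and using the normalization gives $\int_{\R^N}|\nabla v_\eps|^2\,dx=\lambda_j(\Omega^\eps)$, so $\{v_\eps\}$ is uniformly bounded in $\Di{\R^N}$ (after trivial extension). Extract a weakly convergent subsequence $v_{\eps_{n_k}}\weakly v_0$. Since $|\mathcal C_\eps|\to 0$ and $v_0\in\Di{\R^N}$ vanishes on the set $\{x_1\in[0,1]\}$, the trace properties force $v_0\in\Di{D^-\cup D^+}$. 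Passing to the limit in the weak formulation against $\varphi\in C^\infty_c(D^-\cup D^+)$ (which lies in $\Di{\Omega^\eps}$ for $\eps$ small) produces the limit equation; strong $L^2(p\,dx)$ convergence, obtained by combining Rellich on bounded sets with the tail estimate $\int_{\{|x|>R\}}p v_\eps^2\leq \|p\|_{L^{N/2}(\{|x|>R\})}\|v_\eps\|_{L^{2^*}}^2$, preserves the normalization. Strong $\Di{\R^N}$-convergence then follows from the energy identity $\|\nabla v_0\|_{L^2}^2=\lambda_j=\lim\|\nabla v_{\eps_{n_k}}\|_{L^2}^2$.

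For \eqref{vjeps_unif_lim}, I would apply a Brezis--Kato bootstrap, using $p\in L^{N/2}(\R^N)$ together with the uniform $\Di{\R^N}$-bound, followed by a Moser iteration; all the constants depend only on $\|p\|_{L^{N/2}}$, $\lambda_j(\Omega^\eps)$ and $\|v_\eps\|_{\Di{}}$, which are uniformly bounded in $\eps$. For \eqref{vjeps_bucur}, note that by \eqref{eq:p2} we have $p\equiv 0$ on $B_3^+$, so $v_\eps$ is harmonic in $B_3^+$; combining part (i) with standard Schauder estimates up to the flat part of $\partial D^+$ yields uniform $C^{1,\alpha}$-convergence $v_{\eps_{n_k}}\to v_0$ on $\overline{B_2^+}$. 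Since $v_0\in H^1_0(D^+)$ is continuous up to $\partial D^+$ and vanishes at $\e$, $\sup_{B_{R\eps}^+}|v_0|=O(\eps)\to 0$, which combined with the uniform convergence gives the claim on $B^+_{R\eps}$. The analogous statement on $B^-_{R\eps}$ uses a local Brezis--Kato argument since $p$ need not vanish there; the bound on $\mathcal C_\eps$ is then a direct consequence of \eqref{vjeps_soprasol}. The gradient bound \eqref{vjeps_grad_bucur} is immediate from interior gradient estimates for harmonic functions at the scale $\eps$, applied in the annulus where $p=0$: $|\nabla v_\eps|\leq (C/\eps)\|v_\eps\|_{L^\infty}=O(1/\eps)$.

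The heart of the lemma is the exponential decay \eqref{vjeps_soprasol}, which I would establish by explicit comparison. Choose $\Sigma'\supset\supset\Sigma$ so close to $\Sigma$ that $\lambda_1(\Sigma')>\lambda_1(\Sigma)/16$, and let $\tilde\psi$ be the positive first Dirichlet eigenfunction on $\Sigma'$, which satisfies $c_0:=\inf_{\Sigma}\tilde\psi>0$. Setting $\mu=\sqrt{\lambda_1(\Sigma)}/4$, define the candidate supersolution
\[
\bar\Psi_\eps(x_1,x')=\frac{\|\tilde\psi\|_\infty}{c_0}\,\Big(\sup_{\partial\mathcal C_\eps}|v_\eps|\Big)\,\frac{\tilde\psi(x'/\eps)}{\|\tilde\psi\|_\infty}\Big(e^{-\mu x_1/\eps}+e^{-\mu(1-x_1)/\eps}\Big).
\]
A direct computation gives $-\Delta\bar\Psi_\eps=\eps^{-2}(\lambda_1(\Sigma')-\mu^2)\bar\Psi_\eps$, and the factor $\eps^{-2}(\lambda_1(\Sigma')-\mu^2)$ dominates $\lambda_\eps \|p\|_{L^\infty}$ for $\eps$ small, so $\bar\Psi_\eps$ is a supersolution of $-\Delta u=\lambda_\eps p u$ throughout $\mathcal C_\eps$. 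By construction $\bar\Psi_\eps\geq |v_\eps|$ on $\partial\mathcal C_\eps$ (on the two cross-sections by the choice of the prefactor and $c_0$, on the lateral boundary trivially since $v_\eps=0$), and the maximum principle yields \eqref{vjeps_soprasol} with $C_2=\|\tilde\psi\|_\infty/c_0$.

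The main technical obstacle is in this last step: even though $p\equiv 0$ on the upper half of $\mathcal C_\eps$ by \eqref{eq:p2}, in the lower half one only has $p\in L^\infty$, and ensuring that the supersolution still absorbs the potential term $\lambda_\eps p \bar\Psi_\eps$ requires the $\eps^{-2}$ factor from the transverse eigenvalue to beat the bounded coefficient. This is automatic for small $\eps$ but must be quantified. A secondary subtlety is choosing the auxiliary cross-section $\Sigma'$ close enough to $\Sigma$ so that the exponential rate $\sqrt{\lambda_1(\Sigma)}/4$ is genuinely attained, which relies on continuous dependence of Dirichlet eigenvalues on the domain.
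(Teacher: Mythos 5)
Your proposal follows essentially the same route as the paper's proof: compactness plus spectral continuity for (i), a Brezis--Kato iteration for \eqref{vjeps_unif_lim}, locally uniform convergence to $v_0$ (which vanishes at the junction points) for \eqref{vjeps_bucur}, rescaled elliptic estimates for \eqref{vjeps_grad_bucur}, and an exponentially decaying supersolution built from a transverse Dirichlet eigenfunction for \eqref{vjeps_soprasol}. The only structural difference is cosmetic: the paper realizes the ``fattened section'' by the dilation $\psi_1^\Sigma(x'/(2\eps))$, which is admissible because $\tfrac12\overline\Sigma\subset\{|x'|\le 1/2\}\subset\Sigma$ by \eqref{eq:condsigma} and yields $-\Delta\Psi_\eps=\tfrac{3}{16}\lambda_1(\Sigma)\eps^{-2}\Psi_\eps$, whereas you take a slightly larger domain $\Sigma'$; the two devices are equivalent.

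Two of your justifications, as written, need repair. First, for \eqref{vjeps_grad_bucur} on the $D^-$ side you cannot invoke ``interior gradient estimates for harmonic functions in the annulus where $p=0$'': assumption \eqref{eq:p2} annihilates $p$ only on $B_3^+$ and on the upper half of the channel, not near $\mathbf 0$. The correct (and equally standard) argument, used on both sides in the paper, is that the rescaled function $x\mapsto v_\eps(\eps x)$ solves $-\Delta w=\eps^2\lambda_j(\Omega^\eps)\,p(\eps\cdot)\,w$ with uniformly bounded coefficient and uniformly bounded sup-norm by \eqref{vjeps_unif_lim}, so gradient estimates (up to the flat part of the boundary, where $w=0$) give $|\nabla w|=O(1)$ and hence $|\nabla v_\eps|=O(1/\eps)$. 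Second, the concluding appeal to ``the maximum principle'' is not automatic: one first needs Kato's inequality to make $|v_\eps|$ a subsolution, and then the difference $\bar\Psi_\eps-|v_\eps|$ satisfies $-\Delta w-cw\ge 0$ with $c>0$, an operator for which the maximum principle fails on general domains. It does hold here because the channel is thin: the paper tests with $(\bar\Psi_\eps-|v_\eps|)^-\in H^1_0(\mathcal C_\eps)$ and uses H\"older and Sobolev together with $|\mathcal C_\eps|=O(\eps^{N-1})$ (equivalently, $c<\lambda_1(\mathcal C_\eps)\ge\lambda_1(\Sigma)/\eps^2$ gives coercivity). So the observation you flag --- that the $\eps^{-2}$ transverse eigenvalue must beat the bounded potential --- is needed twice: once to make $\bar\Psi_\eps$ a supersolution, and once to validate the comparison principle itself.
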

\begin{pf}
From the spectral continuity analyzed in \cite{daners}, 
$\lambda_j(\Omega^\eps)\to \lambda_j(D^+\cup D^-)$; hence the proof of (i)
follows easily from classical compactness argument in view of the
compactness of the map 
 $\Di{\R^N}\to (\Di{\R^N})^\star$, $u\mapsto pu$. 
Estimate \eqref{vjeps_unif_lim} follows by an iterative Brezis-Kato type
argument (see e.g. \cite[Lemma 2.2]{FT12}). 

From \cite[Lemma 5.2]{bucur2006} it follows that solutions to 
\eqref{eq:equagenereigenf} converging in $\Di{\R^N}$ actually converge
in $L^{\infty}_{\rm loc}(\R^N)$; then \cite{bucur2006} and part (i)
imply that for
every sequence $\eps_n \to 0^+$ there exist a subsequence $\eps_{n_k}$ 
 such that $v_{\eps_{n_k}} \to v_0$ in $L^{\infty}_{\rm
   loc}(\R^N)$ and hence, for every $R>1$, 
\[
\lim_{k\to+\infty}\sup_{B_{R\eps_{n_k}}^+ \cup B_{R\eps_{n_k}}^-\cup \mathcal C_{\eps_{n_k}}}
|v_{\eps_{n_k}}| = 0 .
\]
Since the  above
limit  depends neither on the sequence nor on the subsequence, we
deduce the limit as $\eps\to0^+$ thus proving \eqref{vjeps_bucur}.

Estimate \eqref{vjeps_unif_lim}, together with classical elliptic
estimates for $x\mapsto v_\eps (\eps x)$
over an annulus 
$B_{R_1}^- \setminus B_{R_2}^-$ 
and $x\mapsto  v_\eps (\mathbf e_1 +\eps (x-\mathbf e_1)$ over
$B_{R_1\eps}^+ \setminus B_{R_2\eps}^+$,
yield \eqref{vjeps_grad_bucur}.

To prove estimate \eqref{vjeps_soprasol}, let us consider the function
\begin{equation}\label{Psi_eps}
\Psi_{\eps}(x_1,x') = C_{\eps} 
\left( e^{-\frac{\sqrt{\lambda_1(\Sigma)}}{4{\eps}} x_1} + e^{-\frac{\sqrt{\lambda_1(\Sigma)}}{4{\eps}} (1-x_1)}\right) 
\psi_1^\Sigma\Big(\frac{x'}{2{\eps}}\Big)
\end{equation}
where
\begin{equation}\label{Ceps}
 C_{\eps} = \Big( \min_{|y'|\leq 1/2} \psi_1^\Sigma(y') \Big)^{-1} 
\sup_{\partial \mathcal C_{\eps}}\abs{v_{\eps}}.
\end{equation}
We note that $C_{\eps} = o(1)$ as ${\eps}\to 0^+$ in view of estimate \eqref{vjeps_bucur}.
For all $x'\in \eps\Sigma$,
\begin{equation}\label{boundary_cond1} 
\Psi_{\eps}(0,x') \geq C_\eps \psi_{1}^\Sigma \Big(\frac{x'}{2{\eps}}\Big) 
\geq \sup_{\partial \mathcal C_{\eps}}\abs{v_{\eps}}, \quad
 \Psi_{\eps}(1,x') \geq C_\eps \psi_{1}^\Sigma \Big (\frac{x'}{2{\eps}}\Big) 
\geq \sup_{\partial \mathcal C_{\eps}}\abs{v_{\eps}},
\end{equation}
so that $\Psi_\eps\geq|v_\eps|$ on $\partial \mathcal C_\eps$.
Moreover 
\[
-\Delta \Psi_{\eps} =
\frac{3}{16}\frac{\lambda_1(\Sigma)}{{\eps}^2}
\Psi_{\eps},\quad\text{in }\mathcal C_\eps,
\] 
whereas, via Kato's inequality \cite{kato}, 
\[
-\Delta |v_{\eps}| \leq \lambda_j(\Omega^{\eps}) p |v_{\eps}|,\quad\text{in }\mathcal C_\eps,
\] 
so that there exists a constant $c>0$ independent of $\eps$ such that,
for $\eps$ sufficiently small, $\Psi_{\eps} -\abs{v_{\eps}}$ weakly solves 
\begin{equation}\label{eq:46}
 -\Delta (\Psi_{\eps} -\abs{v_{\eps}}) - c (\Psi_{\eps}
 -\abs{v_{\eps}}) \geq 0,\quad\text{in }\mathcal C_\eps. 
\end{equation}
The boundary conditions \eqref{boundary_cond1} imply that $(\Psi_{\eps} -\abs{v_{\eps}})^- =\max\{0,-(\Psi_{\eps} -\abs{v_{\eps}})\}
\in H_0^1(\mathcal C_{\eps})$; hence testing \eqref{eq:46} with 
$-(\Psi_{\eps} -\abs{v_{\eps}})^-$ and using H\"older and Sobolev
inequalities, we obtain that
\begin{align*}
  \int_{\mathcal C_{\eps}} \abs{\nabla(\Psi_{\eps}
    -\abs{v_{\eps}})^-}^2 dx \leq c \int_{\mathcal
    C_{\eps}}\abs{(\Psi_{\eps} -\abs{v_{\eps}})^-}^2
  dx\\
  \leq c \left(\int_{\mathcal C_{\eps}}\abs{(\Psi_{\eps}
      -\abs{v_{\eps}})^-}^{2^*}dx
  \right)^{\!\!2/2^*}\big(\eps^{N-1}|\Sigma|\big)^{2/N}\\
  \leq c \,|\Sigma|^{2/N}\eps^{\frac{2(N-1)}{N}} S^{-1} \int_{\mathcal C_{\eps}} \abs{\nabla (\Psi_{\eps}
    -\abs{v_{\eps}})^-}^2dx 
\end{align*}
where $S$ denotes   the best constant in the Sobolev
inequality
$ S\|u\|_{L^{2^*}(\R^N)}^2\leq \|u\|_{{\mathcal D}^{1,2}(\R^N)}^2$ and 
$|\Sigma|$  is the Lebesgue $(N-1)$-dimensional measure of
$\Sigma$. If $(\Psi_{\eps} -\abs{v_{\eps}})^-\not\equiv
0$, the above estimate would imply that $1\leq c
\,|\Sigma|^{2/N}\eps^{\frac{2(N-1)}{N}} S^{-1}$ for $\eps$ small, thus
giving rise to a contradiction. Then $(\Psi_{\eps}
-\abs{v_{\eps}})^-\equiv 0$ and $|v_\eps|\leq \Psi_{\eps}$ in
$\mathcal C_\eps$, which implies estimate \eqref{vjeps_soprasol}.
\end{pf}

\begin{Corollary}\label{stime_integrali_vjeps}
 Let $v_\eps$ be as in Lemma \ref{controllo_supersol}.
Then for every $\delta \in (0,1/2)$ and  $R>1$ 
\begin{align}
  & \label{stima_puntuale_canale_vjeps} \abs{v_\eps(x_1,x')} =
  O(e^{-\frac{\sqrt{\lambda_1(\Sigma)}}{4\eps}\delta}), \quad \text{as
    $\eps\to 0^+$ uniformly in }(\delta,1-\delta)\times(\eps \Sigma),
  \\ & \label{stima_int_mezzapalletta_vjeps} \int_{B_{R{\eps}}^-} p
  v_{\eps}^2 dx= o({\eps}^N), \quad \text{as $\eps\to
    0^+$}\\ & \label{stima_int_canale_vjeps} \int_{\mathcal C_{\eps}}
  p v_{\eps}^2 dx= o({\eps}^N), \quad \text{as $\eps\to 0^+$}.
\end{align}
\end{Corollary}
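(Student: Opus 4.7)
The three estimates all follow by combining the pointwise/uniform bounds of Lemma~\ref{controllo_supersol} with the hypothesis \eqref{eq:p2} on the support of~$p$.

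\textbf{Plan for \eqref{stima_puntuale_canale_vjeps}.} This is an immediate consequence of the super-solution bound \eqref{vjeps_soprasol}. For every $(x_1,x')\in(\delta,1-\delta)\times(\eps\Sigma)$ both exponentials satisfy
\begin{equation*}
e^{-\frac{\sqrt{\lambda_1(\Sigma)}}{4\eps}x_1}+e^{-\frac{\sqrt{\lambda_1(\Sigma)}}{4\eps}(1-x_1)}\leq 2\,e^{-\frac{\sqrt{\lambda_1(\Sigma)}}{4\eps}\delta},
\end{equation*}
and $\sup_{\partial\mathcal C_\eps}|v_\eps|=o(1)$ by \eqref{vjeps_bucur} (applied with any fixed $R>1$ large enough to cover $\partial\mathcal C_\eps$ through $B_{R\eps}^\pm$), so in particular it is bounded.

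\textbf{Plan for \eqref{stima_int_mezzapalletta_vjeps}.} I will simply use the boundedness of $p$ together with \eqref{vjeps_bucur}: since $|B_{R\eps}^-|=O(\eps^N)$ and $\sup_{B_{R\eps}^-}|v_\eps|=o(1)$, one gets
\begin{equation*}
\int_{B_{R\eps}^-}p\,v_\eps^2\,dx\leq \|p\|_{L^\infty(\R^N)}\,\bigl(\sup_{B_{R\eps}^-}|v_\eps|\bigr)^{2}\,|B_{R\eps}^-|=o(1)\cdot O(\eps^N)=o(\eps^N).
\end{equation*}

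\textbf{Plan for \eqref{stima_int_canale_vjeps}.} I split $\mathcal C_\eps=\bigl([0,\tfrac12)\cup[\tfrac12,1]\bigr)\times(\eps\Sigma)$. On the second piece, by \eqref{eq:condsigma} we have $\eps\Sigma\subset\Sigma$ for $\eps<1$, so the assumption $p\equiv 0$ on $[\tfrac12,1]\times\Sigma$ from \eqref{eq:p2} gives no contribution. On the first piece I insert \eqref{vjeps_soprasol}, use $(a+b)^2\leq 2(a^2+b^2)$, pull $\|p\|_{L^\infty}$ out, and integrate:
\begin{equation*}
\int_0^{1/2}\!\!\int_{\eps\Sigma}e^{-\frac{\sqrt{\lambda_1(\Sigma)}}{2\eps}x_1}\,dx'\,dx_1\leq |\Sigma|\,\eps^{N-1}\cdot\frac{2\eps}{\sqrt{\lambda_1(\Sigma)}}=O(\eps^N),
\end{equation*}
while the symmetric term with $(1-x_1)$ in the exponent is exponentially small. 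Since the prefactor is $\bigl(\sup_{\partial\mathcal C_\eps}|v_\eps|\bigr)^{2}=o(1)$ by \eqref{vjeps_bucur}, the whole integral is $o(1)\cdot O(\eps^N)=o(\eps^N)$.

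There is no real obstacle: once one notices that \eqref{eq:p2} kills the ``deep'' half of the channel and that the super-solution estimate \eqref{vjeps_soprasol} provides an exponential decay with rate $\eps^{-1}$ along $x_1$, each of the three bounds reduces to a one-dimensional exponential integral. The only mildly delicate point is checking that, for $\eps$ small, $\eps\Sigma\subset\Sigma$, so that the region $[\tfrac12,1]\times(\eps\Sigma)$ is indeed contained in the zero set of $p$ given by \eqref{eq:p2}; this is ensured by \eqref{eq:condsigma}.
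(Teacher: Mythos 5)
Your proof is correct and follows essentially the same route as the paper: the pointwise and channel estimates come from the super-solution bound \eqref{vjeps_soprasol} combined with $\sup_{\partial\mathcal C_\eps}|v_\eps|=o(1)$ from \eqref{vjeps_bucur}, and the half-ball estimate from \eqref{vjeps_bucur} plus $|B_{R\eps}^-|=O(\eps^N)$. The only (harmless) difference is that for \eqref{stima_int_canale_vjeps} the paper does not split the channel or invoke the vanishing of $p$ on $[\tfrac12,1]\times\Sigma$; it just integrates the squared sum of exponentials over all of $[0,1]$, which already gives $O(\eps)$, so your extra step is superfluous but not wrong.
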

\begin{pf}
  Estimate \eqref{stima_puntuale_canale_vjeps} is a a straightforward
  consequence of  \eqref{vjeps_soprasol}, whereas 
\eqref{stima_int_mezzapalletta_vjeps} follows directly from \eqref{vjeps_bucur}.
In order to prove \eqref{stima_int_canale_vjeps}, we observe that
estimates \eqref{vjeps_soprasol} and \eqref{vjeps_bucur} imply that
\begin{align*}
 \int_{\mathcal C_{\eps}} p v_{\eps}^2 dx
&\leq {\rm const\,} \Big(\sup_{\partial \mathcal C_{\eps}}|v_\eps|\Big)^{\!2}\eps^{N-1} \int_0^1 
\left( e^{-\frac{\sqrt{\lambda_1(\Sigma)}}{4{\eps}} x_1} + e^{-\frac{\sqrt{\lambda_1(\Sigma)}}{4{\eps}} (1-x_1)}\right)^{\!2} dx_1 \\
&\leq {\rm const\,} \Big(\sup_{\partial \mathcal C_{\eps}}|v_\eps|\Big)^{\!2}\eps^{N-1}\ {\eps} 
= o({\eps}^N)
\end{align*}
 as ${\eps}\to0^+$. 
\end{pf}

\noindent For $r\in (0,+\infty)\setminus
(1,1+\eps) $, 
we define
\begin{align*}
\Omega_r^\eps&=
\begin{cases}
D^-\cup\{(x_1,x')\in \mathcal C_\eps:x_1<r\},&\text{if }0< r\leq1,\\
D^-\cup\mathcal C_\eps\cup B^+_{r-1},&\text{if } r\geq \eps+1.
\end{cases}
\end{align*}
We recall the following result stated in \cite{FT12}.
\begin{Lemma}\label{lemma_zero}
Let $u_\eps$ be as in \eqref{problema} under the assumptions \eqref{convergenza_u_0}, \eqref{eq:52},
\eqref{eq:53}, \eqref{eq:54}, \eqref{eq:u0}, \eqref{eq:13}.
Then
for every $f\in L^{N/2}(\R^N)$ and $M>0$, there exists
 $\bar\eps_{M,f}>0$ such that for all $r\in(0,1)$ and $\eps\in
  (0,\bar\eps_{M,f})$
$$
\int_{\Omega_{r}^\eps}
|\nabla u_\eps(x)|^2dx\geq M \int_{\Omega_{r}^\eps}
|f(x)| u_\eps^2(x)dx.
$$
\end{Lemma}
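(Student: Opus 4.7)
My plan is to combine H\"older's inequality and a \emph{localized} Sobolev-type estimate on $\Omega_r^\eps$ with a density argument for $f\in L^{N/2}(\R^N)$, exploiting the strong convergence $u_\eps\to u_0$ in $\Di{\R^N}$ (with $u_0$ trivially extended to zero on $\R^N\setminus\overline{D^+}$) and the exponential decay of $u_\eps$ in the channel provided by Lemma~\ref{controllo_supersol}.

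Given $M>0$ and $f\in L^{N/2}(\R^N)$, I would decompose $f=f_1+f_2$, with $f_1\in L^\infty(\R^N)$ compactly supported and $\|f_2\|_{L^{N/2}(\R^N)}<\delta$ for a $\delta>0$ to be chosen. For $f_2$, I would apply H\"older on $\Omega_r^\eps$ together with a localized Sobolev inequality, obtained by extending $u_\eps|_{\Omega_r^\eps}$ via a smooth cutoff $\chi$ vanishing on $\{x_1\geq r+\delta_\eps\}\cap\mathcal{C}_\eps$ with $\delta_\eps\sim\eps$: the correction from $\nabla\chi$ is controlled via the Poincar\'e inequality on the thin cross-section $\eps\Sigma$ (whose constant is of order $\eps/\sqrt{\lambda_1(\Sigma)}$), yielding
\[
\|u_\eps\|_{L^{2^*}(\Omega_r^\eps)}^2\leq C\int_{\Omega_{r+\delta_\eps}^\eps}|\nabla u_\eps|^2\,dx \leq C(1+o(1))\int_{\Omega_r^\eps}|\nabla u_\eps|^2\,dx,
\]
the second inequality using \eqref{vjeps_soprasol}. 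Hence $\int_{\Omega_r^\eps}|f_2|u_\eps^2\,dx\leq C\delta\int_{\Omega_r^\eps}|\nabla u_\eps|^2\,dx$. For $f_1$, since $K:=\mathrm{supp}(f_1)$ is compact and $u_\eps\to 0$ uniformly on $K\cap D^-$ by the $L^\infty_{\mathrm{loc}}$-convergence recalled from \cite[\S 5.2]{bucur2006}, while $u_\eps$ is super-exponentially small on the part of $K$ lying in $\mathcal{C}_\eps$ bounded away from the channel endpoints (again by \eqref{vjeps_soprasol}), I obtain $\int_{\Omega_r^\eps}|f_1|u_\eps^2\,dx = o\bigl(\|u_\eps\|_{L^{2^*}(\Omega_r^\eps)}^2\bigr)=o\bigl(\int_{\Omega_r^\eps}|\nabla u_\eps|^2\,dx\bigr)$ by the localized Sobolev estimate above. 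Choosing $\delta=1/(2CM)$ and then $\eps$ small concludes the argument.

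The main obstacle is the uniformity of the localized Sobolev inequality on the varying domain $\Omega_r^\eps$ as $r$ ranges over $(0,1)$ and $\eps\to 0^+$. The regimes $r\to 0$, $r\to r_\star\in(0,1)$, and $r\to 1^-$ require separate treatment: in the last one the cutoff at $\{x_1=r+\delta_\eps\}$ must remain inside the channel, that is $r+\delta_\eps<1$, which constrains $\delta_\eps<1-r$; when $1-r\ll\eps$ the Poincar\'e correction $\eps^2/\delta_\eps^2$ blows up, and in that regime one must instead control the $L^2$-trace of $u_\eps$ across $\{x_1=r+\delta_\eps\}\cap\mathcal{C}_\eps$ directly via \eqref{vjeps_soprasol}, bypassing the Poincar\'e bound on the vanishing slab.
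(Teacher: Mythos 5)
This lemma is not proved in the paper: it is quoted from \cite{FT12}, so the comparison below is with the argument that such a statement actually requires. Your treatment of the small tail $f_2$ (H\"older plus a localized Sobolev inequality, with the cutoff error absorbed via the Poincar\'e inequality on the cross-section $\eps\Sigma$) is the right mechanism for that piece, and you correctly flag the uniformity issues in $r$. The genuine gap is in the treatment of $f_1$. You claim $\int_{\Omega_r^\eps}|f_1|u_\eps^2\,dx=o\big(\|u_\eps\|^2_{L^{2^*}(\Omega_r^\eps)}\big)$ from the uniform convergence $u_\eps\to0$ on compact subsets of $D^-$. This does not follow: H\"older only gives $\int_{K\cap D^-}u_\eps^2\,dx\le |K|^{2/N}\|u_\eps\|^2_{L^{2^*}(\Omega_r^\eps)}$, a fixed constant times the right-hand side, and there is no lower bound on $\|u_\eps\|_{L^{2^*}(\Omega_r^\eps)}$ (equivalently on $\int_{\Omega_r^\eps}|\nabla u_\eps|^2dx$) that would let uniform smallness of $u_\eps$ on $K$ beat it. Both sides of the asserted relation vanish as $\eps\to0^+$, and the lemma is precisely a statement about their ratio; if, say, $u_\eps|_{D^-}=c_\eps v+o(c_\eps)$ for a fixed $v\in\Di{D^-}$ and some $c_\eps\to0$, the ratio is a nonzero constant and your chain of estimates breaks.

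The tell is that your argument never uses assumption \eqref{eq:54}, i.e. $\lambda_0\notin\sigma_p(D^-)$, whereas the lemma is false without it. Taking $r\to0^+$ the inequality says that $\int_{D^-}|f|\,\hat u_\eps^2\,dx\to0$ for every $f\in L^{N/2}$, where $\hat u_\eps=u_\eps/\|u_\eps\|_{\Di{D^-}}$; since $\|\hat u_\eps\|_{\Di{D^-}}=1$ and $v\mapsto\int|f|v^2$ is weakly continuous on $\Di{D^-}$, this is equivalent to saying that every weak limit of the renormalized tails $\hat u_\eps$ vanishes. Such a weak limit solves $-\Delta w=\lambda_0\,p\,w$ in $\Di{D^-}$ (the shrinking aperture $\{0\}\times\eps\Sigma$ has vanishing capacity for $N\ge3$), so it is zero exactly because $\lambda_0$ is not an eigenvalue on $D^-$; in the resonant case the renormalized tail converges to the eigenfunction $u_0^-$ and the conclusion fails for suitable $f$. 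So the missing idea is this renormalization/contradiction step on $D^-$, which is where \eqref{eq:54} must enter; the rest of your scheme (the $f=f_1+f_2$ splitting, the cutoff in the channel, the cross-section Poincar\'e bound, and \eqref{vjeps_soprasol} for the part of the channel away from the junctions) can then be kept.
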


\begin{Corollary}\label{motivazione_O_grande}
Let $u_\eps$ be as in Lemma \ref{lemma_zero}.
Then, 
for every $\delta\in (0,1/4)$
\[
\int_{\Omega^\eps_{1-2\delta} } \abs{\nabla u_\eps}^2 dx
= O\Big(\eps^{N-1}e^{-\frac{\sqrt{\lambda_1(\Sigma)}}{2\eps}
  \delta}\Big)\quad\text{as }\eps\to0^+. 
\]
\end{Corollary}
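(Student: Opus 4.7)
The plan is to integrate the equation \eqref{problema} by parts over $\Omega^\eps_{1-2\delta}$, absorb the weighted $L^2$ term into the gradient term via Lemma \ref{lemma_zero}, and then estimate the resulting surface flux on the cross--section $\Gamma:=\{1-2\delta\}\times(\eps\Sigma)$ by combining the exponential pointwise decay \eqref{vjeps_soprasol} with an elliptic estimate for $\nabla u_\eps$. Since $u_\eps=0$ on $\partial\Omega^\eps$, testing $-\Delta u_\eps=\lambda_\eps p u_\eps$ against $u_\eps$ on $\Omega^\eps_{1-2\delta}$ produces
\[
\int_{\Omega^\eps_{1-2\delta}}|\nabla u_\eps|^2\,dx = \lambda_\eps \int_{\Omega^\eps_{1-2\delta}} p\, u_\eps^2\,dx + \int_\Gamma u_\eps\,\frac{\partial u_\eps}{\partial x_1}\,dx'.
\]
Applying Lemma \ref{lemma_zero} with $f=p$ and $M=2\lambda_0$ (admissible since $r=1-2\delta\in(1/2,1)\subset(0,1)$ and $\lambda_\eps\to\lambda_0$ by \eqref{eq:52}) bounds $\lambda_\eps \int p u_\eps^2$ by $\tfrac{1}{2}\int|\nabla u_\eps|^2$ for $\eps$ small enough, giving
\[
\int_{\Omega^\eps_{1-2\delta}}|\nabla u_\eps|^2\,dx \leq 2\,\abs{\int_\Gamma u_\eps\,\frac{\partial u_\eps}{\partial x_1}\,dx'}.
\]

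To estimate this boundary flux, I first observe that, since $\delta<1/4$, the cylindrical neighborhood $\mathcal N_\eps:=(1-2\delta-\eps,\,1-2\delta+\eps)\times(\eps\Sigma)$ lies, for $\eps$ sufficiently small, inside $\{\tfrac12\le x_1\le 1,\,x'\in\eps\Sigma\}\subset\mathcal C_\eps$, so by \eqref{eq:p2} we have $p\equiv 0$ on $\mathcal N_\eps$ and $u_\eps$ is therefore harmonic there with vanishing Dirichlet data on the lateral walls. Rescaling via $w(y_1,y'):=u_\eps\bigl((1-2\delta)+\eps y_1,\,\eps y'\bigr)$ yields a harmonic function on $(-1,1)\times\Sigma$ vanishing on $(-1,1)\times\partial\Sigma$; standard elliptic estimates (up to the $C^{2,\alpha}$--boundary $\partial\Sigma$) give $\abs{\nabla_y w(0,y')}\le C\|w\|_{L^\infty}$ uniformly for $y'\in\overline\Sigma$, and scaling back,
\[
\sup_\Gamma\abs{\nabla u_\eps} \leq C\,\eps^{-1}\sup_{\mathcal N_\eps}\abs{u_\eps}.
\]
On the other hand, since $1-x_1\geq 2\delta-\eps$ on $\mathcal N_\eps$, the estimate \eqref{vjeps_soprasol} combined with the uniform bound \eqref{vjeps_unif_lim} on $\sup_{\partial\mathcal C_\eps}\abs{u_\eps}$ implies
\[
\sup_{\mathcal N_\eps}\abs{u_\eps} = O\!\left(e^{-\frac{\sqrt{\lambda_1(\Sigma)}}{2\eps}\delta}\right) \quad\text{as }\eps\to 0^+.
\]

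Combining these two estimates with $\abs{\Gamma}=\eps^{N-1}\abs{\Sigma}$, I obtain
\[
\abs{\int_\Gamma u_\eps\,\frac{\partial u_\eps}{\partial x_1}\,dx'} \leq \abs{\Gamma}\,\sup_\Gamma\abs{u_\eps}\,\sup_\Gamma\abs{\nabla u_\eps} = O\!\left(\eps^{N-2}e^{-\frac{\sqrt{\lambda_1(\Sigma)}}{\eps}\delta}\right),
\]
which is $O\bigl(\eps^{N-1}e^{-\sqrt{\lambda_1(\Sigma)}\delta/(2\eps)}\bigr)$ because $\eps^{-1}e^{-\sqrt{\lambda_1(\Sigma)}\delta/(2\eps)}\to0$ as $\eps\to 0^+$, completing the proof. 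The main technical point is the gradient bound on $\Gamma$: the thinness of the channel forces the $\eps$--rescaling of the transverse variables, costing a factor $\eps^{-1}$, but this is easily absorbed by the exponential decay provided by \eqref{vjeps_soprasol}, so the only real check is that $r=1-2\delta$ lies in the admissible range of Lemma \ref{lemma_zero} and that $\mathcal N_\eps$ sits in the $p$-free portion of the channel, both of which follow directly from $\delta\in(0,1/4)$ and \eqref{eq:p2}.
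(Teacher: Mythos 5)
Your argument is correct, and it handles the localization differently from the paper. The paper multiplies $u_\eps$ by a smooth cut-off $\eta$ supported in $\Omega^\eps_{1-\delta}$ and compares the quadratic forms of $u_\eps$ and $\eta u_\eps$: after integrating by parts, the only error terms are \emph{volume} integrals of $u_\eps^2$ against derivatives of $\eta$ over the transition slab $\Omega^\eps_{1-\delta}\setminus\Omega^\eps_{1-2\delta}$, which are controlled purely by the pointwise decay \eqref{stima_puntuale_canale_vjeps}; no gradient information is needed, and Lemma \ref{lemma_zero} then absorbs the $p$-term exactly as you do. You instead cut sharply at $x_1=1-2\delta$ and pay for it with a \emph{surface} flux $\int_\Gamma u_\eps\,\partial_{x_1}u_\eps$, which forces you to prove the extra scaled gradient bound $\sup_\Gamma\abs{\nabla u_\eps}\le C\eps^{-1}\sup_{\mathcal N_\eps}\abs{u_\eps}$ via harmonicity of $u_\eps$ on the $p$-free slab (guaranteed by \eqref{eq:p2} and $\delta<1/4$) and boundary elliptic estimates on the rescaled cylinder. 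That step is standard and your verification of the hypotheses ($1-2\delta\in(1/2,1)$ for Lemma \ref{lemma_zero}, $\mathcal N_\eps\subset\{\tfrac12\le x_1\le1\}\times\eps\Sigma$ for small $\eps$) is sound; the resulting bound $O(\eps^{N-2}e^{-\sqrt{\lambda_1(\Sigma)}\delta/\eps})$ is in fact stronger than required and implies the stated estimate. In short: the cut-off trick buys the paper a proof that never touches $\nabla u_\eps$ on the cross-section, while your sharp truncation is more direct at the price of one additional (routine) rescaled elliptic estimate.
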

\begin{pf}
 Let $\eta$ be a smooth cut-off function such that $\eta\equiv 1$ in
 $\Omega^\eps_{1-2\delta}$, $\eta\equiv 0$ in
 $\Omega^\eps\setminus \Omega^\eps_{1-\delta}$, $0\leq\eta\leq1$, and
 $|\nabla\eta|\leq 2\delta$. Let
 $w_\eps= \eta u_\eps$. Then, taking into account \eqref{eq:p2},
\begin{multline*}
 \int_{\Omega^\eps_{1-2\delta} } \big(\abs{\nabla u_\eps}^2 -\lambda_{\eps} pu_\eps^2\big)\,dx
\leq \int_{\Omega^\eps_{1-\delta}}\big( \abs{\nabla w_\eps}^2 -\lambda_{\eps} pw_\eps^2\big)\,dx\\
=-\int_{
\Omega^\eps_{1-\delta}\setminus \Omega^\eps_{1-2\delta}}
\eta(\Delta \eta) u_\eps^2\,dx+\frac12
\int_{
\Omega^\eps_{1-\delta}\setminus \Omega^\eps_{1-2\delta}}
u_\eps^2\Delta(\eta^2)\,dx
\end{multline*}
from which the thesis follows invoking estimate
\eqref{stima_puntuale_canale_vjeps}  of Corollary \ref{stime_integrali_vjeps} 
 and Lemma \ref{lemma_zero}.
\end{pf}

\subsection{Point-wise estimates and blow-up analysis of  the test functions}

For every $\eps\in(0,\eps_0)$ and $R>1$, let $\bar u_{\eps,R}$ be the
unique solution to the minimization problem
\begin{equation}\label{eq:min2}
\int_{\Omega^\eps_{1+R\eps}}|\nabla \bar u_{\eps,R}|^2\,dx 
=\min\bigg\{
\int_{\Omega^\eps_{1+R\eps}}|\nabla v|^2\,dx:
v\in  \mathcal H_R^\eps\text{ and }v=u_0\text{ on }\Gamma^+_{R\eps}\bigg\},
\end{equation}
where
$\mathcal H_R^\eps$ is the completion of
$\{v\in C^\infty_{\rm c}\big(\,\overline{\Omega^\eps_{1+R\eps} }\,\big):
v=0\text{ on }\partial\Omega^\eps\}$ with respect to the norm
$\big(\int_{\Omega^\eps_{1+R\eps} }|\nabla v|^2dx\big)^{1/2}$; in
particular $\bar u_{\eps,R}$ solves
\begin{equation}\label{eq:1}
 \begin{cases}
  -\Delta \bar u_{\eps,R} =0, &\text{ in } \Omega^\eps_{1+R\eps},\\
  \bar u_{\eps,R}= u_0, &\text{ on }\Gamma_{R\eps}^+,\\
  \bar u_{\eps,R} = 0, &\text{ on }\partial\Omega^\eps.
 \end{cases}
\end{equation}
In a similar way, 
for every 
$\eps\in(0,\eps_0)$ and $R>1$, we denote as $\bar v_{\eps,R}$  the
unique solution to the minimization problem
\begin{equation}\label{eq:min1}
\int_{B_{R\eps}^+}\!\!|\nabla \bar v_{\eps,R}|^2dx 
=\min\bigg\{
\int_{B_{R\eps}^+}\!\!|\nabla v|^2dx:
v\in  H^1(B_{R\eps}^+),\, 
v=0\text{ on }\partial D^+,\text{ and }v= u_\eps\text{ on }\Gamma^+_{R\eps}\bigg\}.
\end{equation}
In
particular $\bar v_{\eps,R}$ solves
\begin{equation*}
 \begin{cases}
  -\Delta \bar v_{\eps,R} =0, &\text{ in } B_{R\eps}^+,\\
  \bar v_{\eps,R}=u_\eps, &\text{ on }\Gamma_{R\eps}^+,\\
  \bar v_{\eps,R} = 0, &\text{ on }\partial D^+.
 \end{cases}
\end{equation*}
Hence, for every $\eps\in(0,\eps_0)$ and $R>1$, we
define  $\widetilde u_{\eps,R}\in \mathcal D^{1,2}(\Omega^\eps)$ as 
\begin{equation}\label{u_0_entra_nel_canale}
 \widetilde u_{\eps,R}:= 
\begin{cases}
  u_0, &\text{ in $D^+ \setminus B_{R\eps}^+$},\\
  \bar u_{\eps,R}, &\text{ in }\Omega^\eps_{1+R\eps},
 \end{cases}
\end{equation}
and $\widehat {u}_{\eps,R}\in \mathcal D^{1,2}(D^+)$ as
\begin{equation}\label{v_j^eps_zero_al_giunto}
 \widehat {u}_{\eps,R}:= 
\begin{cases}
  u_\eps, &\text{ in $D^+ \setminus B_{R\eps}^+$}\\
  \bar v_{\eps,R},&\text{ in $B_{R\eps}^+$}.
 \end{cases}
\end{equation}
\begin{Lemma}\label{l:stimaeste}
Let $R>1$ and $\widetilde u_{\eps,R}$ be as in
\eqref{u_0_entra_nel_canale}. Then,
 for every $\delta \in(0,1/4)$,
\begin{equation}\label{eq:4}
|\widetilde u_{\eps,R} (x)|=O\Big(e^{-\frac{\sqrt{\lambda_1(\Sigma)}}{4\eps}\delta}\Big)
\end{equation}
as $\eps\to0^+$ uniformly in $\big\{ (x_1,x')\in \mathcal C_\eps: x_1 \in (\delta,1-\delta) \big\}$.
Moreover 
\begin{equation}\label{eq:5}
  \int_{\Omega^\eps_{1-2\delta} } |\nabla \widetilde u_{\eps,R}|^2dx=
 O\Big(\eps^{N-1}e^{-\frac{\sqrt{\lambda_1(\Sigma)}}{2\eps}\delta}\Big)\quad\text{as }\eps\to0^+. 
\end{equation}
\end{Lemma}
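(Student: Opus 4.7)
The plan is to adapt the supersolution/comparison argument from the proof of \eqref{vjeps_soprasol} in Lemma \ref{controllo_supersol} so as to exploit the fact that the Dirichlet datum of $\bar u_{\eps,R}$ on $\Gamma_{R\eps}^+$ is of size $O(\eps)$; this sharper control of the boundary datum is what propagates into the exponentially small interior bound \eqref{eq:4}. Once \eqref{eq:4} is in hand, a standard Caccioppoli cut-off argument reduces \eqref{eq:5} to a pointwise integral estimate across the transition strip inside the tube.

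First I would observe that, by elliptic regularity up to the boundary together with the normalization \eqref{eq:13}, $u_0\in C^1(\overline{D^+})$ and $u_0(\mathbf e_1)=0$, so a Taylor expansion gives $\sup_{\Gamma_{R\eps}^+}|u_0|\le C_R\eps$. Since $\bar u_{\eps,R}$ is harmonic in $\Omega^\eps_{1+R\eps}$, vanishes on $\partial\Omega^\eps$, and equals $u_0$ on $\Gamma_{R\eps}^+$, the weak maximum principle yields $\sup_{\Omega^\eps_{1+R\eps}}|\bar u_{\eps,R}|\le C_R\eps$. In particular, on the two cross-sections $\{x_1=0\}\cap\mathcal C_\eps$ and $\{x_1=1\}\cap\mathcal C_\eps$ we have $|\bar u_{\eps,R}|\le C_R\eps$, while $\bar u_{\eps,R}=0$ on the lateral part of $\partial\mathcal C_\eps$.

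Next I would use as a supersolution in $\mathcal C_\eps$ the function $\Psi_\eps$ defined in \eqref{Psi_eps}, but now with the choice $C_\eps=C_R\eps\big(\min_{|y'|\le 1/2}\psi_1^\Sigma(y')\big)^{-1}$, so that by construction $\Psi_\eps\ge |\bar u_{\eps,R}|$ on $\partial\mathcal C_\eps$. Since $\bar u_{\eps,R}$ is harmonic, Kato's inequality gives $-\Delta|\bar u_{\eps,R}|\le 0$ in $\mathcal C_\eps$, whereas $-\Delta\Psi_\eps=\tfrac{3\lambda_1(\Sigma)}{16\eps^2}\Psi_\eps\ge 0$. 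Hence $\Psi_\eps-|\bar u_{\eps,R}|$ is weakly superharmonic in $\mathcal C_\eps$ and nonnegative on $\partial\mathcal C_\eps$, and testing the inequality against its negative part (which lies in $H^1_0(\mathcal C_\eps)$) gives $|\bar u_{\eps,R}|\le\Psi_\eps$ throughout $\mathcal C_\eps$. Evaluating $\Psi_\eps$ at a point with $x_1\in(\delta,1-\delta)$ and recalling that $\widetilde u_{\eps,R}=\bar u_{\eps,R}$ on $\Omega^\eps_{1+R\eps}\supset\mathcal C_\eps$, I obtain $|\widetilde u_{\eps,R}(x)|\le C\eps\, e^{-\frac{\sqrt{\lambda_1(\Sigma)}}{4\eps}\delta}$, which is even sharper than \eqref{eq:4}. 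Compared with the proof of \eqref{vjeps_soprasol}, this comparison step is actually easier, since $\bar u_{\eps,R}$ satisfies a homogeneous equation and no Sobolev inequality is needed.

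Finally, for \eqref{eq:5} I would run a Caccioppoli estimate: pick a smooth cut-off $\eta=\eta(x_1)$ with $\eta\equiv 1$ on $\Omega^\eps_{1-2\delta}$, $\eta\equiv 0$ outside $\Omega^\eps_{1-\delta}$, and $|\nabla\eta|\le c/\delta$. Testing the equation $-\Delta\bar u_{\eps,R}=0$ against $\eta^2\bar u_{\eps,R}$ (the boundary terms vanish thanks to $\bar u_{\eps,R}|_{\partial\Omega^\eps}=0$ and $\eta|_{x_1=1-\delta}=0$), followed by Cauchy--Schwarz, yields
\[
\int_{\Omega^\eps_{1-2\delta}}|\nabla\bar u_{\eps,R}|^2\,dx\le 4\int_{\mathcal C_\eps}|\nabla\eta|^2\,\bar u_{\eps,R}^2\,dx.
\]
The support of $\nabla\eta$ is contained in $\{1-2\delta<x_1<1-\delta\}\cap\mathcal C_\eps$, whose Lebesgue measure is $O(\eps^{N-1})$; plugging in the pointwise bound \eqref{eq:4} makes the right-hand side $O\big(\eps^{N-1}e^{-\frac{\sqrt{\lambda_1(\Sigma)}}{2\eps}\delta}\big)$, which is exactly \eqref{eq:5}. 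The only genuine technical point is the initial calibration of the constant $C_\eps$ at the correct order $O(\eps)$: this is what upgrades the purely exponential estimate of Lemma \ref{controllo_supersol} to the sharper bounds \eqref{eq:4}--\eqref{eq:5} needed later in the paper.
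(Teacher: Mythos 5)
Your proposal is correct and follows essentially the same two-step scheme as the paper: a supersolution comparison in the channel $\mathcal C_\eps$ (against the barrier $\Psi_\eps$ built from $\psi_1^\Sigma$) to get the pointwise exponential bound \eqref{eq:4}, followed by a cut-off/Caccioppoli argument supported in $\{1-2\delta<x_1<1-\delta\}$ to deduce the energy bound \eqref{eq:5}. The paper's Caccioppoli step is written via $w_\eps=\eta\widetilde u_{\eps,R}$ and the identity $\int|\nabla w_\eps|^2=\int|\nabla\eta|^2\widetilde u_{\eps,R}^2$, which is algebraically equivalent to your Cauchy--Schwarz version.

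One small difference and one small misattribution are worth noting. The paper calibrates the barrier simply by $\|u_0\|_{L^\infty(D^+)}$ (after observing $0\le\widetilde u_{\eps,R}\le\|u_0\|_{L^\infty}$ via the maximum principle on $\Omega^\eps_{1+R\eps}$), whereas you use $u_0(\mathbf e_1)=0$ to calibrate at scale $C_R\eps$; your bound is therefore sharper by a factor $\eps$, which is harmless but not needed. Your closing sentence, however, asserts that this $O(\eps)$ calibration is ``what upgrades the purely exponential estimate of Lemma \ref{controllo_supersol} to the sharper bounds \eqref{eq:4}--\eqref{eq:5}.'' That is not the case: estimate \eqref{stima_puntuale_canale_vjeps} and Corollary \ref{motivazione_O_grande}, derived from Lemma \ref{controllo_supersol}, already have exactly the same orders $O(e^{-\frac{\sqrt{\lambda_1(\Sigma)}}{4\eps}\delta})$ and $O(\eps^{N-1}e^{-\frac{\sqrt{\lambda_1(\Sigma)}}{2\eps}\delta})$ as \eqref{eq:4}--\eqref{eq:5}. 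The exponential decay is a purely geometric effect of the thin tube, driven by $\lambda_1(\Sigma)$; the amplitude calibration only affects the (unstated) prefactor. What actually distinguishes Lemma \ref{l:stimaeste} from Lemma \ref{controllo_supersol} is that $\widetilde u_{\eps,R}$ is harmonic in the channel and sign-definite, which, as you rightly observe, makes the comparison argument simpler (plain maximum principle, no Kato inequality or Sobolev step).
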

\begin{pf}
From \eqref{eq:1} and the maximum principle, it follows that, for
$\eps$ sufficiently small, 
\begin{equation}\label{eq:2}
0\leq   \widetilde u_{\eps,R}(x)\leq
\|u_0\|_{L^{\infty}(D^+)},\quad\text{for all }x\in \Omega^{\eps}_{1+R\eps}.
\end{equation}
We argue as in the proof of estimate \eqref{vjeps_soprasol}
in Lemma \ref{controllo_supersol}. Let us consider the function
\[ 
\widetilde \Psi_{\eps}(x_1,x') = 
\Big( \min_{|y'|\leq 1/2} \psi_1^\Sigma(y') \Big)^{-1} 
\nor{u_0}_{L^\infty} 
\left( e^{-\frac{\sqrt{\lambda_1(\Sigma)}}{4{\eps}} x_1} + e^{-\frac{\sqrt{\lambda_1(\Sigma)}}{4{\eps}} (1-x_1)}\right) 
\psi_1^\Sigma\Big(\frac{x'}{2{\eps}}\Big). 
\]
From \eqref{eq:2} we obtain that, for all $x'\in\eps\Sigma$,
\begin{equation*}
\widetilde \Psi_{\eps}(0,x') 
\geq \nor{u_0}_{L^\infty}
\geq \sup_{\partial \mathcal C_{\eps}}\abs{\widetilde u_{\eps,R}} \quad\text{and}\quad \widetilde \Psi_{\eps}(1,x') 
\geq \nor{u_0}_{L^\infty}
\geq \sup_{\partial \mathcal C_{\eps}}\abs{\widetilde u_{\eps,R}},
\end{equation*}
so that $\widetilde \Psi_\eps\geq \widetilde u_{\eps,R}$ on $\partial \mathcal C_\eps$.
Moreover, 
\[-\Delta \widetilde\Psi_{\eps} =
\frac{3}{16}\frac{\lambda_1(\Sigma)}{{\eps}^2}
\widetilde\Psi_{\eps}\geq 0,\quad\text{in }\mathcal C_\eps,
\] 
whereas $\widetilde u_{\eps,R}$ is nonnegative and harmonic in
$\mathcal C_\eps$, so that 
$-\Delta (\widetilde \Psi_\eps - \widetilde u_{\eps,R}) \geq 0$ on
$\mathcal C_\eps$ and, by the Maximum Principle, we deduce that
\begin{equation*}
 0\leq \widetilde u_{\eps,R}(x) \leq \widetilde \Psi_{\eps}(x), \quad \text{for all }x\in\mathcal C_\eps,
\end{equation*}
from which estimate \eqref{eq:4} follows.
 
 Let $\eta$ be a smooth cut-off function such that $\eta\equiv 1$ in
 $\Omega^\eps_{1-2\delta}$, $\eta\equiv 0$ in
 $\Omega^\eps\setminus \Omega^\eps_{1-\delta}$, $0\leq\eta\leq1$, and
 $|\nabla\eta|\leq 2\delta$. Let
 $w_\eps= \eta \widetilde u_{\eps,R}$. Then
\begin{multline*}
 \int_{\Omega^\eps_{1-2\delta}} |\nabla \widetilde u_{\eps,R}|^2dx
 \leq
 \int_{\Omega^\eps_{1-\delta} } |\nabla w_\eps|^2dx
=-\int_{
\Omega^\eps_{1-\delta}\setminus \Omega^\eps_{1-2\delta}}
\eta(\Delta \eta) \widetilde u_{\eps,R}^2\,dx+\frac12
\int_{
\Omega^\eps_{1-\delta}\setminus \Omega^\eps_{1-2\delta}}
\widetilde u_{\eps,R}^2\Delta(\eta^2)\,dx
\end{multline*}
from which \eqref{eq:5} follows invoking \eqref{eq:4}.
\end{pf}

\noindent For all $R>1$ and $\eps\in(0,\eps_0)$, let us define
\begin{align}
\label{eq:19}&U_\eps(x):=\frac{u_\eps(\e + \eps (x-\e))}{\eps},\qquad  
u_{0,\eps}(x):=\frac{u_0(\e + \eps (x-\e))}{\eps},\\
\label{eq:20}&Z_{\eps}^R(x):=\frac{\widehat {u}_{\eps,R}
(\e + \eps
  (x-\e))}{\eps},\quad 
V_\eps^R:=\dfrac{\widetilde u_{\eps,R}(\e + \eps (x-\e))}{\eps}.
\end{align}

\begin{Lemma}\label{blow_up_vari}
The following convergences hold as $\eps\to 0^+$:
\begin{align}
\label{Phi}
U_\eps\to \big(\tfrac{\partial u_0}{\partial x_1}(\e)\big)\Phi \quad & \text{in
  $\mathcal H_R$ for any $R>2$},\\
\label{x_1} u_{0,\eps} \to \big(\tfrac{\partial u_0}{\partial x_1}(\e)\big) (x_1 -1) \quad & \text{in
  $C^2_{\rm loc}(\overline{D^+})$},\\
\label{z_h} Z_\eps^R\to \big(\tfrac{\partial u_0}{\partial x_1}(\e)\big) z_R \quad & \text{in
  $H^1(B_R^+)$ for any $R>2$},\\
\label{v_k} V_\eps^R\to \big(\tfrac{\partial u_0}{\partial x_1}(\e)\big) v_R \quad & \text{in
  $\mathcal H_R$ for any $R>2$}
\end{align}
with $z_R$ and $v_R$ being as in \eqref{eq:z_R} and \eqref{eq:v_R} respectively.
\end{Lemma}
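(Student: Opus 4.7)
The plan is to treat the four convergences in the order \eqref{x_1}, \eqref{Phi}, \eqref{z_h}, \eqref{v_k}, feeding each into the next. For \eqref{x_1}, assumption \eqref{eq:p2} gives $p\equiv 0$ on $B_3^+$, so $u_0$ is harmonic near $\mathbf e_1$ and, since it vanishes on the flat part of $\partial D^+$, extends smoothly across $\{x_1=1\}$ by reflection. Since $u_0(\mathbf e_1)=0$ and the tangential derivatives of $u_0$ at $\mathbf e_1$ vanish, Taylor's formula yields $u_0(\mathbf e_1+\eps(x-\mathbf e_1))=\eps\,\frac{\partial u_0}{\partial x_1}(\mathbf e_1)(x_1-1)+O(\eps^2|x-\mathbf e_1|^2)$ uniformly on compact subsets of $\overline{D^+}$, and dividing by $\eps$ (together with its analogue at the level of derivatives, obtained by differentiating in $x$ and noting that the $k$-th derivative of $u_{0,\eps}$ picks up a factor $\eps^{k-1}$) produces the claimed $C^2_{\rm loc}$-convergence.

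For \eqref{Phi}, I would first rescale \eqref{problema} to obtain $-\Delta U_\eps=\eps^2\lambda_\eps\,\widetilde p_\eps\,U_\eps$ with $\widetilde p_\eps(x)=p(\mathbf e_1+\eps(x-\mathbf e_1))$. By \eqref{eq:p2}, $\widetilde p_\eps\equiv 0$ on $B_R^+$ and on the portion $\{1-1/(2\eps)\leq x_1\leq 1,\ x'\in\Sigma\}$ of the rescaled channel, so $U_\eps$ is harmonic on any fixed compact subset of $\widetilde D$ for $\eps$ small. Combining the change-of-variable with the energy identity $\int_{\Omega^\eps}|\nabla u_\eps|^2\,dx=\lambda_\eps$ and the exponential bound \eqref{vjeps_soprasol} of Lemma \ref{controllo_supersol}, one shows that $U_\eps$ is bounded in $\mathcal H_R$ once the linear growth in $D^+$ is absorbed. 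Weak compactness extracts a subsequential limit $\widetilde\Phi$ that is harmonic on $\widetilde D$, vanishes on $\partial\widetilde D$, and decays exponentially as $x_1\to-\infty$ in $T_1^-$. The asymptotics at infinity in $D^+$ are pinned down by the $L^\infty_{\rm loc}$-convergence $u_\eps\to u_0$ away from $\mathbf e_1$ (cf.\ \cite{bucur2006}) together with \eqref{x_1} on intermediate scales $\eps\ll|x-\mathbf e_1|\ll 1$, giving $\widetilde\Phi(x)=\frac{\partial u_0}{\partial x_1}(\mathbf e_1)(x_1-1)+o(1)$ as $|x-\mathbf e_1|\to\infty$ in $D^+$. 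The uniqueness clause in \eqref{eq_Phi_1} then forces $\widetilde\Phi=\frac{\partial u_0}{\partial x_1}(\mathbf e_1)\Phi$; since the limit is subsequence-independent, the whole family converges, and strong $\mathcal H_R$-convergence follows by matching Dirichlet energies via Green's identity together with the sharp asymptotics \eqref{eq:Phiinfty}--\eqref{eq:phiminusinfty}.

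Parts \eqref{z_h} and \eqref{v_k} reduce to continuous dependence of a harmonic extension on its boundary data. For \eqref{z_h}, $Z_\eps^R$ is harmonic on $B_R^+$, vanishes on the flat disk $\partial D^+\cap\overline{B_R^+}$ (since $\bar v_{\eps,R}=0$ on $\partial D^+$), and equals $U_\eps$ on $\Gamma_R^+$; by \eqref{Phi} and the trace theorem, $U_\eps|_{\Gamma_R^+}\to\frac{\partial u_0}{\partial x_1}(\mathbf e_1)\Phi|_{\Gamma_R^+}$ in $H^{1/2}(\Gamma_R^+)$, and continuity of the mixed Dirichlet harmonic extension $H^{1/2}(\Gamma_R^+)\to H^1(B_R^+)$ yields the conclusion. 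Analogously, $V_\eps^R$ is harmonic on the rescaled $\Omega^\eps_{1+R\eps}$, vanishes on $\partial\Omega^\eps$ in rescaled coordinates, and equals $u_{0,\eps}$ on $\Gamma_R^+$; \eqref{x_1} identifies the limiting boundary data, the energy estimate \eqref{eq:5} of Lemma \ref{l:stimaeste} prevents any loss of mass in the shrinking tail of the channel, and continuous dependence of the Dirichlet minimizer defining \eqref{eq:v_R} on its boundary values gives $\mathcal H_R$-convergence to $\frac{\partial u_0}{\partial x_1}(\mathbf e_1)v_R$.

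The main obstacle is \eqref{Phi}: one must simultaneously control the linear growth of $U_\eps$ at infinity in $D^+$, its exponential decay in the channel, and identify the limit via the uniqueness built into \eqref{eq_Phi_1}; upgrading weak to strong convergence in $\mathcal H_R$ requires a careful matching of the Dirichlet energy through the asymptotics of $\Phi$ in both regimes, while the remaining parts are essentially corollaries once \eqref{x_1} and \eqref{Phi} are in hand.
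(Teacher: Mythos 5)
Your treatment of \eqref{x_1} is correct and in fact more direct than the paper's: you exploit that $p\equiv 0$ on $B_3^+$ to get harmonicity and a Taylor expansion after reflection, whereas the paper runs a compactness-plus-elliptic-regularity argument; both work. Your arguments for \eqref{z_h} and \eqref{v_k} are also essentially the paper's: reduce to continuous dependence of a (mixed) harmonic extension on its $\Gamma_R^+$-data, which the paper implements via the Dirichlet principle with a cut-off rather than via the trace theorem, and for \eqref{v_k} control the infinite tail of the rescaled channel; note only that the rescaled domain is not $T_1^-\cup B_R^+$ but a channel of length $1/\eps$ attached to a rescaled $D^-$, so strictly speaking one has a domain perturbation as well as a data perturbation — the paper handles this by extracting a weak $\mathcal H_R$-limit and identifying it as the harmonic function in $T_1^-\cup B_R^+$ with the limiting trace, which is the rigorous version of your ``continuous dependence'' step.

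The genuine gap is in \eqref{Phi}. The paper does not prove this convergence here: it is imported wholesale from \cite[Lemma 4.1 and Corollary 4.4]{FT12} and \cite[Lemmas 2.1 and 2.4]{AFT12}. You attempt a self-contained blow-up argument, and its decisive step — ``one shows that $U_\eps$ is bounded in $\mathcal H_R$ once the linear growth in $D^+$ is absorbed'' — is precisely the hard analytic content and is asserted rather than proved. Since $\nabla U_\eps(x)=\nabla u_\eps(\e+\eps(x-\e))$, boundedness of $U_\eps$ in $\mathcal H_R$ is equivalent to $\int_{\Omega^\eps_{1+R\eps}}|\nabla u_\eps|^2\,dx=O(\eps^N)$, i.e.\ to the statement that $u_\eps$ vanishes at the junction to order exactly one, uniformly in $\eps$. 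This does not follow from the global energy identity $\int_{\Omega^\eps}|\nabla u_\eps|^2=\lambda_\eps$ together with the channel supersolution bound \eqref{vjeps_soprasol} (which controls $\mathcal C_\eps$ away from its endpoints but says nothing about $B_{R\eps}^+$); it is obtained in \cite{FT12} by an Almgren-type monotonicity/doubling analysis at the junction, and the identification of the coefficient $\frac{\partial u_0}{\partial x_1}(\e)$ via matching on intermediate scales also relies on those quantitative estimates. Without either citing that input or reproducing it, the proof of \eqref{Phi} — and hence of \eqref{z_h} and \eqref{v_k}, which you correctly derive from it — is incomplete.
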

\begin{pf}
 The convergence \eqref{Phi} follows from \cite[Lemma 4.1 and
 Corollary 4.4]{FT12} and \cite[Lemmas 2.1 and 2.4]{AFT12}.
In order to prove \eqref{x_1}, we notice that
\[
 u_{0,\eps} \to \nabla u_0(\e) \cdot (x-\e)=\frac{\partial u_0}{\partial x_1}(\e) (x_1 -1),
\quad \text{for all }x\in D^+.
\] 
Furthermore, for every $t>0$, 
\[
 \int_{B_t^+} \abs{\nabla u_{0,\eps}}^2dx = \int_{B_t^+} \abs{\nabla u_0(\e+\eps(x-\e))}^2\,dx
= \eps^{-N} \int_{B_{t\eps}^+} \abs{\nabla u_0}^2dx \leq  {\rm
  const\,}t^N
\]
for some ${\rm
  const\,}>0$ independent of $\eps$ and $t$. Then, by a diagonal
process, one can easily prove that,  up to
subsequences, $u_{0,\eps}$ weakly converges in $H^1(B_t^+)$ for all
$t>0$. By elliptic regularity theory we conclude that  $u_{0,\eps}$
converges  to
its point-wise limit in $C^2_{\rm loc}(\overline{D^+})$ (since such a
limit does not depend on the subsequence, the convergence actually
holds as $\eps\to 0^+$).

In order to prove \eqref{z_h}, we notice that $Z_\eps^R - \big(\tfrac{\partial u_0}{\partial x_1}(\e)\big) z_R$ solves
\begin{equation*}
 \begin{cases}
   -\Delta \big(Z_\eps^R - \big(\tfrac{\partial u_0}{\partial x_1}(\e)\big) z_R\big) =0, &\text{in $B_R^+$}\\
   Z_\eps^R - \big(\tfrac{\partial u_0}{\partial x_1}(\e)\big) z_R =
   U_\eps - \big(\tfrac{\partial u_0}{\partial
     x_1}(\e)\big)\Phi, &\text{on $\Gamma_R^+$},
 \end{cases}
\end{equation*}
and, by the Dirichlet principle and \eqref{Phi},
\begin{align*}
 \int_{B_R^+} &\abs{\nabla(Z_\eps^R -\big(\tfrac{\partial u_0}{\partial x_1}(\e)\big) z_R) }^2dx
\leq \int_{B_R^+} \abs{\nabla\Big(\eta
\Big(U_\eps - \big(\tfrac{\partial u_0}{\partial x_1}(\e)\big)\Phi\Big)\Big)}^2 dx\\
&\leq 2 \left(\int_{B_R^+}\abs{\nabla \eta}^2\Big|
U_\eps - \big(\tfrac{\partial u_0}{\partial x_1}(\e)\big)\Phi\Big|^2dx 
+\int_{B_R^+}
\eta^2\Big|\nabla\big(U_\eps - \big(\tfrac{\partial u_0}{\partial x_1}(\e)\big)\Phi\big)\Big|^2dx\right)\\
&= o(1)\quad \text{as }\eps\to 0^+,
\end{align*}
where $\eta$ is a smooth cut-off function such that 
$\eta\equiv 0$ in $B_{R/2}^+$,
$\eta\equiv 1$ in $D^+\setminus B_{R}^+$. Then 
\[
Z_\eps^R \to \big(\tfrac{\partial u_0}{\partial x_1}(\e)\big)z_R
\]
as $\eps\to 0^+$ in
$H^{1}(B_R^+)$ and
convergence \eqref{z_h} is proved. 

In order to prove \eqref{v_k}, we first notice that, in view of \eqref{eq:min2},
\begin{equation*}
\|V_\eps^R\|^2_{\mathcal H_R}=\eps^{-N} \int_{\Omega^\eps_{1+\eps
    R}}|\nabla \widetilde u_{\eps,R}|^2dx =
\eps^{-N} \int_{\Omega^\eps_{1+\eps
    R}}|\nabla \bar u_{\eps,R}|^2dx
\leq \eps^{-N} \int_{B_{\eps R}^+} \abs{\nabla
  u_0}^2 dx\leq {\rm const}
\end{equation*}
some ${\rm const\,}>0$ independent of $\eps$. Then, up to
subsequences, $V_\eps^R \weakly w$ weakly in $\mathcal H_R$ and
strongly in $L^2(\Gamma_R^+)$ as $\eps \to 0^+$ for some $w\in \mathcal
H_{0,R}$ which is is harmonic in $T_1^-\cup B_R^+$. Since, by
\eqref{x_1}, $V_\eps^R\big|_{\Gamma_R^+}=u_{0,\eps}\to
\big(\tfrac{\partial u_0}{\partial x_1}(\e)\big)(x_1-1)$ in
$L^2(\Gamma_R^+)$, we conclude that $w=\big(\tfrac{\partial u_0}{\partial x_1}(\e)\big)v_R$; in particular, since the
weak $\mathcal H_R$-limit of $V_\eps^R$ does not depend on the
subsequence, the convergence actually holds as $\eps\to 0^+$.

Moreover, by standard
interior elliptic estimates, it is easy to prove that the convergence
is strong in $\mathcal H_r$ for every $r\in(1,R)$. 
 In addition, we
can prove that 
\[
V_\eps^R \to \big(\tfrac{\partial u_0}{\partial x_1}(\e)\big)v_R \quad \text{in $H^1(B_R^+\setminus
\overline{B_{R/2}^+})$.}
\]
  Indeed, since $V_\eps^R - \big(\tfrac{\partial u_0}{\partial x_1}(\e)\big)v_R$ is harmonic on
$B_R^+\setminus \overline{B_{R/2}^+}$, then its energy is less or
equal to the energy of any other $H^1$-function with the same boundary
conditions on $\partial (B_R^+\setminus \overline{B_{R/2}^+})$.
In particular, letting $\eta$ be a smooth cut-off function such that 
$\eta\equiv 0$ in $B_{R/2}^+$ and
$\eta\equiv 1$ in $D^+\setminus B_{R}^+$, 
and $\varphi$ be a smooth cut-off function such that 
$\varphi\equiv 1$ in $B_{R/2}^+$ and
$\varphi\equiv 0$ in $D^+\setminus B_{(3/4)R}^+$, in view of
\eqref{x_1} and  $\mathcal H_r$-convergence  for $r\in(1,R)$, we obtain that
\begin{align*}
  & \int_{B_R^+\setminus B_{R/2}^+}|\nabla (V_\eps^R
  -\big(\tfrac{\partial u_0}{\partial x_1}(\e)\big) v_R)|^2\,dx\\
&\leq
  \int_{B_R^+\setminus {B_{R/2}^+}}\Big|\nabla
  \Big(\eta\Big(u_{0,\eps}-\big(\tfrac{\partial u_0}{\partial
    x_1}(\e)\big)(x_1-1)\Big)\Big)+\varphi( V_\eps^R -
  \big(\tfrac{\partial u_0}{\partial x_1}(\e)\big)v_R)\big)\Big|^2\,dx \\
  &\leq 4\int_{B_R^+\setminus
    {B_{R/2}^+}}|\nabla\eta|^2\big|u_{0,\eps} -\big(\tfrac{\partial u_0}{\partial x_1}(\e)\big)(x_1-1)\big|^2dx\\
&\qquad  +4\int_{B_R^+\setminus {B_{R/2}^+}}\eta^2 \big|\nabla
  \big(u_{0,\eps}-\big(\tfrac{\partial u_0}{\partial x_1}(\e)\big)(x_1-1)\big)\big|^2dx\\
  &\qquad + 4\int_{B_R^+\setminus {B_{R/2}^+}}|\nabla\varphi|^2(
  V_\eps^R -\big(\tfrac{\partial u_0}{\partial x_1}(\e)\big) v_R)^2\,dx\\
  &\qquad + 4\int_{B_{(3/4)R}^+\setminus
    {B_{R/2}^+}}\varphi^2|\nabla( V_\eps^R - \big(\tfrac{\partial u_0}{\partial x_1}(\e)\big)
  v_R)|^2\,dx=o(1)\quad\text{as }\eps\to 0^+.
\end{align*}
Hence $V_\eps^R \to \big(\tfrac{\partial u_0}{\partial x_1}(\e)\big) v_R$ in $H^1(B_R^+\setminus
\overline{B_{R/2}^+})$, which, together with 
 $\mathcal H_r$-convergence  for $r\in(1,R)$, implies \eqref{v_k}.~\end{pf}

\begin{remark}\label{rem:hdiv}
  Convergences \eqref{z_h} and \eqref{v_k} together 
  with  the normal trace embedding theorem for $H(\mathop{\rm
    div};\Omega)$ (see e.g. \cite[Chapter 20]{tartar}),
imply that, for all $R>2$, 
\begin{align*}
& \dfrac{\partial Z_\eps^R}{\partial \nu} \to \bigg(\frac{\partial u_0}{\partial x_1}(\e)\bigg)\dfrac{\partial
   z_R}{\partial \nu} \quad 
\text{in $H^{-1/2}(\Gamma_R^+)$}\quad \text{as $\eps\to 0$},\\
& \dfrac{\partial V_\eps^R}{\partial \nu} \to \bigg(\frac{\partial u_0}{\partial x_1}(\e)\bigg)\dfrac{\partial
   v_R}{\partial \nu} \quad 
\text{in $H^{-1/2}(\Gamma_R^+)$}\quad \text{as $\eps\to 0$},
\end{align*}
where  $\nu=\nu(x)=\frac{x}{|x|}$ is  the normal external unit vector to $\Gamma_R^+$.
\end{remark}

\noindent As a straightforward corollary of the blow-up analysis
performed in Lemma \ref{blow_up_vari}, we obtain the
following result, which will play a crucial role in the proof of
Theorem \ref{teo_asintotico_autofunzioni}.
\begin{Corollary}\label{stima_autofunzioni_via_blow_up}
 Under assumptions {\rm (\ref{eq:p}--\ref{eq:54})}, let $u_\eps$ and
 $u_0$ as in {\rm (\ref{problema}--\ref{convergenza_u_0})}
 and {\rm (\ref{eq:u0}--\ref{eq:13})}.
Then
\begin{equation}
\lim_{\eps\to0^+}\dfrac{1}{\eps^N} \int_{\Omega^\eps_{1+R\eps}}
\abs{\nabla (u_\eps - u_0)}^2dx=
\big(\tfrac{\partial
  u_0}{\partial x_1}(\e)\big)^2
\int_{T_1^-\cup B_R^+} \abs{\nabla (\Phi - (x_1-1)^+)}^2dx
\end{equation}
for all $R>2$.
\end{Corollary}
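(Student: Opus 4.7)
The plan is to perform the blow-up at $\e$ and reduce the claim to the strong $\mathcal H_R$-convergence and $C^2_{\rm loc}$-convergence already established in Lemma \ref{blow_up_vari}. Under the change of variables $y=\e+\eps(x-\e)$, which has Jacobian $\eps^N$ and satisfies $|\nabla_y u|^2=|\nabla_x(u(\e+\eps(x-\e))/\eps)|^2$, recalling definitions \eqref{eq:19}, the left-hand side of the claim becomes
\[
\eps^{-N}\int_{\Omega^\eps_{1+R\eps}}|\nabla(u_\eps-u_0)|^2\,dy=\int_{\widetilde\Omega^\eps_R}|\nabla(U_\eps-u_{0,\eps})|^2\,dx,
\]
where $\widetilde\Omega^\eps_R=\{x_1<1-1/\eps\}\cup\{1-1/\eps\le x_1\le 1,\,x'\in\Sigma\}\cup B^+_R$ is the image of $\Omega^\eps_{1+R\eps}$. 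Since $u_0$ is trivially extended by zero outside $D^+$, $u_{0,\eps}$ vanishes on $\widetilde\Omega^\eps_R\cap\{x_1\le 1\}$; hence the above integral equals
\[
\int_{B^+_R}|\nabla(U_\eps-u_{0,\eps})|^2\,dx+\int_{\widetilde\Omega^\eps_R\cap\{x_1\le 1\}}|\nabla U_\eps|^2\,dx.
\]
Moreover, since $u_\eps$ vanishes on $\partial\Omega^\eps$ and, by \eqref{eq:condsigma}, $\Sigma\subset\{|x'|<1\}\subset\{|x'|<R\}$, extending $U_\eps$ by zero outside $\widetilde\Omega^\eps_R$ yields a function of $\mathcal H_R$ for which the last integral equals $\int_{(-\infty,1]\times\R^{N-1}}|\nabla U_\eps|^2\,dx$.

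On the fixed half-ball $B^+_R$, convergence \eqref{Phi} gives $U_\eps\to\big(\tfrac{\partial u_0}{\partial x_1}(\e)\big)\Phi$ strongly in $H^1(B^+_R)$, while \eqref{x_1} yields $u_{0,\eps}\to\big(\tfrac{\partial u_0}{\partial x_1}(\e)\big)(x_1-1)$ in $C^2_{\rm loc}(\overline{D^+})$; since $(x_1-1)^+=x_1-1$ on $B^+_R\subset D^+$, passing to the limit in the first summand produces the $B^+_R$-contribution to the right-hand side of the claim. For the second summand, the strong convergence in \eqref{Phi} implies $\nabla U_\eps\to\big(\tfrac{\partial u_0}{\partial x_1}(\e)\big)\nabla\Phi$ in $L^2((-\infty,1]\times\R^{N-1})$, where $\Phi$ is extended by zero outside $\widetilde D$ (an extension that stays in $H^1$ thanks to the Dirichlet condition on $\partial T_1^-\cap\{x_1<1\}$ in \eqref{eq_Phi_1}) and has gradient supported on $T_1^-$; thus
\[
\int_{(-\infty,1]\times\R^{N-1}}|\nabla U_\eps|^2\,dx\longrightarrow\Big(\tfrac{\partial u_0}{\partial x_1}(\e)\Big)^{\!2}\int_{T_1^-}|\nabla\Phi|^2\,dx,
\]
which, since $(x_1-1)^+\equiv 0$ on $T_1^-$, matches the $T_1^-$-contribution to the right-hand side of the claim. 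Summing completes the proof.

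The only genuine subtlety is the last step, where the domain $\widetilde\Omega^\eps_R\cap\{x_1\le 1\}$ varies with $\eps$---the floor $\{x_1=1-1/\eps\}$ recedes to $-\infty$ and the rescaled cross-section in $\{x_1\le 0\}$ becomes the whole slab $\R^{N-1}$---, while the target domain is the fixed set $T_1^-$. Fortunately this is precisely what is absorbed by the choice of the functional space $\mathcal H_R$, whose norm is the $L^2$-norm of the gradient on the full ambient set $((-\infty,1]\times\R^{N-1})\cup B^+_R$. If one prefers to bypass the $\mathcal H_R$-convergence on the whole half-space, a fixed-height truncation at $x_1=-A$ allows one to apply the strong convergence on the compact slab $\{-A\le x_1\le 1\}\times\Sigma$, while Corollary \ref{motivazione_O_grande} bounds the contribution of the far tube and of $D^-$, and the finite energy of $\Phi$ on $T_1^-$ (by \eqref{eq_Phi_1}) allows one to let $A\to+\infty$.
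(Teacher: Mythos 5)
Your proof is correct and follows exactly the route the paper takes: its own proof of this corollary is the one-line remark that the claim "follows from \eqref{Phi} and \eqref{x_1} through a change of variable," and your argument is just a careful spelling-out of that change of variables together with the splitting into the $B_R^+$ part and the $\{x_1\le 1\}$ part controlled by the $\mathcal H_R$-norm. No gaps.
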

\begin{pf}
The thesis follows from \eqref{Phi} and \eqref{x_1} through a change of variable.~\end{pf}

\section{Proof of Theorem \ref{teo_asintotico_autovalori}}\label{sec:proof-theor-reft}

Let us recall and fix some notation we are going to use throughout this
section.
We recall that $\lambda_\eps=\lambda_{\bar k}(\Omega^\eps)$ denotes
the $\bar k$-th eigenvalue of problem \eqref{eq:30} on the domain
$\Omega^\eps$ and $\lambda_0=\lambda_{\bar k}(D^-\cup D^+)$
denotes the $\bar k$-th eigenvalue on $D^- \cup D^+$ which is equal to the
simple $k_0$-th eigenvalue on $D^+$. Let $u_\eps$ be the eigenfunction on
$\Omega^\eps$ associated to $\lambda_\eps$ satisfying \eqref{problema}
and \eqref{convergenza_u_0}. 

For every 
$j=1,2,\dots,\bar k-1$, we fix an eigenfunction $v_j^\eps\in \mathcal
D^{1,2}(\Omega^\eps)$ associated to $\lambda_j(\Omega^\eps)$  on
$\Omega^\eps$  such that
$\int_{\Omega^\eps}p|v_j^\eps|^2dx =1$
and an eigenfunction $v_j^0\in \mathcal
D^{1,2}(D^-\cup D^+)$ associated to the
eigenvalue $\lambda_j(D^-\cup D^+)$ on $D^-\cup D^+$ such that 
$\int_{D^-\cup D^+}p|v_j^0|^2dx =1$. 
 In particular, we can choose such
eigenfunctions in such a way that
\begin{align*}
&\int_{D^-\cup D^+}\nabla v_j^0\cdot \nabla v_i^0\,dx=0,\quad
\int_{\Omega^\eps}\nabla v_j^\eps\cdot \nabla v_i^\eps\,dx=0,
\quad\text{if
}i\neq j,\ 1\leq i,j\leq \bar k-1,\\
&\int_{D^-\cup D^+}\nabla v_j^0\cdot \nabla u_0\,dx=0,\quad
\int_{\Omega^\eps}\nabla v_j^\eps\cdot \nabla u_\eps\,dx=0,
\quad\text{for all
}1\leq j\leq \bar k-1.
\end{align*}
In the sequel we will denote $\lambda_j(D^-\cup D^+)$ as $\lambda_j^0$
and $\lambda_j(\Omega^\eps)$ as $\lambda_j^\eps$ (we recall that 
the eigenvalues are repeated as many times as their own multiplicity).

The proof of Theorem \ref{teo_asintotico_autovalori} is based on the
following  preliminary result.

\begin{Theorem}\label{t:asintotico_autovalori1}
Under assumptions
\eqref{eq:condsigma}, \eqref{eq:p}, \eqref{eq:p2}, \eqref{eq:53}, and
\eqref{eq:54}, let $\lambda_\eps=\lambda_{\bar k}(\Omega^\eps)$ be 
the $\bar k$-th eigenvalue of problem \eqref{problema}  on
the domain $\Omega^\eps$ defined in \eqref{eq:31} and
$\lambda_0=\lambda_{k_0}(D^+)=\lambda_{\bar k}(D^- \cup D^+)$ be the $\bar k$-th eigenvalue
of problem \eqref{eq:30}
on $D^- \cup D^+$ (which is equal to the simple $k_0$-th eigenvalue on
$D^+$). Then 
\[
\lim_{\eps\to 0^+}\frac{\lambda_0 - \lambda_\eps}{\eps^N}=
\bigg(\frac{\partial u_0}{\partial x_1}(\e)\bigg)^{\!\!2} N
\int_{\SN_+} \left(\Phi(\e+\theta) - \theta_1 \right)\theta_1
\,d\sigma(\theta)\in(0,+\infty),
\]
where $\Phi$  is defined in  
\eqref{eq_Phi_1}.
\end{Theorem}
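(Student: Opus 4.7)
The plan is to sandwich $\lambda_\eps$ between an upper and a lower bound obtained from the Courant--Fischer minimax characterization, and then extract the leading $\eps^N$ term via the blow-up analysis of Lemma \ref{blow_up_vari}. For the \textbf{upper bound on $\lambda_\eps$}, I would take the $\bar k$-dimensional subspace of $\mathcal D^{1,2}(\Omega^\eps)$ spanned by the first $\bar k-1$ eigenfunctions $v_1^\eps,\dots,v_{\bar k-1}^\eps$ on $\Omega^\eps$ and the test function $\widetilde u_{\eps,R}$ of \eqref{u_0_entra_nel_canale}. Diagonalizing the generalized eigenvalue problem on this subspace, the minimax yields
\[
\lambda_\eps\leq \frac{\int_{\Omega^\eps}|\nabla\widetilde u_{\eps,R}|^2-\sum_{j<\bar k}\lambda_j^\eps\beta_j^2}{\int_{\Omega^\eps}p\,\widetilde u_{\eps,R}^2-\sum_{j<\bar k}\beta_j^2},\qquad \beta_j=\int_{\Omega^\eps} p\,\widetilde u_{\eps,R}\,v_j^\eps.
\]
Using $p\equiv 0$ on $B_3^+$, the harmonicity of $\bar u_{\eps,R}$ in $\Omega^\eps_{1+R\eps}$, the equation \eqref{eq:u0} for $u_0$, and Green's identity, one obtains the key identity
\[
\int_{\Omega^\eps}|\nabla\widetilde u_{\eps,R}|^2=\lambda_0+\int_{\Gamma^+_{R\eps}}u_0\bigl(\partial_\nu\bar u_{\eps,R}-\partial_\nu u_0\bigr)\,d\sigma.
\]
A symmetric argument using the $k_0$-dimensional minimax for $\lambda_0=\lambda_{k_0}(D^+)$ with test function $\widehat u_{\eps,R}$ of \eqref{v_j^eps_zero_al_giunto} and the lower eigenfunctions on $D^+$ yields the \textbf{lower bound on $\lambda_\eps$} via
\[
\int_{D^+}|\nabla\widehat u_{\eps,R}|^2=\lambda_\eps\int_{D^+}p\,\widehat u_{\eps,R}^2+\int_{\Gamma^+_{R\eps}}u_\eps\bigl(\partial_\nu\bar v_{\eps,R}-\partial_\nu u_\eps\bigr)\,d\sigma.
\]

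To pass to the limit, I rescale via $x=\e+\eps y$ so that each boundary integral becomes $\eps^N$ times an integral on $\Gamma_R^+$ in the rescaled variables. The convergences of Lemma \ref{blow_up_vari} together with the $H^{-1/2}$-convergence of normal derivatives in Remark \ref{rem:hdiv} yield, for each fixed $R>2$, matching $\limsup$ and $\liminf$ bounds for $\eps^{-N}(\lambda_0-\lambda_\eps)$ with leading coefficient $\bigl(\partial u_0/\partial x_1(\e)\bigr)^2$. Letting $R\to\infty$, I would use Lemma \ref{lemma_v_R}(i) ($v_R\to\Phi$ in every $\mathcal H_t$), the harmonicity of $z_R$, and the flux formulas Lemma \ref{lemma_Phi_1}(ii) and Lemma \ref{lemma_z_R} to produce the $R$-independent identity
\[
\int_{\Gamma_R^+}(x_1-1)\bigl(\partial_\nu z_R-\partial_\nu\Phi\bigr)\,d\sigma
=N\int_{\SN_+}\bigl(\Phi(\e+\theta)-\theta_1\bigr)\theta_1\,d\sigma(\theta);
\]
the residual $\int_{\Gamma_R^+}(\Phi-(x_1-1))(\partial_\nu z_R-\partial_\nu\Phi)\,d\sigma$ vanishes as $R\to\infty$ in view of \eqref{eq:Phiinfty}. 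The corresponding analysis for the upper bound, relying on Lemma \ref{l:secondo_lemma_Phi_1}(ii)--(iii), gives the same limit, so that both sides of the sandwich collapse onto $\bigl(\partial u_0/\partial x_1(\e)\bigr)^2\cdot N\int_{\SN_+}(\Phi(\e+\theta)-\theta_1)\theta_1\,d\sigma(\theta)$; positivity of this limit follows from $\Phi>(x_1-1)^+$ on $\widetilde D$.

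The \textbf{main obstacle} is the rigorous $o(\eps^N)$ control of the error terms --- in particular $\beta_j^2=o(\eps^N)$ and $\int_{\Omega^\eps}p\,\widetilde u_{\eps,R}^2=1+o(\eps^N)$ --- which is delicate since at this stage only the $\mathcal D^{1,2}$-convergence $u_\eps\to u_0$ (without a rate) is available. These estimates must be extracted from the exponential localization of $\widetilde u_{\eps,R}$ along the channel provided by Lemma \ref{l:stimaeste}, together with the exponential concentration of each lower eigenfunction $v_j^\eps$ on the single component of $D^-\cup D^+$ supporting its limit $v_j^0$, a fact that in turn follows from \eqref{eq:53}--\eqref{eq:54} combined with the supersolution bound of Lemma \ref{controllo_supersol}. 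The double limit (first $\eps\to0$ at fixed $R$, then $R\to\infty$) must also be arranged so that the upper and lower $R$-dependent bounds converge to the same value.
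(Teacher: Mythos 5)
Your overall architecture is the same as the paper's: a Courant--Fischer sandwich built on the two transition functions $\widetilde u_{\eps,R}$ and $\widehat u_{\eps,R}$, identification of the $\eps^N$-coefficient through the blow-up convergences of Lemma \ref{blow_up_vari} and Remark \ref{rem:hdiv}, and a final limit $R\to+\infty$ via the flux formulas of Lemmas \ref{lemma_Phi_1}, \ref{lemma_z_R}, \ref{lemma_v_R}; your $R$-independent identity $\int_{\Gamma_R^+}(x_1-1)(\partial_\nu z_R-\partial_\nu\Phi)\,d\sigma=N\Upsilon_N(\varphi(1)-\Upsilon_N)$ is exactly the computation in \eqref{limite_aR}. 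Your two deviations (completing the upper-bound test space with the perturbed eigenfunctions $v_j^\eps$ instead of the limit ones $v_j^0$, and running the lower bound on $D^+$ alone with the $k_0$-dimensional minimax instead of on $D^-\cup D^+$ with cut-off perturbed eigenfunctions) are legitimate and even streamline the cross-term bookkeeping on the Dirichlet-form side.

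There is, however, one concrete flaw in the justification of the step you yourself flag as the main obstacle. To get $\beta_j^2=o(\eps^N)$ you need $\int_{D^+}p\,u_0\,v_j^\eps\,dx=o(\eps^{N/2})$, and you propose to extract this from an ``exponential concentration of each $v_j^\eps$ on the single component supporting its limit''. This cannot work for the indices $j$ with $\lambda_j^0\in\sigma_p(D^+)$: there $v_j^\eps\to v_j^0\not\equiv0$ on $D^+$, so $v_j^\eps$ is not small on $D^+$ at all; moreover Lemma \ref{controllo_supersol} only yields decay \emph{inside the channel}, and the one-chamber localization you invoke is established only in the resonant section (Theorem \ref{t:reso_autofun}) by a separate resolvent argument. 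What actually saves the step is approximate $p$-orthogonality rather than smallness: testing \eqref{eq:u0} with $v_j^\eps$ and \eqref{eq:equagenereigenf} with $u_0$ extended by zero, and subtracting, gives
\begin{equation*}
(\lambda_0-\lambda_j(\Omega^\eps))\int_{D^+}p\,u_0\,v_j^\eps\,dx=\int_{\{1\}\times(\eps\Sigma)}\frac{\partial u_0}{\partial x_1}\,v_j^\eps\,dx',
\end{equation*}
whose right-hand side is $o(\eps^{N-1})=o(\eps^{N/2})$ by \eqref{vjeps_bucur}, while $\lambda_0-\lambda_j(\Omega^\eps)$ stays bounded away from zero by the simplicity of $\lambda_0$; the analogous flux identity handles the cross terms in your lower bound on $D^+$. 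Alternatively, you can sidestep the issue entirely as the paper does, spanning with the limit eigenfunctions $v_j^0$ (exactly $p$-orthogonal to $u_0$) at the price of estimating the Dirichlet cross terms $\int_{\Omega^\eps}\nabla v_j^0\cdot\nabla\widetilde u_{\eps,R}\,dx=O(\eps^N)$ as in \eqref{eq:12}. With either repair the argument closes.
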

\begin{pf}
  We observe that a straightforward consequence of the minimax
  principle for eigenvalues is that $\lambda_\eps \leq \lambda_0$. We
  are going to prove first two estimates for the quantity
  $\frac{\lambda_0 - \lambda_\eps}{\eps^N}$, one from below and one
  from above, in order to reach, for every $R>2$,  an estimate of the type
\[
K_1(\eps,R)\leq \frac{\lambda_0 - \lambda_\eps}{\eps^N} \leq  
K_2(\eps,R)
\]
for some constants $K_1(\eps,R),K_2(\eps,R)>0$ depending $\eps$ and $R$; secondly, we will 
prove that 
\[
\lim_{R\to+\infty}\lim_{\eps\to 0^+}K_1(\eps,R)=
\lim_{R\to+\infty}\lim_{\eps\to 0^+}K_2(\eps,R)=\bigg(\frac{\partial u_0}{\partial x_1}(\e)\bigg)^{\!\!2} N
\int_{\SN_+} \left(\Phi(\e+\theta) - \theta_1 \right)\theta_1
\,d\sigma(\theta)
\]
thus implying the stated asymptotics.

\smallskip\noindent
{\bf Step 1:} estimate from below.
From the Courant-Fisher \emph{minimax characterization} of the
Dirichlet eigenvalues, we have that
\begin{equation}
  \lambda_\eps = \min\bigg\{\max_{u\in E\setminus \{0\}}
\dfrac{\int_{\Omega^\eps} \abs{\nabla u}^2dx}{\int_{\Omega^\eps}
  pu^2\,dx}:E \text{ is a subspace of $\Di{\Omega^\eps}$ \text{such
    that }$\dim E=\bar k$}\bigg\}.
\end{equation}
Let $R>2$. If we choose the space $E=\mathop{\rm span} \{ v_1^0,v_2^0,\dots,
v^0_{\bar k-1},\widetilde u_{\eps,R}\}$ (where the functions $v_j^0$
are trivially extended to the whole $\Omega^\eps$), 
we have that $\dim E=\bar k$ and then 
\begin{align*}
 \lambda_\eps &\leq
\max_{\substack{(\alpha_1,\dots, \alpha_{\bar k-1},\beta)\in
    \R^{\bar k}\\
(\sum_{j=1}^{\bar k-1}\alpha_j^2) + \beta^2 =1}}
 \dfrac{\int_{\Omega^\eps} |\nabla (\sum_{j=1}^{\bar k-1}\alpha_j v_j^0 + \beta \widetilde u_{\eps,R})|^2}
{\int_{\Omega^\eps} p(\sum_{j=1}^{\bar k-1}\alpha_j v_j^0 + \beta \widetilde u_{\eps,R})^2}\\
&= \max_{\substack{(\alpha_1,\dots, \alpha_{\bar k-1},\beta)\in
    \R^{\bar k}\\
(\sum_{j=1}^{\bar k-1}\alpha_j^2) + \beta^2 =1}}
\dfrac{\sum_{j=1}^{\bar k-1}\alpha_j^2 \int_{\Omega^\eps} |\nabla v_j^0|^2
+ \beta^2 \int_{\Omega^\eps}\abs{\nabla \widetilde u_{\eps,R}}^2 + 2
\sum_{j=1}^{\bar k-1}\alpha_j\beta
\int_{\Omega^\eps}
\nabla v_j^0\cdot\nabla\widetilde u_{\eps,R}}
{\sum_{j=1}^{\bar k-1}\alpha_j^2\int_{\Omega^\eps} p |v_j^0|^2
+ \beta^2 \int_{\Omega^\eps}p {\widetilde u_{\eps,R}}^2 + 2 \sum_{j=1}^{\bar k-1}\alpha_j\beta
\int_{\Omega^\eps}p v_j^0 \widetilde u_{\eps,R}}\\
&= \max_{\substack{(\alpha_1,\dots, \alpha_{\bar k-1},\beta)\in
    \R^{\bar k}\\
(\sum_{j=1}^{\bar k-1}\alpha_j^2) + \beta^2 =1}}
\dfrac{\sum_{j=1}^{\bar k-1}\alpha_j^2 \int_{\Omega^\eps} |\nabla v_j^0|^2
+ \beta^2 \int_{\Omega^\eps}\abs{\nabla \widetilde u_{\eps,R}}^2 + 2
\sum_{j=1}^{\bar k-1}\alpha_j\beta
\int_{\Omega^\eps}
\nabla v_j^0\cdot\nabla\widetilde u_{\eps,R}}
{1+\beta^2\int_{\Omega^\eps_{1/2}}p {\bar u_{\eps,R}}^2 + 2 \sum_{j=1}^{\bar k-1}\alpha_j\beta
\int_{\Omega^\eps}p v_j^0 \widetilde u_{\eps,R}}\\
&\leq  \max_{\substack{(\alpha_1,\dots, \alpha_{\bar k-1},\beta)\in
    \R^{\bar k}\\
(\sum_{j=1}^{\bar k-1}\alpha_j^2) + \beta^2 =1}}
\frac{\sum_{j=1}^{\bar k-1}\alpha_j^2\lambda_j^0
+ \beta^2 \int_{\Omega^\eps}\abs{\nabla \widetilde u_{\eps,R}}^2 + 2
\sum_{j=1}^{\bar k-1}\alpha_j\beta
\int_{\Omega^\eps}
\nabla v_j^0\cdot\nabla\widetilde u_{\eps,R}}{1+ o(\eps^N)}
\end{align*}
in view of the estimate 
\[
\bigg|\int_{\Omega^\eps_{1/2}}p v_j^0 \widetilde u_{\eps,R}\bigg|\leq
\nor{p}_{L^{N/2}(\R^N)} \nor{v_j^0}_{L^{2^*}(\R^N)} \nor{\widetilde u_{\eps,R}}_{L^{2^*}(\Omega^\eps_{1/2})}
=o(\eps^N)
\]
which holds by Lemma \ref{l:stimaeste} and Sobolev inequality. 
Then 
\begin{align}\label{eq:9}
& \lambda_\eps - \lambda_0 \\
\notag\leq&
\!\!\!\!\max_{\substack{(\alpha_1,\dots, \alpha_{\bar k-1},\beta)\\
(\sum_{j=1}^{\bar k-1}\alpha_j^2) + \beta^2 =1}}
 \!\!\!\!\!\bigg\{\!\textstyle\sum\limits_{j=1}^{\bar k-1}\alpha_j^2 \lambda_j^0
\!+\! \beta^2 \!\int_{\Omega^\eps}\abs{\nabla \widetilde u_{\eps,R}}^2 \!+ \!2
\sum\limits_{j=1}^{\bar k-1}\alpha_j\beta
\int_{\Omega^\eps}
\nabla v_j^0\cdot\nabla\widetilde u_{\eps,R}- \Big(\sum\limits_{j=1}^{\bar k-1}\alpha_j^2 +\beta^2\Big)\lambda_0
\bigg\} \\
&\notag\qquad\qquad+ o(\eps^N)  \\
\notag=&
\!\!\!\!\max_{\substack{(\alpha_1,\dots, \alpha_{\bar k-1},\beta)\\
(\sum_{j=1}^{\bar k-1}\alpha_j^2) + \beta^2 =1}}
 \!\!\!\!\!\bigg\{\!\textstyle\sum\limits_{j=1}^{\bar k-1}\alpha_j^2 (\lambda_j^0-\lambda_0)
\!+\! \beta^2 \big( \int_{\Omega^\eps}\!\abs{\nabla \widetilde u_{\eps,R}}^2\! \!-\!\! \int_{D^+}\abs{\nabla u_0}^2 \big)\!+ 2
\sum\limits_{j=1}^{\bar k-1}\alpha_j\beta
\int_{\Omega^\eps}\!
\nabla v_j^0\!\cdot\!\nabla\widetilde u_{\eps,R}\bigg\}  \\
&\notag\qquad\qquad+ o(\eps^N).
\end{align}
We observe that, since $\lambda_0$ is simple, for all $i=1,\ldots,\bar k-1$ 
\begin{equation}\label{primo_coeff_sotto} 
a_i:=\lambda_i^0-\lambda_0 <0.
\end{equation}
From convergences \eqref{x_1} and \eqref{v_k} established
in Lemma \ref{blow_up_vari} and Remark \ref{rem:hdiv}, it follows that 
\begin{align}\label{eq:8}
b_{\eps,R}:&=\int_{\Omega^\eps}\abs{\nabla \widetilde u_{\eps,R}}^2dx-
\int_{D^+}\abs{\nabla u_0}^2dx
= \int_{\Omega^\eps_{1+R\eps}}\abs{\nabla \widetilde u_{\eps,R}}^2dx-  \int_{B_{R\eps}^+}\abs{\nabla u_0}^2dx\\
\notag&= \int_{\Gamma_{R\eps}^+} u_0 \bigg(\frac{\partial \widetilde
  u_{\eps,R}}{\partial \nu} - 
\frac{\partial u_0}{\partial \nu} \bigg)\,d\sigma\\
\notag&= \eps^{N-1} \int_{\Gamma_{R}^+} u_0(\e + \eps(x-\e))
\bigg(\dfrac{\partial \widetilde u_{\eps,R}}{\partial \nu} -
\dfrac{\partial u_0}{\partial \nu} \bigg)
(\e + \eps(x-\e))\,d\sigma(x)\\
\notag&= \eps^{N} \int_{\Gamma_{R}^+} u_{0,\eps}(x)
\bigg(\dfrac{\partial V_\eps^R}{\partial \nu} - \dfrac{\partial u_{0,\eps}}{\partial \nu}\bigg)(x)\,d\sigma(x) \\
\notag&= \eps^{N} (b_R+o(1)),\quad\text{as }\eps\to0^+,
\end{align}
where
\begin{equation}\label{eq:bR}
b_R=\bigg(\frac{\partial u_0}{\partial x_1}(\e)\bigg)^{\!\!2}\int_{\Gamma_{R}^+} (x_1-1)
\bigg(\dfrac{\partial v_R}{\partial \nu} - \dfrac{\partial
  (x_1-1)}{\partial \nu}\bigg)(x)\,d\sigma(x)
\end{equation}
For every $i=1,\ldots,\bar k-1$ let us denote 
\[
 c_{\eps,R}^i=\int_{\Omega^\eps} \nabla v_i^0(x)\cdot\nabla\widetilde
  u_{\eps,R}(x)\,dx.
\]
In view of the orthogonality in $D^+\cup D^-$ between $v_j^0$ and $u_0$
\begin{align}\label{eq:12}
 c_{\eps,R}^i &= \int_{D^+\cup D^-} \nabla v_i^0(x)\cdot\nabla\widetilde
  u_{\eps,R}(x)\,dx \\
\notag&= \int_{D^-\cup B_{R\eps}^+} \nabla v_i^0(x)\cdot\nabla\widetilde
  u_{\eps,R}(x)\,dx
+ \int_{D^+ \setminus B_{R\eps}^+} \nabla v_i^0(x)\cdot\nabla u_0(x)\,dx  \\
\notag&= \int_{D^-} \nabla v_i^0(x)\cdot\nabla\widetilde
  u_{\eps,R}(x)\,dx + \int_{B_{R\eps}^+} \nabla v_i^0(x)\cdot\nabla\widetilde
  u_{\eps,R}(x)\,dx - \int_{B_{R\eps}^+} \nabla v_i^0(x)\cdot\nabla u_0(x)\,dx \\
\notag &= O(\eps^N)\quad\text{as }\eps\to 0^+,
\end{align}
taking into account Lemma \ref{l:stimaeste} and the fact that 
\[ 
\int_{B_{R\eps}^+} \abs{\nabla \widetilde
  u_{\eps,R}(x)}^2 dx
\leq \int_{\Omega^{\eps}_{1+R\eps}} \abs{\nabla \widetilde
  u_{\eps,R}(x)}^2 dx
\leq \int_{B_{R\eps}^+} \abs{\nabla u_0(x)}^2 dx
\]
by Dirichlet Principle and  \eqref{u_0_entra_nel_canale}.

We claim that 
\begin{equation}\label{eq:11}
\max_{\substack{(\alpha_1,\dots, \alpha_{\bar k-1},\beta)\\
(\sum_{j=1}^{\bar k-1}\alpha_j^2) + \beta^2 =1}}
 \bigg\{\sum_{j=1}^{\bar k-1}\alpha_j^2 a_j
\!+\! \beta^2 b_{\eps,R}+ 2
\sum\limits_{j=1}^{\bar k-1}\alpha_j\beta c_{\eps,R}^j\bigg\} =\eps^N(b_R+o(1)).
\end{equation}
To prove \eqref{eq:11}, let $\beta_\eps\in\R$, $\alpha_{j,\eps}\in\R$,
$j=1,\dots,\bar k-1$, be such that $\sum_{j=1}^{\bar k-1}\alpha_{j,\eps}^2 + \beta_\eps^2 =1$
and 
\begin{equation}\label{eq:maxass}
\sum_{j=1}^{\bar k-1}\alpha_{j,\eps}^2 a_j
\!+\! \beta_\eps^2 b_{\eps,R}+ 2
\sum\limits_{j=1}^{\bar k-1}\alpha_{j,\eps}\beta_\eps c_{\eps,R}^j=\max_{\substack{(\alpha_1,\dots, \alpha_{\bar k-1},\beta)\\
(\sum_{j=1}^{\bar k-1}\alpha_j^2) + \beta^2 =1}}
 \bigg\{\sum_{j=1}^{\bar k-1}\alpha_j^2 a_j
\!+\! \beta^2 b_{\eps,R}+ 2
\sum\limits_{j=1}^{\bar k-1}\alpha_j\beta c_{\eps,R}^j\bigg\}.
\end{equation}
We first prove that 
\begin{equation}\label{eq:14}
\beta_\eps=1+o(\eps^N),\quad\text{as }\eps\to 0^+.
\end{equation}
Indeed from 
\begin{equation}\label{eq:absu}
b_{\eps,R}\leq \sum_{j=1}^{\bar k-1}\alpha_{j,\eps}^2 a_j
\!+\! \beta_\eps^2 b_{\eps,R}+ 2
\sum\limits_{j=1}^{\bar k-1}\alpha_{j,\eps}\beta_\eps c_{\eps,R}^j,
\end{equation}
\eqref{eq:8}, and \eqref{eq:12}, it follows that 
\[
\eps^{N} (b_R+o(1)) (1-\beta_\eps^2)\leq (1-\beta_\eps^2)\max_{i=1,\dots,\bar
  k-1}a_i+O(\eps^N)
\]
which implies that $1-\beta_\eps^2=O(\eps^N)$. Assuming by
contradiction that \eqref{eq:14} does not hold, there should exists a
sequence $\eps_n\to 0^+$ such that
$\lim_{n\to\infty}\eps_n^{-N}(1-\beta_{\eps_n}^2)=\ell\in(0,+\infty)$.
Then, up to subsequences, there would exist $L<0$ such that
$\lim_{n\to\infty}\eps_n^{-N} \sum_{j=1}^{\bar k-1}\alpha_{j,\eps_n}^2
a_j=L$. Therefore \eqref{eq:absu} and \eqref{eq:8} would imply
\[
\eps_n^N(b_R+o(1))\leq \eps_n^N(L+o(1))+
\eps_n^N(b_R+o(1))(1-\ell\eps_n^N+o(\eps_n^N))+o(\eps_n^N),\quad\text{as
}n\to\infty,
\]
i.e. $b_R+o(1)\leq L+b_R+o(1)$ as $n\to\infty$, thus contradicting
$L<0$. Estimate \eqref{eq:14} is thereby proved.

From \eqref{eq:14} we deduce that 
$\alpha_{j,\eps}=o(\eps^{N/2})$ as $\eps\to 0^+$, then from
\eqref{eq:8}, \eqref{eq:14}, \eqref{eq:12}, and \eqref{eq:maxass} it follows that 
\begin{multline*}
\max_{\substack{(\alpha_1,\dots, \alpha_{\bar k-1},\beta)\\
(\sum_{j=1}^{\bar k-1}\alpha_j^2) + \beta^2 =1}}
 \bigg\{\sum_{j=1}^{\bar k-1}\alpha_j^2 a_j
\!+\! \beta^2 b_{\eps,R}+ 2
\sum\limits_{j=1}^{\bar k-1}\alpha_j\beta c_{\eps,R}^j\bigg\} \\
=
\eps^N(1+o(\eps^N))(b_R+o(1))+o(\eps^N)
=\eps^N(b_R+o(1)),\quad\text{as }\eps\to0^+,
\end{multline*}
thus proving claim \eqref{eq:11}.
From \eqref{eq:9} and \eqref{eq:11}, we deduce that 
\[
\lambda_\eps - \lambda_0\leq \eps^N(b_R+o(1)) ,\quad\text{as
}\eps\to0^+,
\]
and hence, 
for every $R>2$, 
\[
\frac{\lambda_0 - \lambda_\eps}{\eps^N}\geq K_1(\eps,R),
\]
where, for every $R>2$, 
\begin{equation*}
\lim_{\eps\to 0^+}K_1(\eps,R)=
\bigg(\frac{\partial u_0}{\partial x_1}(\e)\bigg)^{\!\!2}\int_{\Gamma_{R}^+} (x_1-1)
\bigg(\dfrac{\partial
  (x_1-1)}{\partial \nu}-\dfrac{\partial v_R}{\partial \nu}\bigg)(x)\,d\sigma(x).
\end{equation*}
{\bf Step 2:} estimate from above.
By the Courant-Fisher \emph{minimax characterization} of the eigenvalue
 $\lambda_0=\lambda_{\bar k}(D^+ \cup D^-)$, we have that
\begin{equation}\label{eq:10}
  \lambda_0 = \min\bigg\{\max_{u\in F\setminus \{0\}}
\dfrac{\int_{D^+ \cup D^-} \abs{\nabla u}^2dx}{\int_{D^+ \cup D^-}
  pu^2\,dx}:F \text{ is a subspace of $\Di{D^+ \cup D^-}$, $\dim F= \bar k$}\bigg\}.
\end{equation}
Let $R>2$ and $\eta_{\eps,R}$ be a smooth cut-off function such that $\eta_{\eps,R}\equiv 1$ in
 $(D^+\setminus B_{R\eps}^+)\cup(D^-\setminus B_{R\eps}^-)$, $\eta_{\eps,R}\equiv 0$ in
 $B_{(R/2)\eps}^+ \cup B_{(R/2)\eps}^-$, $0\leq\eta_{\eps,R}\leq1$ and $|\nabla \eta_{\eps,R}|\leq
 4/(R\eps)$  in
 $D^-\cup D^+$.
We choose the $\bar k$-dimesional space $F=
\mathop{\rm span} \{ \eta_{\eps,R} v_1^\eps,\ldots,\eta_{\eps,R} v_{\bar k-1}^\eps, \widehat u_{\eps,R} \}$ in
\eqref{eq:10}. Then
\begin{align*}
  \lambda_0 &\leq \max_
  {\substack{(\alpha_1,\dots, \alpha_{\bar k})\in \R^{\bar k}\\
      \sum_{j=1}^{\bar k}\alpha_j^2 =1}}
  \dfrac{\int_{D^+ \cup D^-} |\nabla (\sum_{j=1}^{\bar k-1}\alpha_j
    \eta_{\eps,R} v_j^\eps + \alpha_{\bar k}\widehat u_{\eps,R})|^2dx}
  {\int_{D^+\cup D^-} p (\sum_{j=1}^{\bar k-1}\alpha_j \eta_{\eps,R}
    v_j^\eps + \alpha_{\bar k}\widehat u_{\eps,R})^2dx}.
\end{align*}
We notice that 
\begin{align*}
&\int_{D^+ \cup D^-} \bigg|\nabla \bigg(\sum_{j=1}^{\bar k-1}\alpha_j
    \eta_{\eps,R} v_j^\eps + \alpha_{\bar k}\widehat
    u_{\eps,R}\bigg)\bigg|^2dx\\
&=
\sum\limits_{j=1}^{\bar k -1}\alpha_j^2 \int_{D^+\cup D^-} |\nabla (\eta_{\eps,R} v_j^\eps)|^2 dx
+ \alpha_{\bar k}^2 \int_{D^+} |\nabla \widehat u_{\eps,R}|^2dx\\
&\quad+ 
\sum\limits_{\substack{i,j<\bar k\\i\neq j}}\alpha_i\alpha_j
\int_{D^+\cup D^-}
\nabla {(\eta_{\eps,R} v_i^\eps)}\cdot
\nabla {(\eta_{\eps,R} v_j^\eps)}dx
+ 
\sum\limits_{\substack{j=1}}^{\bar k-1}\alpha_j\alpha_{\bar k}
\int_{D^+}
\nabla {(\eta_{\eps,R} v_j^\eps)}\cdot
\nabla {\widehat u_{\eps,R}}dx,
\end{align*}
while, from Lemma \ref{lemma_zero}, Corollary \ref{motivazione_O_grande}, 
assumption \eqref{eq:p2}, and Corollary \ref{stime_integrali_vjeps} it
follows that 
\begin{align*}
\int_{D^+\cup D^-} &p \bigg(\sum_{j=1}^{\bar k-1}\alpha_j \eta_{\eps,R}
    v_j^\eps + \alpha_{\bar k}\widehat u_{\eps,R}\bigg)^{\!\!2}dx= 
\sum\limits_{j=1}^{\bar k -1}\alpha_j^2 \int_{D^+\cup D^-} p (\eta_{\eps,R} v_j^\eps)^2 dx
+ \alpha_{\bar k}^2 \int_{D^+\cup D^-} p {\widehat u_{\eps,R}}^2dx\\
&\ + 
\sum\limits_{\substack{i,j<\bar k\\i\neq j}}\alpha_i\alpha_j
\int_{D^+\cup D^-}
p\,\eta_{\eps,R}^2 v_i^\eps v_j^\eps dx
+ 
\sum\limits_{\substack{j=1}}^{\bar k-1}\alpha_j\alpha_{\bar k}
\int_{D^+\cup D^-}
p\,\eta_{\eps,R} v_j^\eps \widehat u_{\eps,R}\,dx \\
& = \sum\limits_{j=1}^{\bar k -1}\alpha_j^2 \bigg( 1 +
  \int_{B_{R\eps}^-\cup \mathcal C_\eps} p (\eta_{\eps,R}^2-1) 
| v_j^\eps|^2dx \bigg) 
+ \alpha_{\bar k}^2 \bigg(1 - \int_{\Omega^\eps_{1+R\eps}} p u_\eps^2dx\bigg)\\
&\ + 
\sum\limits_{\substack{i,j<\bar k\\i\neq j}}\alpha_i\alpha_j
\int_{B_{R\eps}^-\cup \mathcal C_\eps}
p\,(\eta_{\eps,R}^2-1) v_i^\eps v_j^\eps \,dx-
\sum\limits_{\substack{j=1}}^{\bar k-1}\alpha_j\alpha_{\bar k}
\int_{\Omega^{\eps}_{1/2}}
p\, v_j^\eps u_\eps\,dx\\
& = 1 + o(\eps^N), \quad\text{as }\eps\to 0^+.
\end{align*}
Then
\begin{equation}\label{eq:9bis}
  \lambda_0 - \lambda_\eps \leq  
\max_{\substack{(\alpha_1,\dots, \alpha_{\bar k})\in \R^{\bar k}\\
      \sum_{j=1}^{\bar k}\alpha_j^2 =1}}
\Bigg\{ \sum_{j=1}^{\bar k} \alpha_j^2 \,a_{j,R}^\eps +
\sum_{\substack{i,j=1\\i\neq j}}^{\bar k} \alpha_i\alpha_j\,
c_{i,j,R}^\eps 
\Bigg\}
+ o(\eps^N)
\end{equation}
where we have set
\begin{align*}
& a_{\bar k,R}^\eps=\int_{D^+} |\nabla \widehat u_{\eps,R}|^2dx
- \int_{\Omega^\eps}\abs{\nabla {u_\eps}}^2dx, \\
& a_{j,R}^\eps= \int_{D^+\cup D^-} |\nabla {(\eta_{\eps,R} {v}_{j}^{\eps})}|^2dx
-\int_{\Omega^\eps} |\nabla  u_{\eps}|^2dx, \quad \text{for every }j=1,\ldots,\bar k-1,\\
& c_{i,j,R}^\eps=\int_{D^+\cup D^-}
\nabla {(\eta_{\eps,R} v_i^\eps)}\cdot
\nabla {(\eta_{\eps,R} v_j^\eps)} \,dx,\quad \text{for every
  $i,j=1,\ldots,\bar k-1$, $i\neq j$},\\
& c_{j,\bar k,R}^\eps=c_{\bar k,j,R}^\eps=\int_{D^+}
\nabla {(\eta_{\eps,R} v_j^\eps)}\cdot
\nabla {\widehat u_{\eps,R}} \,dx,\quad \text{for every }j=1,\ldots,\bar k-1.
\end{align*}
Let us study each coefficient of the quadratic form above.
From \eqref{eq:19}, \eqref{eq:20}, and Corollary
\ref{motivazione_O_grande}, we have that 
\begin{align*}
a_{\bar k,R}^\eps&=-\int_{\Omega^\eps_{1+R\eps}}|\nabla u_\eps|^2dx+
 \int_{B_{R\eps}^+}|\nabla \widehat u_{\eps,R}|^2dx\\
&= \int_{\Gamma_{R\eps}^+} u_\eps \bigg( \frac{\partial \widehat u_{\eps,R}}{\partial\nu} 
- \frac{\partial u_\eps}{\partial\nu} \bigg)d\sigma
-\lambda_\eps\int_{\Omega^\eps_{1/2}}p u_\eps^2dx
 \\
&= \eps^N \int_{\Gamma_{R}^+} U_\eps\bigg( \frac{\partial{Z^R_\eps}}{\partial\nu} 
- \frac{\partial U_\eps}{\partial\nu}
\bigg)\,d\sigma(\theta)+o(\eps^N)\quad\text{as }\eps\to0^+,
\end{align*}
and hence, in view of Lemma \ref{blow_up_vari} and Remark \ref{rem:hdiv},
\begin{equation}\label{secondo_coeff_sopra}
a_{\bar k,R}^\eps=\eps^N(a_R+o(1)), \quad \text{as }\eps\to0^+,
\end{equation}
where 
\begin{equation}\label{eq:aR}
a_R=\bigg(\frac{\partial u_0}{\partial x_1}(\e)\bigg)^{\!\!2}\int_{\Gamma_{R}^+} \Phi(x)
\bigg(\dfrac{\partial z_R}{\partial \nu}-\dfrac{\partial
  \Phi}{\partial \nu}\bigg)(x)\,d\sigma(x).
\end{equation}
For every $j=1,\ldots,\bar k-1$, in view of estimates
\eqref{vjeps_unif_lim} and \eqref{vjeps_grad_bucur}  of Lemma
\ref{controllo_supersol}, we have that 
\begin{align}\label{eq:3}
&\int_{B_{R\eps}^-\cup B_{R\eps}^+} \abs{\nabla (\eta_{\eps,R} v_j^\eps) }^2dx \leq 2
 \int_{B_{R\eps}^+\setminus B_{(R/2)\eps}^+}|\nabla \eta_{\eps,R}|^2 |v_{j}^{\eps}|^2dx +\
 \int_{B_{R\eps}^+\setminus B_{(R/2)\eps}^+}\eta_{\eps,R}^2|\nabla
 v_{j}^{\eps}|^2dx\\
&\notag+ 2
 \int_{B_{R\eps}^-\setminus B_{(R/2)\eps}^-}|\nabla \eta_{\eps,R}|^2 |v_{j}^{\eps}|^2dx +\
 \int_{B_{R\eps}^-\setminus B_{(R/2)\eps}^-}\eta_{\eps,R}^2|\nabla
 v_{j}^{\eps}|^2dx
=O(\eps^{N-2}) ,\quad
\text{as }\eps\to 0^+.
\end{align}
From \eqref{eq:3}, estimate
\eqref{vjeps_grad_bucur}  of Lemma \ref{controllo_supersol} and
estimates 
\eqref{stima_int_mezzapalletta_vjeps} and \eqref{stima_int_canale_vjeps}
of Corollary \ref{stime_integrali_vjeps} we deduce that
\begin{align}\label{primo_coeff_sopra}
  a_{j,R}^\eps &= \lambda_j^\eps -\lambda_\eps -
  \int_{B_{R\eps}^-\cup\mathcal C_\eps\cup B_{R\eps}^+} \abs{\nabla
    v_j^\eps }^2dx
  + \int_{B_{R\eps}^-\cup B_{R\eps}^+} \abs{\nabla (\eta_{\eps,R} v_j^\eps) }^2dx \\
  \notag& = \lambda_j^\eps -\lambda_\eps -\lambda_j^\eps
  \int_{B_{R\eps}^-\cup\mathcal C_\eps\cup B_{R\eps}^+} p {v_j^\eps}^2 dx-\int_{\Gamma_{R\eps}^+\cup \Gamma_{R\eps}^-} {v_j^\eps}
  \dfrac{\partial {v_j^\eps}}{\partial \nu}\,d\sigma
  +O(\eps^{N-2})\nonumber \\
  & = \lambda_j^\eps -\lambda_\eps + O(\eps^{N-2})= \lambda_j^0 - \lambda_0 +o(1), \quad \text{as }\eps \to
  0^+. \nonumber
\end{align}
For every $i,j=1,\ldots,\bar k-1$ such that $i\neq j$, from
\eqref{eq:3}, estimates
\eqref{vjeps_bucur} and \eqref{vjeps_grad_bucur} of Lemma \ref{controllo_supersol} and the
orthogonality of $v_i^\eps$ and $v_j^\eps$ in
$\Di{\Omega^\eps}$, it follows that
\begin{align}\label{coeff_misto_sopra}
 c_{i,j,R}^\eps &= - \int_{B_{R\eps}^-\cup\mathcal C_\eps\cup B_{R\eps}^+}\nabla v_i^\eps \cdot \nabla v_j^\eps\,dx 
+  \int_{B_{R\eps}^-\cup B_{R\eps}^+}\nabla (\eta_\eps v_i^\eps) \cdot
\nabla (\eta_\eps v_j^\eps)\,dx \\
&\notag= - \int_{B_{R\eps}^-\cup\mathcal C_\eps\cup B_{R\eps}^+} p\,v_i^\eps v_j^\eps\,dx-
 \int_{\Gamma_{R\eps}^+ \cup \Gamma_{R\eps}^-} v_i^\eps
 \dfrac{\partial v_j^\eps}{\partial \nu}\,d\sigma+ O(\eps^{N-2})\\
& = O(\eps^{N-2}) \quad \text{as }\eps \to 0. \nonumber
\end{align}
From \eqref{eq:min1}, \eqref{v_j^eps_zero_al_giunto}, and \cite[Lemma 2.10 and 2.11]{FT12},
 we have that
\begin{multline}\label{eq:17}
 \int_{B_{R\eps}^+}| \nabla \widehat u_{\eps,R}|^2dx=
 \int_{B_{R\eps}^+}| \nabla \bar v_{\eps,R}|^2dx
\leq 
 \int_{B_{R\eps}^+}|\nabla(\eta_{\eps,R} u_{\eps})|^2dx\\\leq 2
 \int_{B_{R\eps}^+\setminus B_{(R/2)\eps}^+}|\nabla \eta_{\eps,R}|^2 |u_{\eps}|^2dx +2
 \int_{B_{R\eps}^+\setminus B_{(R/2)\eps}^+}\eta_{\eps,R}^2|\nabla u_{\eps}|^2dx=O(\eps^N) ,\quad
\text{as }\eps\to 0^+.
\end{multline}
From \eqref{eq:3}, \eqref{eq:17}, \eqref{vjeps_unif_lim},
\eqref{vjeps_grad_bucur}, and \cite[Lemma 2.10]{FT12} (which in
particular yields $\sup_{\Gamma_{R\eps}^+}|u_\eps|=O(\eps)$ as $\eps\to0^+$),
it follows that, for every $j=1,\ldots,\bar k-1$,
\begin{align}\label{coeff_misto_sopra_bark}
 c_{j,\bar k,R}^\eps & = \int_{B_{R\eps}^+}\nabla (\eta_\eps v_j^\eps) \cdot \nabla \widehat u_{\eps,R}\,dx
- \int_{\Omega^\eps_{1+R\eps}}\nabla v_j^\eps \cdot \nabla u_{\eps}\,dx\\
& = O(\eps^{(N-2)/2})  O(\eps^{N/2})
- \lambda_j^\eps\int_{\Omega^\eps_{1/2}} p v_j^\eps \,u_\eps\, dx 
- \int_{\Gamma_{R\eps}^+ }u_\eps \dfrac{\partial v_j^\eps}{\partial \nu}\,d\sigma \nonumber \\
& = O(\eps^{N-1}) \quad \text{as }\eps \to 0^+. \nonumber
\end{align}
We claim that 
\begin{equation}\label{eq:11bis}
\max_{\substack{(\alpha_1,\dots, \alpha_{\bar k})\in \R^{\bar k}\\
      \sum_{j=1}^{\bar k}\alpha_j^2 =1}}
 \bigg\{
\sum_{j=1}^{\bar k}a_{j,R}^\eps\alpha_j^2
+ 
\sum\limits_{\substack{i,j=1 \\i\neq j}}^{\bar k}c_{i,j,R}^\eps\alpha_i\alpha_j
\bigg\} =\eps^N(a_R+o(1)),
\end{equation}
as $\eps\to0^+$.
To prove \eqref{eq:11bis}, let $\alpha_{j,\eps}\in\R$,
such that $\sum_{j=1}^{\bar k}\alpha_{j,\eps}^2=1$
and 
\begin{equation}\label{eq:maxassbis}
  \sum_{j=1}^{\bar k}a_{j,R}^\eps\alpha_{j,\eps}^2
  + 
  \sum\limits_{\substack{i\neq j}}c_{i,j,R}^\eps\alpha_{i,\eps}\alpha_{j,\eps}
  =
  \max_{\substack{(\alpha_1,\dots, \alpha_{\bar k})\in \R^{\bar k}\\
      \sum_{j=1}^{\bar k}\alpha_j^2 =1}}
  \bigg\{
  \sum_{j=1}^{\bar k}a_{j,R}^\eps\alpha_j^2
  + 
  \sum\limits_{\substack{i\neq j}}c_{i,j,R}^\eps\alpha_i\alpha_j
  \bigg\}.
\end{equation}
From 
\begin{equation*}
a_{\bar k,R}^\eps\leq 
\sum_{j=1}^{\bar k}a_{j,R}^\eps\alpha_{j,\eps}^2
+ 
\sum\limits_{\substack{i\neq j}}c_{i,j,R}^\eps\alpha_{i,\eps}\alpha_{j,\eps},
\end{equation*}
it follows that 
\begin{equation*}
  \Big(-\max_{j<\bar k}a_{j,R}^\eps+a_{\bar
    k,R}^\eps\Big)(1-\alpha_{\bar k,\eps}^2)\leq \sum\limits_{\substack{i\neq j}}c_{i,j,R}^\eps\alpha_{i,\eps}\alpha_{j,\eps},
\end{equation*}
and hence, in view of 
\eqref{secondo_coeff_sopra} and \eqref{primo_coeff_sopra},
\begin{equation}\label{eq:absubis}
  \Big(-\max_{j<\bar k}(\lambda_j^0-\lambda_0)+o(1)\Big)(1-\alpha_{\bar k,\eps}^2)\leq \sum\limits_{\substack{i\neq j}}c_{i,j,R}^\eps\alpha_{i,\eps}\alpha_{j,\eps}.
\end{equation}
From \eqref{eq:absubis}, 
\eqref{coeff_misto_sopra}, and \eqref{coeff_misto_sopra_bark}, it follows that 
\begin{equation}\label{alpha_bark_eps}
1-\alpha_{\bar k,\eps}^2=O(\eps^{N-2}), \quad \text{as }\eps \to 0^+.
\end{equation}
Since $1-\alpha_{\bar k,\eps}^2= \sum_{j<\bar k} \alpha_{j,\eps}^2$, we obtain that,
for every $j=1,\ldots,\bar k-1$,
\begin{equation}\label{alpha_j_eps}
 \alpha_{j,\eps}^2=O(\eps^{N-2}), \quad \text{as }\eps \to 0^+.
\end{equation}
From \eqref{coeff_misto_sopra}, \eqref{alpha_j_eps},
\eqref{coeff_misto_sopra_bark} and \eqref{alpha_bark_eps}, it follows
that
\begin{equation}\label{pezzo_misto}
 \sum\limits_{\substack{i\neq j}}c_{i,j,R}^\eps\alpha_{i,\eps}\alpha_{j,\eps} 
= O(\eps^{2(N-2)}) + O(\eps^{\frac{N-2}{2}+N-1}) 
= 
\begin{cases}
 O(\eps^N), &\text{if }N\geq 4, \\
 O(\eps^{2}),  &\text{if }N=3,
\end{cases}
\quad \text{as }\eps \to 0^+.
\end{equation}
In the case $N=3$,~\eqref{eq:absubis} and \eqref{pezzo_misto} imply
that $1-\alpha_{\bar k,\eps}^2=O(\eps^2)$ as $\eps\to 0^+$, hence for
all $j\neq\bar k$ $\alpha_{j,\eps}=O(\eps)$. Therefore, in view of
\eqref{coeff_misto_sopra} and 
\eqref{coeff_misto_sopra_bark}, we obtain that $\sum_{i\neq j}c_{i,j,R}^\eps\alpha_{i,\eps}\alpha_{j,\eps} 
= O(\eps^{3})$ as $\eps\to 0^+$, thus improving estimate
\eqref{pezzo_misto} for $N=3$. Then, 
for any dimension $N\geq 3$ we obtain
\begin{equation}\label{pezzo_misto_definitivo}
  \sum\limits_{\substack{i\neq j}}c_{i,j,R}^\eps\alpha_{i,\eps}\alpha_{j,\eps} 
  = O(\eps^{N}), \quad \text{as }\eps \to 0^+.
\end{equation}
Arguing a third time in the same way, from \eqref{eq:absubis} and the
improved estimate \eqref{pezzo_misto_definitivo} on the mixed terms,
we can improve \eqref{alpha_bark_eps} and \eqref{alpha_j_eps} obtaining
\begin{equation}\label{alpha_bark_eps_1}
1-\alpha_{\bar k,\eps}^2=O(\eps^{N}),\quad
\alpha_{j,\eps}^2=O(\eps^{N})\text{ for all $j<\bar k$}, \quad \text{as }\eps \to 0^+.
\end{equation}
From \eqref{eq:absubis}, \eqref{coeff_misto_sopra},
\eqref{coeff_misto_sopra_bark}, and \eqref{alpha_bark_eps_1}, we thereby obtain
\begin{align}\label{eq:47}
  \Big(-\max_{j<\bar
    k}(\lambda_j^0-\lambda_0)+o(1)\Big)(1-\alpha_{\bar k,\eps}^2)&\leq
  \sum\limits_{\substack{i\neq
      j}}c_{i,j,R}^\eps\alpha_{i,\eps}\alpha_{j,\eps}\\
\notag& = \sum\limits_{\substack{i\neq j\\ i,j\neq\bar k}}c_{i,j,R}^\eps\alpha_{i,\eps}\alpha_{j,\eps} 
+ \sum\limits_{\substack{j\neq \bar k}}c_{j,\bar k,R}^\eps\alpha_{j,\eps}\alpha_{\bar k,\eps} \\
\notag& = O(\eps^{N-2+N}) + O(\eps^{N-1+(N/2)})\\
\notag& = o(\eps^N), \quad \text{as }\eps \to 0^+,
\end{align}
thus implying 
\begin{equation}\label{alpha_bark_eps_definitivo} 
1-\alpha_{\bar k,\eps}^2=o(\eps^{N}),\quad
\alpha_{j,\eps}^2=o(\eps^{N})\text{ for all $j<\bar k$}, \quad
\text{as }\eps \to 0^+.
\end{equation}
From \eqref{secondo_coeff_sopra}, \eqref{alpha_bark_eps_definitivo},
\eqref{primo_coeff_sopra}, and \eqref{eq:47}, it follows that 
\begin{equation*}
\max_{\substack{(\alpha_1,\dots, \alpha_{\bar k})\in \R^{\bar k}\\
      \sum_{j=1}^{\bar k}\alpha_j^2 =1}}
  \bigg\{
  \sum_{j=1}^{\bar k}a_{j,R}^\eps\alpha_j^2
  + 
  \sum\limits_{\substack{i\neq j}}c_{i,j,R}^\eps\alpha_i\alpha_j
  \bigg\}
 = \eps^N (a_R + o(1)) (1+o(\eps^N))+o(\eps^N), 
\quad \text{as }\eps \to 0^+, 
\end{equation*}
thus proving claim \eqref{eq:11bis}.
From \eqref{eq:9bis} and \eqref{eq:11bis}, we deduce that 
\[
\lambda_0-\lambda_\eps \leq \eps^N(a_R+o(1)) ,\quad\text{as
}\eps\to0^+,
\]
and hence, 
for every $R>2$, 
\[
\frac{\lambda_0 - \lambda_\eps}{\eps^N}\leq K_2(\eps,R),
\]
where, for every $R>2$, 
\begin{equation*}
\lim_{\eps\to 0^+}K_2(\eps,R)=\bigg(\frac{\partial u_0}{\partial x_1}(\e)\bigg)^{\!\!2}\int_{\Gamma_{R}^+} \Phi(x)
\bigg(\dfrac{\partial z_R}{\partial \nu}-\dfrac{\partial
  \Phi}{\partial \nu}\bigg)(x)\,d\sigma(x).
\end{equation*}
{\bf Step 3:} asymptotic behavior. 
Up to now we have proved the following estimate
\begin{equation}\label{estimate_below_above}
K_1(\eps,R)\leq \frac{\lambda_0 - \lambda_\eps}{\eps^N} \leq  
K_2(\eps,R)
\end{equation}
for any $R \in (2,+\infty)$, where 
\begin{equation*}
 \lim_{\eps\to 0^+}K_1(\eps,R)=-b_R,\quad 
 \lim_{\eps\to 0^+}K_2(\eps,R)=a_R,
\end{equation*}
with $b_R$ and  $a_R$ defined in \eqref{eq:bR} and \eqref{eq:aR}
respectively. We now claim that 
\begin{equation}\label{eq:limiteuguali}
  \lim_{R\to+\infty}(-b_R)= \lim_{R\to+\infty}a_R=\bigg(\frac{\partial u_0}{\partial x_1}(\e)\bigg)^{\!\!2} N \int_{\SN_+} \left(\Phi(\e+\theta) - \theta_1 \right)\theta_1 \,d\sigma(\theta).
\end{equation}
As far as  $-b_R$ is concerned, we first observe that 
\begin{equation*}
 \int_{\Gamma_R^+} (x_1-1)\dfrac{\partial (x_1-1)}{\partial\nu}\, d\sigma
= R^{N-1} \int_{\SN_+} R\theta_1^2\,d\sigma = R^N \Upsilon_N^2.
\end{equation*}
Therefore, from Lemma \ref{lemma_v_R}(ii), we have that 
\begin{align*}
  -b_R &= \bigg(\frac{\partial u_0}{\partial
    x_1}(\e)\bigg)^{\!\!2}\int_{\Gamma_{R}^+} (x_1-1)
  \bigg(\dfrac{\partial
    (x_1-1)}{\partial \nu} - \dfrac{\partial v_R}{\partial \nu} \bigg)(x)\,d\sigma(x) \\
  &=\bigg(\frac{\partial u_0}{\partial x_1}(\e)\bigg)^{\!\!2}
  \left(R^N \Upsilon_N^2 - \Upsilon_N
    \frac{\Upsilon_N (R^N+N-1)-N\chi_R(1)}{1-R^{-N}}\right)\\
  &=\bigg(\frac{\partial u_0}{\partial x_1}(\e)\bigg)^{\!\!2}
  \dfrac{N\Upsilon_N}{1-R^{-N}} (\chi_R(1)-\Upsilon_N) \\
  &=\bigg(\frac{\partial u_0}{\partial x_1}(\e)\bigg)^{\!\!2}
  \dfrac{N\Upsilon_N}{1-R^{-N}} \bigg(\int_{\SN_+}v_R(\mathbf
  e_1+\theta)\Psi^+(\theta)\,d\sigma(\theta)-\Upsilon_N\bigg).
\end{align*}
Therefore, from Lemma \ref{lemma_v_R}(i), it follows that
\begin{equation}\label{limite_bR}
  \lim_{R\to+\infty}(-b_R)= \bigg(\frac{\partial u_0}{\partial x_1}(\e)\bigg)^{\!\!2} N \int_{\SN_+} \left(\Phi(\e+\theta) - \theta_1 \right)\theta_1 \,d\sigma(\theta).
\end{equation}
Let us now  study  the limit of $a_R$ as $R\to+\infty$.
We split \eqref{eq:aR} as
\begin{multline}\label{eq:26}
 a_R =\bigg(\frac{\partial u_0}{\partial x_1}(\e)\bigg)^{\!\!2}\int_{\Gamma_{R}^+} \Phi
\bigg(\dfrac{\partial z_R}{\partial \nu}-\dfrac{\partial
  \Phi}{\partial \nu}\bigg)\,d\sigma \\
= \bigg(\frac{\partial u_0}{\partial x_1}(\e)\bigg)^{\!\!2} \bigg(\int_{\Gamma_{R}^+} (x_1-1)
\bigg(\dfrac{\partial z_R}{\partial \nu}-\dfrac{\partial
  \Phi}{\partial \nu}\bigg)d\sigma+ \int_{\Gamma_{R}^+} (\Phi-(x_1-1))
\bigg(\dfrac{\partial z_R}{\partial \nu}-\dfrac{\partial
  \Phi}{\partial \nu}\bigg)d\sigma\bigg).
\end{multline}
We first prove that the second term in the sum vanishes as $R\to\infty$. Indeed,
\begin{multline}\label{eq:27}
 \int_{\Gamma_{R}^+} (\Phi-(x_1-1))
\bigg(\dfrac{\partial z_R}{\partial \nu}-\dfrac{\partial
  \Phi}{\partial \nu}\bigg)d\sigma= \int_{\Gamma_{R}^+} (\Phi-(x_1-1))
\bigg(\dfrac{\partial (x_1-1)}{\partial \nu}-\dfrac{\partial
  \Phi}{\partial \nu}\bigg)\,d\sigma \\
+ \int_{\Gamma_{R}^+} (\Phi-(x_1-1))
\bigg(\dfrac{\partial z_R}{\partial \nu}-\dfrac{\partial
  (x_1-1)}{\partial \nu}\bigg)\,d\sigma.
\end{multline}
Testing  equation $-\Delta(\Phi(x)-(x_1-1)) = 0$ in $D^+ \setminus
B_R^+$ with $\Phi(x)-(x_1-1)$, we have that 
\begin{equation}\label{eq:28}
\int_{\Gamma_{R}^+} (\Phi(x)-(x_1-1))
\bigg(\dfrac{\partial (x_1-1)}{\partial \nu}-\dfrac{\partial
  \Phi}{\partial \nu}\bigg)(x)\,d\sigma(x)
=  \int_{D^+ \setminus B_R^+} \abs{\nabla (\Phi(x)-(x_1-1))}^2 \to 0 
\end{equation}
as $R\to+\infty$ thanks to \eqref{eq_Phi_1}.
On the other hand, testing the equation $-\Delta (z_R-(x_1-1))=0$ in $B_R^+$ with the function
$\eta_R (\Phi-(x_1-1))$ (being $\eta_R$ a cut-off function as in 
\eqref{eq:cutoff}), we have that 
\begin{align}\label{eq:29}
\lefteqn{ \int_{\Gamma_{R}^+} (\Phi(x)-(x_1-1))
\bigg(\dfrac{\partial z_R}{\partial \nu}-\dfrac{\partial
  (x_1-1)}{\partial \nu}\bigg)(x)\,d\sigma(x)}\\
\notag&= \int_{B_R^+} \nabla(z_R-(x_1-1))\cdot \nabla(\eta_R (\Phi-(x_1-1)))\,dx\\
\notag&\leq \left(\int_{B_R^+} \abs{\nabla(z_R-(x_1-1))}^2dx\right)^{1/2} 
\left(\int_{B_R^+} \abs{\nabla(\eta_R (\Phi-(x_1-1)))}^2dx\right)^{1/2} \\
\notag& \leq \int_{B_R^+} \abs{\nabla(\eta_R
  (\Phi-(x_1-1)))}^2=o(1)\quad\text{as }R\to +\infty
\end{align}
thanks to the Dirichlet Principle and estimate
\eqref{eq:25}. Therefore, from \eqref{eq:26}, \eqref{eq:27},
\eqref{eq:28}, \eqref{eq:29}, Lemmas \ref{lemma_z_R} and
\ref{lemma_Phi_1}, and the fact that, in view of \eqref{eq:z_R},
\eqref{eq:phi}, and \eqref{eq:varphi}, $\phi_R(R)=\varphi(R)$ for all
$R>2$, it follows that 
\begin{align}\label{limite_aR}
  a_R &= \bigg(\frac{\partial u_0}{\partial x_1}(\e)\bigg)^{\!\!2} \left(\int_{\Gamma_{R}^+} (x_1-1)
    \bigg(\dfrac{\partial z_R}{\partial \nu}-\dfrac{\partial
      \Phi}{\partial \nu}\bigg)(x)\,d\sigma(x) \right) + o(1) \\
  &= \bigg(\frac{\partial u_0}{\partial x_1}(\e)\bigg)^{\!\!2} N
  \Upsilon_N R^N \left(\dfrac{\varphi(R)}{R} -\Upsilon_N\right)+
  o(1)\nonumber \\
  &= \bigg(\frac{\partial u_0}{\partial x_1}(\e)\bigg)^{\!\!2} N
  \Upsilon_N (\varphi(1) -\Upsilon_N)+
  o(1)\nonumber \\
&= \bigg(\frac{\partial u_0}{\partial x_1}(\e)\bigg)^{\!\!2} N \int_{\SN_+} \left(\Phi(\e+\theta) - \theta_1 \right)\theta_1\,d\sigma+ o(1) \quad\text{as $R\to+\infty$.}\nonumber
\end{align}
Combining \eqref{limite_bR} and \eqref{limite_aR} we prove claim 
\eqref{eq:limiteuguali}. The conclusion follows from
\eqref{estimate_below_above} and \eqref{eq:limiteuguali}, observing
that 
$\int_{\SN_+} (\Phi(\e+\theta) - \theta_1)\theta_1\,d\sigma>0$ due
to the fact that, by the Strong
Maximum Principle, $\Phi>(x_1-1)^+$ in $D^+$.
\end{pf}

\noindent We are now ready to prove Theorem \ref{teo_asintotico_autovalori}.

\begin{pfn}{Theorem \ref{teo_asintotico_autovalori}}
From Lemma \ref{lemma_Phi_1} it follows that 
\begin{equation}\label{eq:32}
\int_{\SN_+} \left(\Phi(\e+\theta) - \theta_1 \right)\theta_1 \,d\sigma=
\frac1{1-N}  \int_{\Gamma_1^+}\frac{\partial
  (\Phi-(x_1-1))}{\partial\nu}(x_1-1)\,d\sigma.
\end{equation}
On the other hand, testing first equation $-\Delta (\Phi-(x_1-1))=0$
in $B^+_1$ with $(x_1-1)$ and then   equation $-\Delta (x_1-1)=0$
in $B^+_1$ with $\Phi-(x_1-1)$, we obtain that 
\begin{align}\label{eq:33}
  \int_{\Gamma_1^+}\frac{\partial
  (\Phi-(x_1-1))}{\partial\nu}(x_1-1)\,d\sigma&=\int_{B_1^+}\nabla(\Phi-(x_1-1))\cdot\nabla(x_1-1)\,dx\\
\notag&=
\int_{\SN_+} \left(\Phi(\e+\theta) - \theta_1 \right)\theta_1
\,d\sigma-\int_\Sigma \Phi(1,x')\,dx'.
\end{align}
Combining \eqref{eq:32} and \eqref{eq:33}, we deduce that 
\begin{equation}\label{eq:34}
  N \int_{\SN_+} \left(\Phi(\e+\theta) - \theta_1 \right)\theta_1 \,d\sigma
=\int_\Sigma \Phi(1,x')\,dx'.
\end{equation}
The conclusion follows from Theorem \ref{t:asintotico_autovalori1},
\eqref{eq:34}, Corollary \ref{cor:min_phi1}, and \eqref{eq:legame_m_c}.  
\end{pfn}

Steiner rearrangement allows proving that the shape of the section
$\Sigma$ minimizing $m(\Sigma)$ and hence maximizing $\lim_{\eps\to
  0^+}\eps^{-N}(\lambda_0 - \lambda_\eps)$ is the spherical one.
\begin{Proposition}\label{p:steiner}
For every $\Sigma\subset \R^{N-1}$ being an open bounded domain containing $0$, let $m(\Sigma)$ be defined in 
  \eqref{eq:m(sigma)}. Then, for every $\mu>0$, 
  \begin{equation*}
    \min\Big\{m(\Sigma):\Sigma \subset \R^{N-1} \text{ is a bounded
      domain}, \, 0\in\Sigma, \, |\Sigma|=\mu\Big\}
    =m\Big(B'\Big({\mathbf 0},\big(\tfrac{\mu(N-1)}{\omega_{N-2}}\big)^{\frac1{N-1}}\Big)\Big)
  \end{equation*}
where $|\cdot|$  denotes the Lebesgue measure of $\R^{N-1}$,
$B'({\mathbf 0},r):=\{x'\in\R^{N-1}:|x'|<r\}$
 denotes the
$(N-1)$-dimensional ball of radius $r$ centered at ${\mathbf 0}$, and 
$\omega_{N-2}$ denotes the volume of the unit $(N-2)$-dimensional sphere.
\end{Proposition}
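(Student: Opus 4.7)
The plan is to combine the variational characterization of $m(\Sigma)$ with a slicewise Schwarz rearrangement in the transversal variables $x'$. By Lemma \ref{l:secondo_lemma_Phi_1}(ii), the infimum in \eqref{eq:m(sigma)} is attained at $w_\Sigma := \Phi - (x_1-1)^+ \in \Di{\widetilde D}$, and by the Strong Maximum Principle $w_\Sigma \geq 0$ in $\widetilde D$, while $w_\Sigma = 0$ on $\partial \widetilde D$. I set $\Sigma^* := B'(\mathbf 0, r)$ with $r := \bigl(\mu(N-1)/\omega_{N-2}\bigr)^{1/(N-1)}$, so that $|\Sigma^*| = \mu$, and let $\widetilde D^* := D^+ \cup (\Sigma^* \times (-\infty, 1])$ be the corresponding symmetrized waveguide. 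The goal is to produce a test function $w^* \in \Di{\widetilde D^*}$ for the problem on $\Sigma^*$ satisfying $J_{\Sigma^*}(w^*) \leq J_\Sigma(w_\Sigma) = m(\Sigma)$.

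For each fixed $x_1 \in \R$, I would define $w^*(x_1, \cdot)$ as the symmetric decreasing rearrangement in $\R^{N-1}$ of the slice $w_\Sigma(x_1, \cdot)$ (extended by $0$ outside $\widetilde D$). Because $w_\Sigma$ vanishes on $\partial \widetilde D$, its slice at each level $x_1 \leq 1$ is supported in $\Sigma$, and hence the rearranged slice is supported in $\Sigma^*$; for $x_1 > 1$ both slices live on all of $\R^{N-1}$. This guarantees $w^* \in \Di{\widetilde D^*}$. Two classical properties of Schwarz rearrangement then close the argument: equimeasurability of the trace at $x_1 = 1$ yields
\[
\int_{\Sigma^*} w^*(1,x')\,dx' = \int_\Sigma w_\Sigma(1,x')\,dx',
\]
while the P\'olya--Szeg\H{o} inequality for slicewise rearrangement yields
\[
\int_{\widetilde D^*} |\nabla w^*|^2\,dx \leq \int_{\widetilde D} |\nabla w_\Sigma|^2\,dx.
\]
Combining these bounds gives $m(\Sigma^*) \leq J_{\Sigma^*}(w^*) \leq J_\Sigma(w_\Sigma) = m(\Sigma)$, which is the claim.

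The main technical step is justifying the P\'olya--Szeg\H{o} inequality for codimension-one Schwarz rearrangement: the $(N-1)$-dimensional P\'olya--Szeg\H{o} inequality applied for each fixed $x_1$ controls the transversal contribution $\int |\nabla_{x'} w^*|^2\,dx$ by $\int |\nabla_{x'} w_\Sigma|^2\,dx$, while the longitudinal part $\int |\partial_{x_1} w^*|^2\,dx$ is controlled via non-expansivity of the Schwarz rearrangement on $L^2$ applied to the difference quotients in $x_1$. Equivalently, one may iterate Steiner symmetrizations in the directions $x_2, \dots, x_N$ and pass to the spherical limit. Either route is by now classical, so no genuine obstacle arises beyond verifying this rearrangement-type inequality; the rest of the proof is essentially the variational characterization inherited from Lemma \ref{l:secondo_lemma_Phi_1}.
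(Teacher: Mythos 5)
Your proposal is correct and follows essentially the same route as the paper: the authors also perform the codimension-$(N-1)$ Steiner (slicewise Schwarz) rearrangement of a nonnegative minimizer of $J_\Sigma$, invoke the corresponding P\'olya--Szeg\H{o} inequality (citing \cite{BSY} and \cite{capriani}) for the Dirichlet energy, and use equimeasurability (Cavalieri) for the linear term. The only cosmetic difference is that the paper states the inequality $J_\Sigma(w)\geq J_{B'({\mathbf 0},r_\Sigma)}(w^\sigma)$ for an arbitrary nonnegative $w$ and then observes that the minimum is attained by a nonnegative function, whereas you apply it directly to $\Phi-(x_1-1)^+$.
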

\begin{pf}
  For every $\Sigma\subset \R^{N-1}$ being an open bounded domain
  containing $0$, let us consider the domain 
$D_\Sigma:=\big((-\infty,1]\times \Sigma\big)\cup D^+$ 
and its Steiner symmetral $D_\Sigma^\sigma$  in codimension $N-1$
  defined as 
\[
D_\Sigma^\sigma=\big((-\infty,1]\times B'({\mathbf 0},r_\Sigma)
\big)\cup D^+=D_{B'({\mathbf 0},r_\Sigma)},
\]
where $r_\Sigma=\big(\tfrac{|\Sigma|(N-1)}{\omega_{N-2}}\big)^{\frac1{N-1}}$.
For every $w\in \mathcal D^{1,2}(D_\Sigma)$, $w\geq 0$ a.e., its Steiner rearrangement
in codimension $N-1$ is the function $w^\sigma\in  \mathcal
D^{1,2}(D_\Sigma^\sigma)$ defined as 
\[
w^\sigma(x_1,x')=\inf\bigg\{t>0: |\{y\in \R^{N-1}:w(x_1,y)>t |
\leq \frac{\omega_{N-2}}{N-1}|x'|^{N-1}\bigg\}.
\]
For every open bounded domain $\Sigma\subset
\R^{N-1}$ containing $0$ and $w\in \mathcal D^{1,2}(D_\Sigma)$  such that
$w\geq0$ a.e., the P\'olya-Szeg\"o inequality for the Steiner rearrangement (see e.g
\cite{BSY} and \cite{capriani}) implies that 
\[
\int_{D_\Sigma}|\nabla w(x_1,x')|^2\,dx_1\,dx'\geq 
\int_{D_\Sigma^\sigma}|\nabla w^\sigma(x_1,x')|^2\,dx_1\,dx'=
\int_{D_{B'({\mathbf 0},r_\Sigma)}}|\nabla w^\sigma(x_1,x')|^2\,dx_1\,dx'
,
\]
whereas the Cavalieri principle yields
\[
\int_{\Sigma}w(1,x')\,dx'=
\int_{
 B'({\mathbf 0},r_\Sigma)}w^\sigma(1,x')\,dx'.
\]
Therefore, letting $J_\Sigma:\mathcal D^{1,2}(D_\Sigma)\to\R$,
$J_\Sigma(w):= \frac12\int_{D_\Sigma}|\nabla w|^2\,dx-\int_{\Sigma}
w(1,x')\,dx'$, we have that
\[
J_\Sigma(w)\geq J_{B'({\mathbf 0},r_\Sigma)}(w^\sigma)\quad\text{for
  every  $w\in \mathcal D^{1,2}(D_\Sigma)$  such that
$w\geq0$ a.e..}
\]
Since the minimum of $J_\Sigma$ over $\mathcal D^{1,2}(D_\Sigma)$ is
attained by a nonnegative function, we then conclude 
that 
\[
m(\Sigma)\geq m(B'({\mathbf 0},r_\Sigma))
\]
for every open bounded domain $\Sigma\subset
\R^{N-1}$ containing $0$, thus completing the proof.
\end{pf}

\section{Rate of convergence for eigenfunctions}\label{sec:rate-conv-eigenf}

In this section we prove a sharp estimate for the rate of convergence
of eigenfunctions.
In view of Corollary \ref{stima_autofunzioni_via_blow_up}, it will be
sufficient to obtain
an estimate of $\|u_\eps - u_0\|_{\Di{D^+}}$. 
To this aim, we consider the following operator
\begin{align}\label{def_operatore_F}
  F:\  &\R \times \Di{D^+}  \longrightarrow  \R \times (\Di{D^+})^\star\\
\notag  &(\lambda,u) \quad \longmapsto \quad (\nor{u}^2 -\lambda_0, -\Delta
  u-\lambda p u),
\end{align}
   where the symbol $\nor{\cdot}$ stands for the $\Di{D^+}$-norm, i.e. 
 \[
\|u\|:=\bigg(\int_{D^+}|\nabla u|^2dx\bigg)^{1/2},
\]
 $(\mathcal D^{1,2}(D^+))^\star$ is
the dual space of $\mathcal D^{1,2}(D^+)$,  and, for all
 $u\in \Di{D^+}$,  $-\Delta
  u-\lambda pu\in (\Di{D^+})^\star$ acts as 
\[
\phantom{a}_{(\mathcal D^{1,2}(D^+))^\star}\big\langle 
-\Delta
  u-\lambda p u
,
v
\big\rangle_{\mathcal D^{1,2}(D^+)}=\int_{D^+}\nabla u(x)\cdot\nabla
v(x)\,dx-\lambda \int_{D^+} p(x)u(x)v(x)\,dx.
\]
  We recall from \eqref{eq:13} that $\int_{D^+}p {u_0}^2\,dx =1$ and
  hence $\|u_0\|^2=\lambda_0$.  
Therefore $F(\lambda_0,u_0)=(0,0)$. 

\begin{Lemma}\label{F_frechet}
  Under assumptions \eqref{eq:condsigma}, \eqref{eq:p}, \eqref{eq:p2},
  \eqref{eq:53}, and \eqref{eq:54}, let
  $\lambda_0=\lambda_{k_0}(D^+)=\lambda_{\bar k}(D^- \cup D^+)$ be the
  $\bar k$-th eigenvalue of problem \eqref{eq:30} on $D^- \cup D^+$
  (which is equal to the simple $k_0$-th eigenvalue on $D^+$) and
  $u_0$  be as in \eqref{eq:u0} and \eqref{eq:13}.
Then, the operator $F$ defined in \eqref{def_operatore_F} is
Frech\'{e}t-differentiable at $(\lambda_0,u_0)$ and its 
Frech\'{e}t-differential $dF(\lambda_0,u_0)\in \mathcal L(
\R \times \Di{D^+},\R \times (\Di{D^+})^\star)$ is invertible.
\end{Lemma}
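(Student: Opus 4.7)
The plan is to verify Fréchet-differentiability by a direct expansion and then establish invertibility of $dF(\lambda_0,u_0)$ via a Fredholm alternative argument that exploits the simplicity of $\lambda_0$ as an eigenvalue on $D^+$.

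\textbf{Step 1: Fréchet-differentiability.} I would expand
\begin{align*}
F(\lambda_0+\mu,u_0+v)-F(\lambda_0,u_0)
=\bigl(2\langle u_0,v\rangle+\|v\|^2,\ -\Delta v-\lambda_0 pv-\mu p u_0-\mu pv\bigr),
\end{align*}
where $\langle u_0,v\rangle=\int_{D^+}\nabla u_0\cdot\nabla v\,dx$. The candidate differential is therefore
\begin{equation*}
dF(\lambda_0,u_0)(\mu,v)=\bigl(2\langle u_0,v\rangle,\ -\Delta v-\lambda_0 p v-\mu p u_0\bigr),
\end{equation*}
which is clearly linear and continuous, the map $v\mapsto -\Delta v-\lambda_0 pv\in(\mathcal D^{1,2}(D^+))^\star$ being bounded since $p\in L^{N/2}(\R^N)$ (use H\"older and Sobolev to estimate $|\int p\varphi\psi|\le C\|p\|_{L^{N/2}}\|\varphi\|\|\psi\|$). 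The remainder is $(\|v\|^2,-\mu pv)$; its first component is $o(\|v\|)$, and the second is bounded in $(\mathcal D^{1,2})^\star$ by $C|\mu|\,\|p\|_{L^{N/2}}\|v\|=o(|\mu|+\|v\|)$ as $(\mu,v)\to(0,0)$. This proves Fréchet-differentiability.

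\textbf{Step 2: Invertibility.} Given any $(\alpha,f)\in\R\times(\mathcal D^{1,2}(D^+))^\star$, I have to solve uniquely
\begin{equation*}
2\langle u_0,v\rangle=\alpha,\qquad -\Delta v-\lambda_0 pv=f+\mu p u_0.
\end{equation*}
Consider the bounded linear operator $T:\mathcal D^{1,2}(D^+)\to\mathcal D^{1,2}(D^+)$ defined by $Tw=(-\Delta)^{-1}(pw)$; by the assumptions on $p$ (in particular $p\in L^{N/2}(\R^N)$), $T$ is compact and self-adjoint in the scalar product $\langle\cdot,\cdot\rangle$. The equation $-\Delta v-\lambda_0 p v=g$ thus rewrites as $(I-\lambda_0 T)v=(-\Delta)^{-1}g$, to which the Fredholm alternative applies. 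Since $\lambda_0$ is simple on $D^+$, $\ker(I-\lambda_0 T)=\mathbb R u_0$, and by self-adjointness the range of $I-\lambda_0 T$ is $u_0^\perp$, so solvability amounts to $\langle g,u_0\rangle_{\mathcal D^{1,2}}=0$, i.e., in the distributional pairing,
\begin{equation*}
\phantom{a}_{(\mathcal D^{1,2})^\star}\langle f+\mu p u_0,u_0\rangle_{\mathcal D^{1,2}}=0.
\end{equation*}
Using $\int_{D^+}p u_0^2\,dx=1$ from \eqref{eq:13}, this fixes $\mu$ uniquely as
\begin{equation*}
\mu=-\phantom{a}_{(\mathcal D^{1,2})^\star}\langle f,u_0\rangle_{\mathcal D^{1,2}}.
\end{equation*}

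\textbf{Step 3: Closing the system.} With this $\mu$, the set of solutions $v$ of the second equation is an affine line $v_\ast+t u_0$, $t\in\R$. The first equation becomes $2\langle u_0,v_\ast\rangle+2t\|u_0\|^2=\alpha$, and since $\|u_0\|^2=\lambda_0\int_{D^+}pu_0^2\,dx=\lambda_0>0$, this determines $t$ uniquely. Hence $dF(\lambda_0,u_0)$ is a bijection, and the open mapping theorem (or the explicit bound coming from the Fredholm inverse on $u_0^\perp$) yields continuity of the inverse. The main technical point is ensuring the compactness of $T$ and the self-adjointness structure needed for the Fredholm alternative in the unbounded half-space $D^+$, but these follow from \eqref{eq:p}--\eqref{eq:p2} by standard Brezis--Kato/Sobolev arguments already invoked in the paper.
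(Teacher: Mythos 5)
Your proof is correct, and the differentiability step (Step 1) coincides with the paper's: same expansion, same remainder estimate $(\|v\|^2,-\mu pv)=o(|\mu|+\|v\|)$. For the invertibility, however, you take a slightly different route. The paper composes $dF(\lambda_0,u_0)$ with $\mathop{\rm Id}_\R\times\mathcal R$ ($\mathcal R$ the Riesz isomorphism), observes that the resulting operator on $\R\times\Di{D^+}$ is a compact perturbation of the identity, and therefore reduces invertibility to \emph{injectivity} of the full bordered operator, which it then checks by testing the equation $-\Delta u-\lambda_0 pu-\lambda pu_0=0$ against $u_0$ and using simplicity. You instead apply the Fredholm alternative only to the scalar operator $I-\lambda_0 T$ on $\Di{D^+}$ and \emph{solve the bordered system explicitly}: the solvability condition $\phantom{a}_{\star}\langle f+\mu pu_0,u_0\rangle=0$ together with $\int_{D^+}pu_0^2\,dx=1$ pins down $\mu$, and the normalization equation pins down the component of $v$ along $u_0$ since $\|u_0\|^2=\lambda_0>0$. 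Both arguments rest on the same two ingredients (compactness of $u\mapsto pu$ from $\Di{D^+}$ to its dual, and simplicity of $\lambda_0$ on $D^+$ from \eqref{eq:53}); the paper's version is marginally shorter because surjectivity comes for free from the Fredholm index, whereas yours is more constructive and yields the explicit formula $\mu=-\phantom{a}_{\star}\langle f,u_0\rangle$, which makes the continuity of the inverse transparent without invoking the open mapping theorem. No gaps.
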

\begin{pf}
For all $(\lambda,u)\in \R \times \Di{D^+}$, there holds
 \begin{align*}
 F(\lambda_0+\lambda,u_0+u)&=
\big(\nor{u_0+u}^2 -\lambda_0, 
-\Delta u -\lambda_0p u -\lambda p u_0  - \lambda p u\big)\\
&=\bigg( 2\int_{D^+}\nabla u_0\cdot\nabla u \,dx+\|u\|^2,-\Delta u
-\lambda_0p u -\lambda p u_0  - \lambda p u\bigg)\\
&=\bigg( 2\int_{D^+}\nabla u_0\cdot\nabla u \,dx,-\Delta u
-\lambda_0p u -\lambda p u_0\bigg)+o(|\lambda|+\|u\|)
\end{align*}
as $(\lambda,u)\to0$  in  $\R \times \Di{D^+}$. Therefore  $F$ is
Frech\'{e}t-differentiable at $(\lambda_0,u_0)$ and 
\begin{equation*}
 dF(\lambda_0,u_0)(\lambda,u)=\bigg( 2\int_{D^+}\!\!\nabla u_0\!\cdot\!\nabla u \,dx,-\Delta u
-\lambda_0p u -\lambda p u_0\bigg)
\quad\text{ for every }(\lambda,u)\in \R \times \Di{D^+}.
\end{equation*}
It remains to prove that 
$dF(\lambda_0,u_0):\R \times \Di{D^+}\to\R \times (\Di{D^+})^\star$ is invertible.
To this aim, by exploiting the 
compactness of the map 
 $\Di{D^+}\to (\Di{D^+})^\star$, $u\mapsto pu$, it is easy to prove that, if
$\mathcal R:(\Di{D^+})^\star\to \Di{D^+}$ is the Riesz isomorphism and
$\mathop{\rm Id}_\R$ denotes the identity on $\R$,
then the operator $(\mathop{\rm Id}_\R\times \mathcal R)\circ
dF(\lambda_0,u_0)\in\mathcal L(\R \times \Di{D^+})$ is a compact
perturbation of the identity. Therefore, from the Fredholm
alternative, $dF(\lambda_0,u_0)$ is invertible if and only if it is
injective. 

Let  $(\lambda,u)\in \R \times \Di{D^+}$ be such that
$dF(\lambda_0,u_0)(\lambda,u)$ vanishes, i.e.
\begin{equation*}
\begin{cases}
 2\int_{D^+}\nabla u_0\cdot\nabla u \,dx =0,\\
-\Delta u -\lambda_0 p u -\lambda p u_0 =
0,
\end{cases}
\end{equation*}
i.e. 
\begin{equation}\label{eq:38}
  \int_{D^+}(\nabla u\cdot\nabla v
-\lambda_0 p uv -\lambda p u_0v)
 \,dx=0\text{ for all }v\in \Di{D^+}. 
\end{equation}
Therefore, 
\begin{equation}\label{eq:35}
\lambda_0 \int_{D^+}puu_0 \,dx=\int_{D^+}\nabla
u_0\cdot\nabla u\,dx=0
\end{equation}
and then, choosing $v=u_0$ in \eqref{eq:38}, we obtain that $0=\lambda\int_{D^+}pu_0^2 \,dx=\lambda$. It
follows that $u$ is a weak $\Di{D^+}$-solution to $-\Delta u =\lambda_0 p
u$ in $D^+$. Since, by assumption \eqref{eq:53}, the eigenvalue $\lambda_0$
is simple on $D^+$, we conclude that $u=\alpha u_0$ for some $\alpha\in\R$.
From \eqref{eq:35} it follows that $0=\alpha\lambda_0 \int_{D^+}pu_0^2
\,dx$ which implies $\alpha=0$ and then $u\equiv 0$ in $D^+$.
We conclude that $dF(\lambda_0,u_0)$ is injective  and then
invertible.
\end{pf}

\begin{Theorem}\label{stima_teo_inversione}
Let $\widehat {u}_{\eps,R}$ be as in definition \eqref{v_j^eps_zero_al_giunto}.
Then, for every $\eps>0$  and $R>2$ there exist
$K(\eps,R), K(R)\in\R$ such that 
\begin{gather*}
\eps^{-N/2}\|\widehat {u}_{\eps,R} -
u_0\|_{\Di{D^+}}\leq K(\eps,R),\\[5pt]
\lim_{\eps\to 0^+}K(\eps,R)=K(R),\quad \text{for every }R>2,\quad
\text{and}\quad \lim_{R\to+\infty}K(R)=0.
\end{gather*}
\end{Theorem}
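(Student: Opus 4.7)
The plan is to deduce the estimate from the quantitative inverse-function principle applied to the operator $F$ of \eqref{def_operatore_F}. By Lemma \ref{F_frechet}, $F$ is Fr\'echet differentiable at $(\lambda_0,u_0)$ with invertible differential, so there exist $\delta,C>0$ with
\begin{equation*}
|\mu|+\|w\|_{\Di{D^+}}\leq C\,\|F(\lambda_0+\mu,\,u_0+w)\|_{\R\times(\Di{D^+})^\star}
\end{equation*}
whenever $|\mu|+\|w\|_{\Di{D^+}}\leq\delta$. Since $\lambda_\eps\to\lambda_0$ and, via the Dirichlet principle applied to \eqref{eq:min1} together with \eqref{convergenza_u_0}, $\widehat u_{\eps,R}\to u_0$ in $\Di{D^+}$ for every fixed $R>2$, this applies to $(\mu,w)=(\lambda_\eps-\lambda_0,\widehat u_{\eps,R}-u_0)$ for $\eps$ small enough. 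The task reduces to bounding $\|F(\lambda_\eps,\widehat u_{\eps,R})\|$ componentwise.

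For the scalar component, I split $\|\widehat u_{\eps,R}\|^2=\int_{D^+\setminus B^+_{R\eps}}|\nabla u_\eps|^2+\int_{B^+_{R\eps}}|\nabla\bar v_{\eps,R}|^2$ and combine the normalization $\int_{\Omega^\eps}|\nabla u_\eps|^2=\lambda_\eps$ with the blow-up convergences \eqref{Phi}, \eqref{z_h}, the exponential channel decay of Corollary \ref{motivazione_O_grande}, and Theorem \ref{teo_asintotico_autovalori}, to get $|\|\widehat u_{\eps,R}\|^2-\lambda_0|=O(\eps^N)=o(\eps^{N/2})$ with $R$-bounded constants. This is the easy half.

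For the dual component, the crucial structural fact is that by \eqref{eq:p2}, $p\equiv 0$ on $B^+_3\supset B^+_{R\eps}$ for small $\eps$, so that $u_\eps$ and $\bar v_{\eps,R}$ are \emph{both} harmonic in $B^+_{R\eps}$. Integration by parts then gives, for $v\in\Di{D^+}$,
\begin{equation*}
\big\langle{-\Delta\widehat u_{\eps,R}-\lambda_\eps p\widehat u_{\eps,R}},\,v\big\rangle=\int_{\Gamma^+_{R\eps}}v\bigg(\frac{\partial\bar v_{\eps,R}}{\partial\nu}-\frac{\partial u_\eps}{\partial\nu}\bigg)d\sigma,
\end{equation*}
a boundary-jump distribution supported on $\Gamma^+_{R\eps}$. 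I would bound this pairing by $H^{1/2}$--$H^{-1/2}$ duality; rescale to $\Gamma^+_R$ via \eqref{eq:19}--\eqref{eq:20}; invoke Remark \ref{rem:hdiv} to identify the limit of the rescaled jump as $\bigl(\tfrac{\partial u_0}{\partial x_1}(\e)\bigr)(\partial_\nu z_R-\partial_\nu\Phi)$; and apply a Hardy inequality at $\e$ to control $\|v\|_{H^{1/2}(\Gamma^+_{R\eps})}$ by $C(R)\|v\|_{\Di{D^+}}$. These ingredients assemble into
\begin{equation*}
\|F_2(\lambda_\eps,\widehat u_{\eps,R})\|_{(\Di{D^+})^\star}\leq\eps^{N/2}C(R)\Big(\big|\tfrac{\partial u_0}{\partial x_1}(\e)\big|\,\big\|\partial_\nu(z_R-\Phi)\big\|_{H^{-1/2}(\Gamma^+_R)}+o_\eps(1)\Big).
\end{equation*}

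Putting the two estimates together, $\eps^{-N/2}\|\widehat u_{\eps,R}-u_0\|_{\Di{D^+}}\leq K(\eps,R)$ with $K(\eps,R)\to K(R)$ as $\eps\to 0^+$, where $K(R)$ is a constant times $C(R)\,\|\partial_\nu(z_R-\Phi)\|_{H^{-1/2}(\Gamma^+_R)}$. The final step---and the main technical obstacle---is proving $K(R)\to 0$ as $R\to\infty$. By construction, $z_R-\Phi$ is harmonic in $B^+_R$, vanishes on $\Gamma^+_R$ (both functions equal $\Phi$ there), and on the flat part $\partial D^+\cap\overline{B(\e,R)}$ equals $-\Phi(1,\cdot)\chi_\Sigma$, which is compactly supported near $\e$ independently of $R$. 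Classical boundary gradient estimates for such harmonic functions yield a polynomial decay $\|\partial_\nu(z_R-\Phi)\|_{H^{-1/2}(\Gamma^+_R)}=O(R^{-\alpha})$ for some $\alpha$ exceeding the polynomial growth of the Hardy/trace constant $C(R)$, so that $K(R)\to 0$. Making this quantitative $R$-tracking uniform is where the bulk of the technical work sits.
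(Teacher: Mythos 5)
Your overall architecture coincides with the paper's: linearize $F$ at $(\lambda_0,u_0)$, invoke Lemma \ref{F_frechet} and the continuity of $(dF(\lambda_0,u_0))^{-1}$, estimate the scalar component via $a^{\eps}_{\bar k,R}$ and Theorem \ref{teo_asintotico_autovalori}, and recognize that the dual component is the jump $\int_{\Gamma^+_{R\eps}}(\partial_\nu\bar v_{\eps,R}-\partial_\nu u_\eps)\varphi\,d\sigma$, to be rescaled to $\Gamma_R^+$. The scalar half is fine.

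The gap is in the dual component, precisely at the step you yourself flag as ``the main technical obstacle.'' Your route bounds the surface pairing by $H^{1/2}$--$H^{-1/2}$ duality and reduces $K(R)\to0$ to the assertion that $\|\partial_\nu(z_R-\Phi)\|_{H^{-1/2}(\Gamma_R^+)}$ decays polynomially at a rate beating the growth of the trace constant $C(R)$. You give no proof of this decay, and it is not innocuous: the total energy $\int_{B_R^+}|\nabla(z_R-\Phi)|^2\,dx$ does \emph{not} tend to zero (it converges to $\int_{D^+}|\nabla(\Phi-(x_1-1))|^2>0$, since the boundary datum $-\Phi(1,\cdot)$ on $\Sigma$ persists), so the smallness must come from a genuinely finer argument than a crude energy or gradient bound, and the competition with $C(R)$ must be tracked explicitly. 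As written, the conclusion $\lim_{R\to+\infty}K(R)=0$ is not established.

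The paper sidesteps all of this: since $\Gamma_R^+$ is the \emph{interior} boundary and the test function vanishes on $\partial D^+$, the rescaled surface pairing equals the volume pairing $\int_{B_R^+}\nabla(Z^R_\eps-U_\eps)\cdot\nabla\varphi\,dx$, whose limit is $\big(\tfrac{\partial u_0}{\partial x_1}(\e)\big)\int_{B_R^+}\nabla(z_R-\Phi)\cdot\nabla\varphi\,dx$. Writing $z_R-\Phi=(z_R-(x_1-1)^+)-(\Phi-(x_1-1)^+)$, Lemma \ref{l:secondo_lemma_Phi_1}(iii) gives $\int_{D^+}\nabla(\Phi-(x_1-1)^+)\cdot\nabla\varphi\,dx=0$ for $\varphi\in\Di{D^+}$, so the first piece moves to the exterior region $D^+\setminus B_R^+$ where the tail energy of $\Phi-(x_1-1)$ vanishes by \eqref{eq_Phi_1}, while the second piece is $o(1)$ by the Dirichlet principle with a cut-off and \eqref{eq:Phiinfty}. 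No trace constants, no decay rates, and $K(R)\to0$ follows from Cauchy--Schwarz alone. If you want to keep your duality route you must supply the quantitative decay of $\partial_\nu(z_R-\Phi)$ on $\Gamma_R^+$ together with the growth rate of $C(R)$; otherwise, switch to the volume-pairing identity, which closes the argument.
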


\begin{pf}
Let us fix $R>2$ and notice that $\widehat {u}_{\eps,R}\to u_0$ in
$\Di{D^+}$ as $\eps\to 0^+$. Indeed, from \eqref{eq:19},
\eqref{eq:20}, \eqref{x_1}, \eqref{z_h}, and \eqref{convergenza_u_0},
we deduce that  
\begin{align*}
  \int_{D^+}|\nabla(\widehat {u}_{\eps,R}-u_0)|^2\,dx=
  \int_{D^+\setminus B_{R\eps}^+}|\nabla(u_{\eps}-u_0)|^2\,dx+
  \eps^N\int_{B_{R}^+}|\nabla(Z^{R}_\eps-u_{0,\eps})|^2\,dx=o(1)
\end{align*}
as $\eps\to 0^+$.
Therefore 
\begin{equation}\label{eq:37}
 F(\lambda_\eps,\widehat {u}_{\eps,R}) = dF(\lambda_0,u_0) (\lambda_\eps - \lambda_0, \widehat {u}_{\eps,R} - u_0)
+ o(|\lambda_\eps - \lambda_0| + \|\widehat {u}_{\eps,R} -
u_0\|),\quad\text{as }\eps\to 0^+.
\end{equation}
In view of Lemma \ref{F_frechet}, the operator $dF(\lambda_0,u_0)$ is
invertible (and its inverse is continuous by the Open Mapping
Theorem), then \eqref{eq:37} implies that 
\begin{gather}\label{eq:41}
|\lambda_\eps - \lambda_0| + \|\widehat {u}_{\eps,R} -
u_0\|\\
\notag\leq \|(dF(\lambda_0,u_0))^{-1}\|_{ \mathcal L(
\R \times (\Di{D^+})^\star,\R \times \Di{D^+})}
\|dF(\lambda_0,u_0) (\lambda_\eps - \lambda_0, \widehat {u}_{\eps,R} - u_0)\|_{\R \times (\Di{D^+})^\star}\\
\notag=\|(dF(\lambda_0,u_0))^{-1}\|_{ \mathcal L(
\R \times (\Di{D^+})^\star,\R \times \Di{D^+})}
\|
F(\lambda_\eps,\widehat {u}_{\eps,R})\|_{\R \times (\Di{D^+})^\star }
(1+o(1))
\end{gather}
as $\eps\to 0^+$.
In order to prove the theorem, we are going to estimate the norm of
\begin{equation}\label{eq:42}
  F(\lambda_\eps,\widehat {u}_{\eps,R}) = (\mu_\eps,w_\eps)
  = \Big(\|\widehat {u}_{\eps,R}\|^2 -\lambda_0,
      -\Delta \widehat {u}_{\eps,R} - \lambda_\eps p \widehat {u}_{\eps,R}\Big).
              \end{equation}
 As far as $\mu_\eps$ is concerned, from Theorem
 \ref{teo_asintotico_autovalori} and \eqref{secondo_coeff_sopra}
 it follows that 
\begin{align}\label{eq:43}
\mu_\eps  &= \int_{D^+} |\nabla \widehat {u}_{\eps,R}|^2\,dx
-\lambda_0 = 
\int_{D^+\setminus B_{R\eps}^+} |\nabla u_\eps|^2dx
+ \int_{B_{R\eps}^+} |\nabla \bar {v}_{\eps,R}|^2dx -\lambda_0 \\
\notag&= \int_{B_{R\eps}^+} |\nabla \bar {v}_{\eps,R}|^2dx - \int_{\Omega^\eps_{1+R\eps}} |\nabla u_\eps|^2dx
+\lambda_\eps- \lambda_0\\
\notag&= a_{\bar k,R}^\eps+\lambda_\eps- \lambda_0=  O(\eps^N),
\quad\text{as }\eps\to 0^+,
\end{align}
where  $a_{\bar k,R}^\eps$ is as in the proof of Theorem
\ref{t:asintotico_autovalori1}. In particular $\lim_{\eps\to 0^+}\eps^{-N/2}\mu_\eps =0$.

As far as $w_\eps$ is concerned, we observe that, for every
$\varphi\in  \mathcal D^{1,2}(D^+)$,
\[
\phantom{a}_{(\mathcal D^{1,2}(D^+))^\star}\big\langle 
w_\eps, \varphi\big\rangle_{\mathcal D^{1,2}(D^+)}
= \int_{D^+} \big(\nabla \widehat {u}_{\eps,R} \nabla \varphi -\lambda_\eps p \widehat {u}_{\eps,R}\varphi\big)\,dx
= \int_{\Gamma_{R\eps}^+}\bigg( \frac{\partial \bar {v}_{\eps,R}}{\partial \nu} - \frac{\partial u_\eps}{\partial \nu}
\bigg)\varphi\,d\sigma.
\]
Thus, letting $Z^R_\eps$ and $U_\eps$ as in \eqref{eq:20}
and \eqref{eq:19} respectively, we have that 
\begin{align*}
\eps^{-N/2}\|w_\eps\|_{(\mathcal D^{1,2}(D^+))^\star}&=
\frac{1}{\eps^{N/2}} \sup_{\substack{
\varphi \in \Di{D^+} \\ 
\nor{\varphi}=1}}
\int_{\Gamma_{R\eps}^+}\bigg( \frac{\partial \bar {v}_{\bar
    k,R}^{\eps}}{\partial \nu} - \frac{\partial u_\eps}{\partial \nu}
\bigg)\varphi\,d\sigma\\
\notag&= \sup_{\substack{\varphi \in \Di{D^+}\\ \nor{\varphi}=1}}
\int_{\Gamma_{R}^+}\left( \dfrac{\partial Z_\eps^R}{\partial \nu}
- \dfrac{\partial U_\eps}{\partial \nu} \right)\dfrac{\varphi(\e+\eps(x-\e))}{\eps^{1-N/2}}\,d\sigma \\
\notag&= \sup_{\substack{\varphi \in \Di{D^+} \\\nor{\varphi}=1}}
\int_{\Gamma_{R}^+}\left( \dfrac{\partial Z_\eps^R}{\partial \nu}
- \dfrac{\partial U_\eps}{\partial \nu}\right)\mathcal T_\eps(\varphi)\,d\sigma \\
\notag&= \sup_{\substack{\varphi \in \Di{D^+} \\\nor{\varphi}=1}}
\int_{B_{R}^+}\nabla (Z_\eps^R-U_\eps)\cdot \nabla
\mathcal T_\eps(\varphi)\,dx
\end{align*}
where $\mathcal T_\eps:\Di{D^+}\to \Di{D^+}$ is defined as $\mathcal
T_\eps(\varphi) (x)=\eps^{\frac{N-2}{2}}\varphi(\e+\eps(x-\e))$. 
Since $\mathcal T_\eps$ is an isometry of $\Di{D^+}$, we deduce that 
\begin{align*}
\eps^{-N/2}\|w_\eps\|_{(\mathcal D^{1,2}(D^+))^\star}= \sup_{\substack{\varphi \in \Di{D^+} \\\nor{\varphi}=1}}
\int_{B_{R}^+}\nabla (Z_\eps^R-U_\eps)\cdot \nabla
\varphi\,dx.
\end{align*}
From the convergences \eqref{Phi} and \eqref{z_h}  established in
Lemma \ref{blow_up_vari} it follows that 
\begin{equation}\label{eq:44}
\lim_{\eps\to 0^+}\eps^{-N/2}\|w_\eps\|_{(\mathcal
  D^{1,2}(D^+))^\star}= \big(\tfrac{\partial u_0}{\partial x_1}(\e)\big)\sup_{\substack{\varphi \in \Di{D^+}
    \\\nor{\varphi}=1}} \int_{B_{R}^+}\nabla (z_R-\Phi)\cdot \nabla \varphi\,dx.
\end{equation}
We observe that, for every $\varphi \in \Di{D^+}$ such that $\|\varphi\|=1$, Lemma
\ref{l:secondo_lemma_Phi_1}(iii) implies that 
\begin{align}\label{eq:39}
  \bigg|\int_{B_{R}^+}&\nabla (z_R-\Phi)\cdot \nabla
  \varphi\,dx\bigg|\\
  \notag&= \bigg|\int_{B_{R}^+}\nabla ((x_1-1)^+-\Phi)\cdot \nabla
  \varphi\,dx +
  \int_{B_{R}^+}\nabla (z_R-(x_1-1)^+)\cdot \nabla \varphi\,dx\bigg|\\
  \notag&= \bigg| -\int_{D^+\setminus B_{R}^+}\nabla
  ((x_1-1)^+-\Phi)\cdot \nabla \varphi\,dx +
  \int_{B_{R}^+}\nabla (z_R-(x_1-1)^+)\cdot \nabla \varphi\,dx\bigg|\\
  \notag&\leq \bigg(\int_{D^+\setminus B_{R}^+}|\nabla
  ((x_1-1)^+-\Phi)|^2\,dx\bigg)^{\!\!1/2} +
  \bigg(\int_{B_{R}^+}|\nabla (z_R-(x_1-1)^+)|^2\,dx\bigg)^{\!\!1/2}.
\end{align}
Since $z_R-(x_1-1)^+$ is harmonic in $B_R^+$,
$(z_R-(x_1-1)^+)\big|_{\Gamma_R^+}=\Phi-(x_1-1)^+\big|_{\Gamma_R^+}$,
and vanishes on $\partial B_R^+\cap \partial D^+$, if $\eta_R$ is a
smooth cut-off function satisfying \eqref{eq:cutoff}, from the Dirichlet
Principle, \eqref{eq_Phi_1}, and \eqref{eq:Phiinfty}, we can estimate
\begin{align}\label{eq:40}
  \int_{B_R^+} |\nabla &(z_R-(x_1-1)^+)|^2\,dx \leq \int_{B_R^+} 
|\nabla (\eta_R(\Phi-(x_1-1)^+))|^2 \,dx\\
\notag&\leq 2 \int_{B_R^+} |\nabla \eta_R|^2(\Phi-(x_1-1)^+)^2dx 
+ 2 \int_{D^+\setminus B_{R /2}^+} \eta_R^2 |\nabla(\Phi-(x_1-1)^+)|^2 dx\\
  &\notag\leq {\rm const\,} R^{-2} R^{2-2N} R^N + o(1) = o(1)
\end{align}
as $R\to+\infty$. From \eqref{eq:39}, \eqref{eq:40}, and
\eqref{eq_Phi_1} we deduce that 
\begin{equation}\label{eq:45}
\lim_{R\to +\infty}\sup_{\substack{\varphi \in \Di{D^+}
    \\\nor{\varphi}=1}} \int_{B_{R}^+}\nabla (z_R-\Phi)\cdot \nabla \varphi\,dx=0.
\end{equation}
The conclusion follows combining \eqref{eq:36} with \eqref{eq:41},
\eqref{eq:42}, \eqref{eq:43}, \eqref{eq:44}, and \eqref{eq:45}.
\end{pf}

\begin{pfn}{Theorem \ref{teo_asintotico_autofunzioni}}
Let $\delta>0$. From Theorem \ref{stima_teo_inversione} and \eqref{eq_Phi_1}, there exists
$R_0=R_0(\delta)>2$ such that 
\[
K^2(R_0)\in [0,\delta)\quad\text{and}\quad 
\big(\tfrac{\partial
  u_0}{\partial x_1}(\e)\big)^2\int_{D^+\setminus B^+_{R_0}}|\nabla (\Phi-(x_1-1))(x)|^2\,dx<\delta.
\]
From Theorem \ref{stima_teo_inversione},
it follows that 
\begin{align*}
  \bigg|\frac1{\eps^N}&\int_{\Omega^\eps}|\nabla (u_\eps-u_0)|^2\,dx-
  \big(\tfrac{\partial u_0}{\partial x_1}(\e)\big)^2 \int_{\widetilde D} \abs{\nabla (\Phi - (x_1-1)^+)}^2dx \bigg|
  \\
  &\leq \big(\tfrac{\partial u_0}{\partial x_1}(\e)\big)^2
  \int_{D^+\setminus B_{R_0}^+} \abs{\nabla (\Phi - (x_1-1)^+)}^2dx
  +\frac1{\eps^N}\int_{D^+\setminus B^+_{R_0\eps}}|\nabla
  (u_\eps-u_0)|^2\,dx
  \\
  &\quad +\bigg|\frac{1}{\eps^N} \int_{\Omega^\eps_{1+R_0\eps}}
  \abs{\nabla (u_\eps - u_0)}^2dx- \big(\tfrac{\partial u_0}{\partial
    x_1}(\e)\big)^2 \int_{T_1^-\cup B_{R_0}^+} \abs{\nabla (\Phi -
    (x_1-1)^+)}^2dx
  \bigg|\\
  &\leq \delta+K^2(\eps,R_0)
  \\
  &\quad +\bigg|\frac{1}{\eps^N} \int_{\Omega^\eps_{1+R_0\eps}}
  \abs{\nabla (u_\eps - u_0)}^2dx- \big(\tfrac{\partial u_0}{\partial
    x_1}(\e)\big)^2 \int_{T_1^-\cup B_{R_0}^+} \abs{\nabla (\Phi -
    (x_1-1)^+)}^2dx \bigg|
\end{align*}
and hence, from Corollary \ref{stima_autofunzioni_via_blow_up} and
Theorem \ref{stima_teo_inversione}, we deduce that there exists
$\eps(\delta)>0$ such that, for all $\eps\in(0,\eps(\delta))$,
\begin{equation*}
  \bigg|\frac1{\eps^N}\int_{\Omega^\eps}|\nabla (u_\eps-u_0)|^2\,dx-
  \big(\tfrac{\partial u_0}{\partial x_1}(\e)\big)^2 \int_{\widetilde D} \abs{\nabla (\Phi - (x_1-1)^+)}^2dx \bigg|
 \leq 3\delta+K^2(R_0)<4\delta,
\end{equation*}
thus proving that 
\[
\lim_{\eps\to 0^+}
\frac1{\eps^N}\int_{\Omega^\eps}|\nabla (u_\eps-u_0)|^2\,dx=
  \big(\tfrac{\partial u_0}{\partial x_1}(\e)\big)^2 \int_{\widetilde D} \abs{\nabla (\Phi - (x_1-1)^+)}^2dx.
\]
On the other hand, Lemma \ref{l:secondo_lemma_Phi_1}(iii),  Corollary
\ref{cor:min_phi1}, and \eqref{eq:legame_m_c} imply that 
\[
\int_{\widetilde D} \abs{\nabla (\Phi - (x_1-1)^+)}^2dx=
 \int_\Sigma
\Phi(1,x')\,dx=-2m(\Sigma)={\mathfrak C}(\Sigma).
\]
The proof is thereby complete.
\end{pfn}

\section{The resonant case}\label{sec:resonant-case}

In this section we drop 
 assumption \eqref{eq:54} and treat the case in which  
$\lambda_0$ is a double eigenvalue on $D^+ \cup D^-$ and a simple
eigenvalue on each of the components $D^+$ and $D^-$. To this aim, we
exploit twice the sharp asymptotics provided by Theorem
\ref{teo_asintotico_autovalori} in two domains obtained by attaching
small handles to each chamber $D^+,D^-$.

Besides \eqref{eq:condsigma} and \eqref{eq:p}, 
we assume that 
$p\in C^1(\R^N,\R)\cap L^{\infty}(\R^N)$ satisfies 
\begin{equation}\label{eq:p_risonanza}
p\not\equiv 0\text{ in }D^-,\quad
 p\not\equiv 0\text{ in }D^+,\quad
 p(x)=0\text { for all } 
x\in 
B^-_3 \cup \mathcal C_\eps \cup B^+_3,
\end{equation}
and  that there exist $k_0^+,k_0^-\geq1$ such that
\begin{align}
& \lambda_0 \in \sigma_p(D^+) \cap \sigma_p(D^-), \label{ipotesi_risonanza1}\\
&\label{ipotesi_risonanza2} \lambda_0 = \lambda_{k_0^+}(D^+) 
\text{ is simple on $D^+$ and the
    corresponding eigenfunctions}\\
\notag&\phantom{\lambda_0 = \lambda_{k_0^+}(D^+) }\text{ have in ${\mathbf e}_1
    $ a zero of order $1$},\\
&
\label{ipotesi_risonanza3} \lambda_0 = \lambda_{k_0^-}(D^-) 
\text{ is simple on $D^-$ and the
    corresponding eigenfunctions}\\
\notag&\phantom{\lambda_0 = \lambda_{k_0^-}(D^-) }\text{ have in ${\mathbf 0}
    $ a zero of order $1$}.
\end{align}
Since $\sigma_p(D^+ \cup D^-)= \sigma_p(D^+) \cup \sigma_p(D^-)$, \eqref{ipotesi_risonanza1},
\eqref{ipotesi_risonanza2}, and \eqref{ipotesi_risonanza3}
imply that $\lambda_0$ is a double eigenvalue on $D^+ \cup D^-$ and
hence there exists $\bar k\geq 1$ such that 
\begin{equation}\label{eq:15}
\lambda_0 = \lambda_{\bar k}(D^+ \cup D^-) = \lambda_{\bar k+1}(D^+ \cup D^-) .
\end{equation}
Let $u_0^+\in{\mathcal
    D}^{1,2}(D^+)\setminus\{0\}$ and 
$u_0^-\in{\mathcal
    D}^{1,2}(D^-)\setminus\{0\}$ be the  eigenfunctions 
associated to 
$\lambda_0$ on $D^+$ and $D^-$
respectively, 
  i.e. solving
\begin{equation}\label{eq:u0+-}
\begin{cases}
-\Delta u_0^+=\lambda_0 p u_0^+,&\text{in }D^+,\\
u_0^+=0,&\text{on }\partial D^+,
\end{cases}\quad \begin{cases}
-\Delta u_0^-=\lambda_0 p u_0^-,&\text{in }D^-,\\
u_0^-=0,&\text{on }\partial D^-,
\end{cases}
\end{equation}
such that
\begin{equation}\label{eq:normalu_0+-}
\frac{\partial u_0^+}{\partial x_1}({\mathbf e}_1
)>0,\quad
 \frac{\partial u_0^-}{\partial x_1}({\mathbf 0}
)<0,\quad \int_{D^+}p(x)|u_0^+(x)|^2\,dx=\int_{D^-}p(x)|u_0^-(x)|^2\,dx=1.
\end{equation}
Let us introduce the following domains
\begin{align}
  & D^+_\eps = D^+ \cup \left\{(x_1,x')\in \R\times\R^{N-1}:
    \ \frac34\leq x_1\leq 1,\ \frac{x'}{\eps}\in\Sigma \right\}, \label{D_eps+} \\
  & D^-_\eps = D^- \cup \left\{(x_1,x')\in \R\times\R^{N-1}:
    \ 0\leq x_1\leq \frac14,\ \frac{x'}{\eps}\in\Sigma \right\}, \label{D_eps-} \\
  & \widetilde{\Omega}^\eps = D^+_\eps \cup
  D^-_\eps \label{Omega_eps_tilde}.
\end{align}
We observe that the asymptotics of
eigenvalues stated in Theorem \ref{teo_asintotico_autovalori} can be
proved,  up to minor modifications, replacing the dumbbell perturbed
domain $\Omega^\eps$ defined in \eqref{eq:31}  with either the domain $D^+_\eps$ or $D^+_\eps$, since the proof just
relies on the attachment of a shrinking handle at a point in
which the limit eigenfunction has a zero of order $1$; therefore,
arguing as in the proof of Theorem
\ref{teo_asintotico_autovalori}, we can prove that, under assumptions
\eqref{eq:condsigma}, \eqref{eq:p}, 
\eqref{eq:p_risonanza}, 
\eqref{ipotesi_risonanza1}, \eqref{ipotesi_risonanza2}, and
\eqref{ipotesi_risonanza3}, there holds 
\begin{align}
 & \lambda_{k_0^+}(D^+) = \lambda_{k_0^+}(D_\eps^+) 
+ \eps^N \mathfrak C(\Sigma) \bigg(\dfrac{\partial u_0^+}{\partial
  x_1}(\mathbf e_1)\bigg)^{\!\!2} + o(\eps^N),
 \label{conseguenza1} \\
 & \lambda_{k_0^-}(D^-) = \lambda_{k_0^-}(D_\eps^-) 
+ \eps^N \mathfrak C(\Sigma) \bigg(\dfrac{\partial u_0^-}{\partial x_1}(\mathbf 0)\bigg)^{\!\!2} + o(\eps^N), \label{conseguenza2}
\end{align}
as $\eps\to 0^+$,
where $\mathfrak C(\Sigma)$ is defined in \eqref{def_compliance}.

Under the non-symmetry condition that the normal derivatives of the
limit eigenfunctions at the two junctions are different, we observe 
that  the double eigenvalue $\lambda_0$ is approximated by two
different branches of eigenvalues in
$\widetilde\Omega^\eps$, see figure \ref{fig:2}.
\begin{Proposition}\label{prop_risonanza}
Under  assumptions \eqref{eq:condsigma}, \eqref{eq:p}, 
\eqref{eq:p_risonanza}, 
\eqref{ipotesi_risonanza1}, \eqref{ipotesi_risonanza2}, and
\eqref{ipotesi_risonanza3}, let $u_0^+$ and $u_0^-$ be as in
\eqref{eq:u0+-} and \eqref{eq:normalu_0+-}, and let $D_\eps^+$,
$D_\eps^-$ be as in 
\eqref{D_eps+}, \eqref{D_eps-} respectively. 
If 
\begin{equation}\label{derivate_risonanza}
 \abs{\dfrac{\partial u_0^+}{\partial x_1}(\mathbf e_1)} >
 \abs{\dfrac{\partial u_0^-}{\partial x_1}(\mathbf 0)}, 
\end{equation}
then, for $\eps$ sufficiently small, 
\[
\lambda_{k_0^+}(D_\eps^+) = \lambda_{\bar k}(\widetilde{\Omega}^\eps)\quad\text{and}\quad
\lambda_{k_0^-}(D_\eps^-) = \lambda_{\bar
  k+1}(\widetilde{\Omega}^\eps),
\]
where $\widetilde\Omega^\eps$ is defined in \eqref{Omega_eps_tilde}
and $\bar k$ is as in \eqref{eq:15}.
\end{Proposition}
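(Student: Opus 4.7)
The plan is to exploit that $\widetilde\Omega^\eps = D_\eps^+ \cup D_\eps^-$ is a disjoint union, so that the spectrum of \eqref{eq:30} on $\widetilde\Omega^\eps$ is simply the (multiplicity-preserving) union $\sigma_p(D_\eps^+)\cup \sigma_p(D_\eps^-)$. The statement then becomes a bookkeeping claim about how many eigenvalues of $D_\eps^+$ and of $D_\eps^-$ lie below $\lambda_0$, together with the strict inequality $\lambda_{k_0^+}(D_\eps^+)<\lambda_{k_0^-}(D_\eps^-)$ forced by \eqref{derivate_risonanza}.

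First I would count eigenvalues in the limit. Since $\lambda_0=\lambda_{k_0^+}(D^+)$ is simple on $D^+$ and $\lambda_0=\lambda_{k_0^-}(D^-)$ is simple on $D^-$, the half-spaces $D^+$ and $D^-$ each have exactly $k_0^+-1$ and $k_0^--1$ eigenvalues strictly below $\lambda_0$ respectively. Adding these and using that $\lambda_0$ is a double eigenvalue on $D^+\cup D^-$, we read off $\bar k=k_0^++k_0^--1$, so that $\lambda_{\bar k}(D^-\cup D^+)=\lambda_{\bar k+1}(D^-\cup D^+)=\lambda_0$.

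Next I would transfer this count to the perturbed domains by the spectral continuity for Dirichlet eigenvalues with respect to compact domain perturbations (see \cite{daners}), applied separately to the pairs $D^+,D_\eps^+$ and $D^-,D_\eps^-$: namely $\lambda_j(D_\eps^\pm)\to \lambda_j(D^\pm)$ as $\eps\to 0^+$ for every $j$. In particular, for $\eps$ small enough the eigenvalues $\lambda_j(D_\eps^+)$ with $j<k_0^+$ and $\lambda_j(D_\eps^-)$ with $j<k_0^-$ remain strictly below $\lambda_0$ (and hence below $\lambda_{k_0^+}(D_\eps^+)$ and $\lambda_{k_0^-}(D_\eps^-)$, both of which approach $\lambda_0$), while the eigenvalues $\lambda_j(D_\eps^\pm)$ with $j>k_0^\pm$ stay uniformly above $\lambda_0+c$ for some $c>0$. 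This accounts for exactly $(k_0^+-1)+(k_0^--1)=\bar k-1$ eigenvalues of $\widetilde\Omega^\eps$ strictly below $\min\{\lambda_{k_0^+}(D_\eps^+),\lambda_{k_0^-}(D_\eps^-)\}$.

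Finally, I would combine the two sharp asymptotics \eqref{conseguenza1}--\eqref{conseguenza2}, which yield
\begin{equation*}
\lambda_{k_0^-}(D_\eps^-)-\lambda_{k_0^+}(D_\eps^+)
=\eps^N\mathfrak C(\Sigma)\left[\bigg(\dfrac{\partial u_0^+}{\partial x_1}(\mathbf e_1)\bigg)^{\!\!2}-\bigg(\dfrac{\partial u_0^-}{\partial x_1}(\mathbf 0)\bigg)^{\!\!2}\right]+o(\eps^N),
\end{equation*}
so that under the non-symmetry condition \eqref{derivate_risonanza} the right-hand side is strictly positive for $\eps$ small. Therefore for such $\eps$ the $\bar k$-th eigenvalue of $\widetilde\Omega^\eps$ is $\lambda_{k_0^+}(D_\eps^+)$ and the $(\bar k+1)$-th is $\lambda_{k_0^-}(D_\eps^-)$, which is the claim. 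The only conceptual point to be careful about is the uniform spectral continuity needed in the third step, but this is a standard consequence of the resolvent continuity proved in \cite{daners} for the domains $D^\pm\subset D_\eps^\pm$ obtained by attaching an $\eps$-handle.
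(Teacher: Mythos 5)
Your proposal is correct and follows essentially the same route as the paper: identify $\sigma_p(\widetilde\Omega^\eps)$ with the union $\sigma_p(D_\eps^+)\cup\sigma_p(D_\eps^-)$, use the expansions \eqref{conseguenza1}--\eqref{conseguenza2} to deduce $\lambda_{k_0^+}(D_\eps^+)<\lambda_{k_0^-}(D_\eps^-)$ for small $\eps$ under \eqref{derivate_risonanza}, and invoke the spectral continuity of \cite{daners} to settle the labelling. The only difference is that you spell out explicitly the eigenvalue count ($\bar k=k_0^++k_0^--1$ and the $\bar k-1$ eigenvalues staying strictly below $\lambda_0$) that the paper compresses into its citation of Daners; this is a welcome clarification rather than a change of method.
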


\begin{figure}[h]
\begin{center}
\psset{unit=0.5cm}
\begin{pspicture}(0,0)(10,5)
\psaxes*[labels=none,ticks=none]{->}(10,5)
\uput[270](10,0){$\eps$}
\uput[180](0,5){$\lambda_\eps$}
\uput[180](0,4){$\lambda_0$}
\uput[0](5,3){$\lambda_{k_0^-}(D_\eps^-)$}
\uput[0](4,0.5){$\lambda_{k_0^+}(D_\eps^+)$}
\psplot{0}{4}{ x 2 exp -0.2 mul 4 add}
\psplot{0}{5}{ x 2 exp -0.05 mul 4 add}
\psline[linewidth=.5pt,linestyle=dashed](0,4)(5,4)
\psdot[dotstyle=*](0,4)
\end{pspicture}
\end{center}
\caption{Two different branches of eigenvalues approximating the same double eigenvalue.}\label{fig:2}
\end{figure}
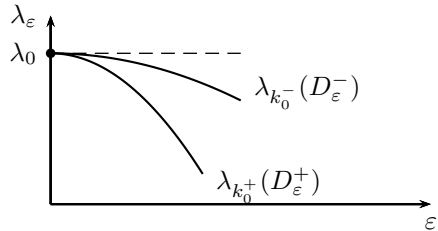

\begin{pf}
We note that 
$\sigma_p(\widetilde{\Omega}_\eps) = \sigma_p(D_\eps^+)\cup \sigma_p(D_\eps^-)$. 
Expansions \eqref{conseguenza1} and \eqref{conseguenza2} together with
assumptions \eqref{ipotesi_risonanza2}, \eqref{ipotesi_risonanza3} yield 
\[
\lambda_{k_0^+}(D_\eps^+) - \lambda_{k_0^-}(D_\eps^-) = \eps^N
\mathfrak C(\Sigma) \left( \bigg(\dfrac{\partial u_0^-}{\partial
    x_1}(\mathbf 0)\bigg)^{\!\!2} - \bigg(\dfrac{\partial
    u_0^+}{\partial x_1}(\mathbf e_1)\bigg)^{\!\!2} \right) +
o(\eps^N), \quad \text{as }\eps\to 0^+,
\]
and hence \eqref{derivate_risonanza} implies that 
\[
\lambda_{k_0^+}(D_\eps^+) - \lambda_{k_0^-}(D_\eps^-)<0
\]
for $\eps$ sufficiently small, which gives the conclusion in view of the convergence of
eigenvalues on $\widetilde\Omega^\eps$ proved by Daners in \cite{daners}.
\end{pf}

We now evaluate the difference between corresponding eigenvalues on
the dumbbell domain $\Omega^\eps$ and on the disconnected  domain
$\widetilde \Omega^\eps$.

\begin{Lemma}\label{l:tildeomegaeps}
  For $\ell=\bar k, \bar k +1$, $ \lambda_{\ell} (\widetilde{\Omega}^\eps) =
  \lambda_{\ell}(\Omega^\eps) + 
O\big
  (\eps^{\frac{N-1}2}e^{-\frac{\sqrt{\lambda_1(\Sigma)}}{32\eps}}\big)$ as $\eps \to 0^+$.
\end{Lemma}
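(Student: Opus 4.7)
The inequality $\lambda_\ell(\Omega^\eps)\le\lambda_\ell(\widetilde\Omega^\eps)$ is the easy half. Indeed, $\widetilde\Omega^\eps\subset\Omega^\eps$ and, by \eqref{eq:p_risonanza}, $p$ vanishes on the channel $\mathcal C_\eps$; hence trivial extension by zero sends $\mathcal D^{1,2}(\widetilde\Omega^\eps)$ into $\mathcal D^{1,2}(\Omega^\eps)$ preserving both $\int|\nabla v|^2dx$ and $\int pv^2dx$, and the Courant--Fischer minimax characterization yields the bound.

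For the reverse inequality, the plan is to use a cut-off eigenfunction method. Pick $p$-orthonormal eigenfunctions $u_1^\eps,\dots,u_{\bar k+1}^\eps$ on $\Omega^\eps$ associated to $\lambda_j^\eps:=\lambda_j(\Omega^\eps)$, and introduce a smooth cut-off $\eta=\eta(x_1)$ with $\eta\equiv1$ for $x_1\le 1/8$ or $x_1\ge 7/8$, $\eta\equiv0$ on $[1/4,3/4]$, and bounded derivatives. Set $\tilde u_j^\eps:=\eta u_j^\eps$. These functions are supported in $\widetilde\Omega^\eps$ (the gap $\{1/4<x_1<3/4\}\times\eps\Sigma$ sits in the zero set of $\eta$), vanish on $\partial\widetilde\Omega^\eps$, and hence lie in $\mathcal D^{1,2}(\widetilde\Omega^\eps)$. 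Since $\eta\equiv1$ on $\mathop{\rm supp}p$ by \eqref{eq:p_risonanza}, the $\tilde u_j^\eps$ are still $p$-orthonormal. Applying the minimax characterization to $E_\ell:=\mathop{\rm span}\{\tilde u_1^\eps,\dots,\tilde u_\ell^\eps\}$ reduces the upper bound on $\lambda_\ell(\widetilde\Omega^\eps)$ to estimating the largest eigenvalue of the symmetric matrix $A=(a_{ij})$ with $a_{ij}:=\int\nabla\tilde u_i^\eps\cdot\nabla\tilde u_j^\eps\,dx$.

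The core computation uses the identity $a_{ij}=\int\eta^2\nabla u_i^\eps\cdot\nabla u_j^\eps\,dx-\int u_i^\eps u_j^\eps\,\eta\Delta\eta\,dx$ (easily obtained from integration by parts of the cross-term $\int\eta\nabla\eta\cdot\nabla(u_i^\eps u_j^\eps)$), together with $\int\nabla u_i^\eps\cdot\nabla u_j^\eps\,dx=\lambda_i^\eps\delta_{ij}$. This gives $a_{ij}=\lambda_i^\eps\delta_{ij}-R_{ij}$, where the remainder $R_{ij}$ is a sum of two integrals localized in the middle piece $\mathcal C_\eps^{\rm mid}:=[1/8,7/8]\times\eps\Sigma$. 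The pointwise estimate \eqref{vjeps_soprasol}, combined with the uniform bound \eqref{vjeps_unif_lim}, yields $|u_j^\eps(x)|\le C e^{-\sqrt{\lambda_1(\Sigma)}/(32\eps)}$ on $\mathcal C_\eps^{\rm mid}$ (the minimum exponent $\sqrt{\lambda_1(\Sigma)}\cdot(1/8)/(4\eps)$ is attained at the endpoints of the transition interval), which immediately controls $|\int u_i^\eps u_j^\eps\,\eta\Delta\eta\,dx|=O(\eps^{N-1}e^{-\sqrt{\lambda_1(\Sigma)}/(16\eps)})$. The main technical point—and the step I expect to be the obstacle—is to obtain a matching exponentially small bound on $\int_{\mathcal C_\eps^{\rm mid}}|\nabla u_j^\eps|^2dx$. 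For this, crucially, \eqref{eq:p_risonanza} forces $p\equiv0$ on $\mathcal C_\eps$, so each $u_j^\eps$ is harmonic there; a Caccioppoli inequality obtained by testing $-\Delta u_j^\eps=0$ on $\mathcal C_\eps$ against $\zeta^2 u_j^\eps$, with a further auxiliary $x_1$-cut-off $\zeta$ equal to $1$ on $[1/8,7/8]$ and supported in $[1/16,15/16]$, gives $\int\zeta^2|\nabla u_j^\eps|^2dx\le 4\int(u_j^\eps)^2|\nabla\zeta|^2dx\le C\eps^{N-1}e^{-\sqrt{\lambda_1(\Sigma)}/(32\eps)}$, again via \eqref{vjeps_soprasol} evaluated on $\mathop{\rm supp}|\nabla\zeta|$. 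Cauchy--Schwarz then produces $|R_{ij}|\le C\eps^{N-1}e^{-\sqrt{\lambda_1(\Sigma)}/(32\eps)}$.

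The conclusion follows from Weyl's eigenvalue inequality applied to the symmetric $\ell\times\ell$ matrix $A=\mathop{\rm diag}(\lambda_1^\eps,\dots,\lambda_\ell^\eps)-R$: its largest eigenvalue is at most $\lambda_\ell^\eps+\ell\max_{i,j}|R_{ij}|$, so the minimax principle yields $\lambda_\ell(\widetilde\Omega^\eps)\le\lambda_\ell(\Omega^\eps)+O(\eps^{N-1}e^{-\sqrt{\lambda_1(\Sigma)}/(32\eps)})$. Since $\eps^{N-1}\le\eps^{(N-1)/2}$ for $\eps\in(0,1)$ and $N\ge3$, this is in particular $O(\eps^{(N-1)/2}e^{-\sqrt{\lambda_1(\Sigma)}/(32\eps)})$, completing the proof for $\ell=\bar k,\bar k+1$ (and in fact for every $\ell$).
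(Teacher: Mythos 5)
Your argument is correct and arrives at a slightly stronger polynomial prefactor ($\eps^{N-1}$ rather than $\eps^{(N-1)/2}$) than the paper records, but the organization is genuinely different. The paper expands $|\nabla(\eta v_j^\eps)|^2$ directly, cancels the $\int(\eta^2-1)|\nabla v_j^\eps|^2\le 0$ diagonal contribution by the sign of $\eta^2-1$, bounds the cross term $2\int\eta v_j^\eps\nabla\eta\cdot\nabla v_j^\eps$ by Cauchy--Schwarz using only the \emph{global} gradient bound $\|\nabla v_j^\eps\|_{L^2(\Omega^\eps)}^2=\lambda_j^\eps$ together with the pointwise decay of $v_j^\eps$ on $\operatorname{supp}\nabla\eta$, and treats the off-diagonal entries $\int\nabla(\eta v_i^\eps)\cdot\nabla(\eta v_j^\eps)$ separately by an integration by parts that kills the $\lambda_j^\eps\int(\eta^2-1)p\,v_i^\eps v_j^\eps$ term (again because $p\equiv0$ on $\mathcal C_\eps$). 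You instead integrate by parts the cross term once and for all to produce the clean identity $a_{ij}=\lambda_i^\eps\delta_{ij}-R_{ij}$, and then obtain the \emph{local} exponential smallness of $\int_{\mathcal C_\eps^{\rm mid}}|\nabla u_j^\eps|^2dx$ via a Caccioppoli inequality, exploiting the harmonicity of $u_j^\eps$ on $\mathcal C_\eps$ which \eqref{eq:p_risonanza} guarantees. The Caccioppoli step is what the paper avoids; it buys you the better prefactor, and also a uniform treatment of diagonal and off-diagonal entries of the quadratic form through Weyl's inequality. Both routes rest on the same two ingredients — the pointwise decay \eqref{vjeps_soprasol}/\eqref{stima_puntuale_canale_vjeps} in the middle of the handle and the vanishing of $p$ on $\mathcal C_\eps$ — and both give the stated $O\big(\eps^{\frac{N-1}{2}}e^{-\sqrt{\lambda_1(\Sigma)}/(32\eps)}\big)$. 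One small remark: your auxiliary cut-off $\zeta$ has $\operatorname{supp}|\nabla\zeta|\subset[1/16,1/8]\cup[7/8,15/16]$, so \eqref{vjeps_soprasol} there gives $|u_j^\eps|\lesssim e^{-\sqrt{\lambda_1(\Sigma)}/(64\eps)}$, and it is the \emph{square} that produces the $e^{-\sqrt{\lambda_1(\Sigma)}/(32\eps)}$ in your Caccioppoli bound — the arithmetic works out, but it is worth writing the exponent $1/16$ explicitly so the reader doesn't mistake it for the $1/8$ threshold that governs the $\eta\Delta\eta$ term.
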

\begin{pf}
For the sake of brevity, we prove the lemma just for $\ell=\bar k$;  the
proof for $\ell=\bar k+1$ is similar. 
By the Courant-Fisher \emph{minimax characterization} of eigenvalues
we have that 
\begin{equation}\label{eq:10bis}
  \lambda_{\bar k} (\widetilde{\Omega}^\eps) = \min\!\bigg\{\!\max_{u\in F\setminus \{0\}}\!
\dfrac{\int_{\widetilde{\Omega}^\eps} \abs{\nabla u}^2dx}{\int_{\widetilde{\Omega}^\eps}
  pu^2\,dx}:F \text{ is a subspace of $\Di{\widetilde{\Omega}^\eps}$ \text{such
    that }$\dim F= \bar k$}\bigg\}.
\end{equation}
 Let $\eta\in C^\infty(\R^N)$ be a smooth cut-off function such that 
\begin{align*}
&\eta\equiv 1 \text{ in }
 \{(x_1,x')\in\R^N: \text{either }x_1\leq 1/8\text{ or }x_1\geq
 7/8\},\\
 &\eta\equiv 0 \text{ in }
 \{(x_1,x')\in\R^N: 1/4\leq x_1\leq 3/4\},\\
 &0\leq\eta\leq1\text{ in }\R^N.
\end{align*}
For every 
$j=1,2,\dots,\bar k$ and $\eps$ small, we fix an eigenfunction $v_j^\eps\in \mathcal
D^{1,2}(\Omega^\eps)$ associated to $\lambda_j(\Omega^\eps)$  on
$\Omega^\eps$  such that
$\int_{\Omega^\eps}p|v_j^\eps|^2dx =1$
and $\int_{\Omega^\eps}\nabla v_j^\eps\cdot \nabla v_i^\eps\,dx=0$ if
$i\neq j$.
Choosing the $\bar k$-dimesional space $F=
\mathop{\rm span} \{ \eta v_1^\eps,\ldots,\eta v_{\bar k}^\eps \}$ in
\eqref{eq:10bis} we obtain that
\begin{align}\label{eq:48}
 0\leq \lambda_{\bar k} (\widetilde{\Omega}^\eps) - \lambda_{\bar k}(\Omega^\eps) &\leq
\max_{\substack{(\alpha_1,\dots,\alpha_{\bar k})\in
    \R^{\bar k}\\
\sum_{j=1}^{\bar k}\alpha_j^2=1}}
  \dfrac{\int_{\widetilde{\Omega}^\eps} |\nabla (\sum_{j=1}^{\bar k}\alpha_j \eta v_j^\eps )|^2dx}
 {\int_{\widetilde{\Omega}^\eps} p
(\sum_{j=1}^{\bar k}\alpha_j \eta v_j^\eps )^2dx} 
- \int_{\Omega^\eps} \abs{\nabla v_{\bar k}^\eps}^2dx \\
\notag& = \max_{\substack{(\alpha_1,\dots,\alpha_{\bar k})\in
    \R^{\bar k}\\
\sum_{j=1}^{\bar k}\alpha_j^2=1}}
\dfrac{\sum_{i,j=1}^{\bar k}\alpha_i\alpha_j
  \int_{\widetilde{\Omega}^\eps}
 \nabla(\eta v_i^\eps)\cdot\nabla(\eta v_j^\eps)\,dx}
{\sum_{i,j=1}^{\bar k}\alpha_i\alpha_j \int_{\widetilde{\Omega}^\eps} p \eta^2 v_i^\eps v_j^\eps\,dx}
- \int_{\Omega^\eps} \abs{\nabla v_{\bar k}^\eps}^2dx .
\end{align}
From Corollary \ref{stime_integrali_vjeps} it follows that, for every 
$j=1,\ldots,\bar k$, 
\begin{equation}\label{eq:49}
\int_{\widetilde{\Omega}^\eps} p \eta^2 |v_j^\eps|^2 dx
= 1 - \int_{
(1/8,7/8)\times(\eps\Sigma)} p (1-\eta^2) |v_j^\eps|^2dx =
 1+O\Big(\eps^{N-1}e^{-\frac{\sqrt{\lambda_1(\Sigma)}}{16\eps}}\Big) \quad \text{as }\eps\to 0^+,
\end{equation}
whereas, exploiting the orthogonality of eigenfunctions,  if $i\neq j$
we have that 
\begin{equation}\label{eq:50}
\int_{\widetilde{\Omega}^\eps} p \eta^2 v_i^\eps v_j^\eps \,dx
=- \int_{
(1/8,7/8)\times(\eps\Sigma)} p (1-\eta^2) v_i^\eps v_j^\eps\,dx =
O \Big (\eps^{N-1}e^{-\frac{\sqrt{\lambda_1(\Sigma)}}{16\eps}}\Big) \quad
\text{as }\eps\to 0^+.
\end{equation}
From \eqref{eq:48}, \eqref{eq:49}, and \eqref{eq:50} it follows that 
\begin{align}\label{eq:57}
  0&\leq \lambda_{\bar k} (\widetilde{\Omega}^\eps) - \lambda_{\bar
    k}(\Omega^\eps) \\
\notag&\leq \max_{\substack{(\alpha_1,\dots,\alpha_{\bar
        k})\in
      \R^{\bar k}\\
      \sum_{j=1}^{\bar k}\alpha_j^2=1}} \sum_{i,j=1}^{\bar
    k}\alpha_i\alpha_j \int_{\widetilde{\Omega}^\eps} \nabla(\eta
  v_i^\eps)\cdot\nabla(\eta v_j^\eps)\,dx
  - \int_{\Omega^\eps} \abs{\nabla v_{\bar k}^\eps}^2dx +O \Big
  (\eps^{N-1}e^{-\frac{\sqrt{\lambda_1(\Sigma)}}{16\eps}}\Big) \\
 \notag & = \max_{\substack{(\alpha_1,\dots,\alpha_{\bar
        k})\in
      \R^{\bar k}\\
      \sum_{j=1}^{\bar k}\alpha_j^2=1}}
\bigg\{ \alpha_{\bar k}^2
    \bigg(\int_{\widetilde{\Omega}^\eps} \abs{\nabla(\eta
        v_{\bar k}^\eps)}^2dx
      - \int_{\Omega^\eps} \abs{\nabla v_{\bar k}^\eps}^2dx\bigg) \\
\notag&\qquad\qquad+ \sum_{j=1}^{\bar k-1} \alpha_j^2 \bigg(
      \int_{\widetilde{\Omega}^\eps} \abs{\nabla(\eta
        v_j^\eps)}^2dx
      - \int_{\Omega^\eps} \abs{\nabla v_{\bar k}^\eps}^2dx \bigg) \\
 \notag &\qquad\qquad+\sum_{i\neq j}\alpha_i\alpha_j
    \int_{\widetilde{\Omega}^\eps} \nabla(\eta
    v_i^\eps)\cdot\nabla(\eta v_j^\eps)\,dx \bigg\} +O \Big
  (\eps^{N-1}e^{-\frac{\sqrt{\lambda_1(\Sigma)}}{16\eps}}\Big), \quad
  \text{as }\eps\to 0^+.
\end{align}
From Corollary \ref{stime_integrali_vjeps} it follows that 
\begin{align}\label{eq:51}
\int_{\widetilde{\Omega}^\eps} &\abs{\nabla(\eta
        v_{\bar k}^\eps)}^2dx
      - \int_{\Omega^\eps} \abs{\nabla v_{\bar k}^\eps}^2dx\\
\notag& =  \int_{\widetilde{\Omega}^\eps} \Big(\abs{\nabla\eta}^2 |v_{\bar
  k}^\eps|^2 + \eta^2 \abs{\nabla v_{\bar k}^\eps}^2 
+ 2\eta v_{\bar k}^\eps \nabla \eta\cdot\nabla v_{\bar
  k}^\eps\Big)\,dx - \int_{\Omega^\eps} \abs{\nabla v_{\bar k}^\eps}^2dx \\
\notag& \leq O \Big
  (\eps^{\frac{N-1}2}e^{-\frac{\sqrt{\lambda_1(\Sigma)}}{32\eps}}\Big)
  + \int_{\Omega^\eps} (\eta^2-1)\abs{\nabla v_{\bar k}^\eps}^2dx
\leq O \Big
  (\eps^{\frac{N-1}2}e^{-\frac{\sqrt{\lambda_1(\Sigma)}}{32\eps}}\Big),\quad \text{as }\eps\to 0^+,
\end{align}
\begin{align}\label{eq:55}
 \int_{\widetilde{\Omega}^\eps} \abs{\nabla(\eta v_j^\eps)}^2dx 
- \int_{\Omega^\eps} \abs{\nabla v_{\bar k}^\eps}^2dx 
& =  \int_{\widetilde{\Omega}^\eps} \eta^2\abs{\nabla v_j^\eps}^2dx
- \int_{\Omega^\eps} \abs{\nabla v_{\bar k}^\eps}^2dx +O\Big
  (\eps^{\frac{N-1}2}e^{-\frac{\sqrt{\lambda_1(\Sigma)}}{32\eps}}\Big)\\
\notag&\leq \lambda_j(\Omega^\eps) -\lambda_{\bar k}(\Omega^\eps) +O\Big
  (\eps^{\frac{N-1}2}e^{-\frac{\sqrt{\lambda_1(\Sigma)}}{32\eps}}\Big) \\
\notag&\leq O\Big
  (\eps^{\frac{N-1}2}e^{-\frac{\sqrt{\lambda_1(\Sigma)}}{32\eps}}\Big), \quad \text{as }\eps\to 0^+,
\end{align}
for all $j<\bar k$, and, if $i\neq j$, 
\begin{align}\label{eq:56}
\int_{\widetilde{\Omega}^\eps} &\nabla(\eta v_i^\eps)\cdot\nabla(\eta v_j^\eps)\,dx\\
\notag& = \int_{\widetilde{\Omega}^\eps} \eta^2 \nabla v_i^\eps\cdot\nabla
v_j^\eps\,dx 
+ \int_{\widetilde{\Omega}^\eps} \eta\nabla\eta\cdot(v_i^\eps\nabla
v_j^\eps+v_j^\eps\nabla
v_i^\eps)\,dx
+\int_{\widetilde{\Omega}^\eps} |\nabla\eta|^2 v_i^\eps v_j^\eps\,dx 
\\
\notag& = \int_{(1/8,7/8)\times(\eps\Sigma)} (\eta^2-1) \nabla v_i^\eps\cdot\nabla v_j^\eps\,dx+O\Big
  (\eps^{\frac{N-1}2}e^{-\frac{\sqrt{\lambda_1(\Sigma)}}{32\eps}}\Big) \\
\notag& = -2\int_{(1/8,7/8)\times(\eps\Sigma)} \eta v_i^\eps \nabla v_j^\eps\cdot \nabla \eta\,dx+O\Big
  (\eps^{\frac{N-1}2}e^{-\frac{\sqrt{\lambda_1(\Sigma)}}{32\eps}}\Big)  \\
\notag& = O\Big
  (\eps^{\frac{N-1}2}e^{-\frac{\sqrt{\lambda_1(\Sigma)}}{32\eps}}\Big), \quad \text{as }\eps\to 0^+,
\end{align}
where in the third equality we have tested  $-\Delta v_j^\eps =
\lambda_j^\eps p v_j^\eps $ 
 with 
$(\eta^2-1)v_i^\eps$ for $i\neq j$ and  integrated by parts over
$(1/8,7/8)\times(\eps\Sigma)$, using assumption \eqref{eq:p_risonanza}.

From \eqref{eq:57}, \eqref{eq:51}, \eqref{eq:55}, and \eqref{eq:56} it
follows that 
\begin{align*}
   0&\leq \lambda_{\bar k} (\widetilde{\Omega}^\eps) - \lambda_{\bar
    k}(\Omega^\eps)\leq 
O\Big
  (\eps^{\frac{N-1}2}e^{-\frac{\sqrt{\lambda_1(\Sigma)}}{32\eps}}\Big), \quad \text{as }\eps\to 0^+,
\end{align*}
thus yielding the conclusion.
\end{pf}

\begin{remark}\label{r:splitting_polinomiale}
  We observe that, under assumption \eqref{derivate_risonanza},
  expansions \eqref{conseguenza1}, \eqref{conseguenza2}, Proposition
  \ref{prop_risonanza} and Lemma \ref{l:tildeomegaeps} imply that the
  splitting of the two subsequent eigenvalues $\lambda_{\bar
    k}(\Omega^\eps),\lambda_{\bar k+1}(\Omega^\eps)$ approximating the
  same double eigenvalue $\lambda_{\bar
    k}(D^-\cup D^+)=\lambda_{\bar k+1}(D^-\cup D^+)$ has a polynomial vanishing order, i.e.
\[
\lambda_{\bar k+1}(\Omega^\eps)-\lambda_{\bar
    k}(\Omega^\eps)=\eps^N
\mathfrak C(\Sigma) \left(  \bigg(\dfrac{\partial
    u_0^+}{\partial x_1}(\mathbf e_1)\bigg)^{\!\!2}-\bigg(\dfrac{\partial u_0^-}{\partial
    x_1}(\mathbf 0)\bigg)^{\!\!2} \right) +
o(\eps^N), \quad \text{as }\eps\to 0^+.
\]  
We emphasize that non-symmetry assumption \eqref{derivate_risonanza}
 is crucial for having a polynomial splitting: indeed it was proved in
 \cite{BHM} that in the case of a symmetric dumbbell domain the
 splitting of the first two eigenvalues vanishes with exponential rate.
\end{remark}

Combining \eqref{conseguenza1}, \eqref{conseguenza2} with Proposition
\ref{prop_risonanza} and Lemma \ref{l:tildeomegaeps} we derive the
asymptotics of the eigenvalues $ \lambda_{\bar
    k}(\Omega^\eps)$ and $ \lambda_{\bar
    k+1}(\Omega^\eps)$ thus proving Theorem \ref{t:reso}.

\begin{pfn}{Theorem \ref{t:reso}}
From  Proposition
\ref{prop_risonanza} and Lemma \ref{l:tildeomegaeps} we have that 
\begin{align*}
\lambda_0-\lambda_{\bar
    k}(\Omega^\eps)&=(\lambda_0-\lambda_{\bar
    k}(\widetilde\Omega^\eps))+(\lambda_{\bar
    k}(\widetilde\Omega^\eps)-\lambda_{\bar
    k}(\Omega^\eps)\big)\\
&=(\lambda_0-\lambda_{k_0^+}(D_\eps^+))+O\big
  (\eps^{\frac{N-1}2}e^{-\frac{\sqrt{\lambda_1(\Sigma)}}{32\eps}}\big),\quad\text{as
  }\eps\to 0^+,
\end{align*}
and
\begin{align*}
\lambda_0-\lambda_{\bar
    k+1}(\Omega^\eps)&=(\lambda_0-\lambda_{\bar
    k+1}(\widetilde\Omega^\eps))+(\lambda_{\bar
    k+1}(\widetilde\Omega^\eps)-\lambda_{\bar
    k+1}(\Omega^\eps)\big)\\
&=(\lambda_0-\lambda_{k_0^-}(D_\eps^-))+O\big
  (\eps^{\frac{N-1}2}e^{-\frac{\sqrt{\lambda_1(\Sigma)}}{32\eps}}\big),\quad\text{as
  }\eps\to 0^+.
\end{align*}
Hence, by \eqref{conseguenza1} and \eqref{conseguenza2} we obtain 
\begin{align*}
\lambda_0-\lambda_{\bar
    k}(\Omega^\eps)=\eps^N \mathfrak C(\Sigma) \big(\tfrac{\partial u_0^+}{\partial
  x_1}(\mathbf e_1)\big)^{2} + o(\eps^N)\quad\text{and}\quad
\lambda_0-\lambda_{\bar
    k+1}(\Omega^\eps)=\eps^N \mathfrak C(\Sigma) \big(\tfrac{\partial u_0^-}{\partial
  x_1}(\mathbf 0)\big)^{2} + o(\eps^N)
\end{align*}
thus completing the proof.
\end{pfn}

A key ingredient for the proof of Theorem \ref{t:reso_autofun} is the
following spectral estimate.
\begin{Lemma}\label{l:spectral_estimate}
  Under the same assumptions as in Theorem \ref{t:reso}, let
\begin{align*}
&L_\eps^+:=-\Delta-\lambda_{\bar
    k+1}(\Omega^\eps)p:\Di{D_\eps^+}\to (\Di{D_\eps^+})^\star, \\
&L_\eps^-:=-\Delta-\lambda_{\bar
    k}(\Omega^\eps)p:\Di{D_\eps^-}\to (\Di{D_\eps^-})^\star,
\end{align*}
be defined as 
\begin{align*}
&\phantom{a}_{(\mathcal D^{1,2}(D_\eps^+))^\star}\big\langle  L_\eps^+ u,v
\big\rangle_{\mathcal D^{1,2}(D_\eps^+)}=\int_{D_\eps^+}\nabla
u\cdot\nabla v\,dx-\lambda_{\bar
    k+1}(\Omega^\eps)\int_{D_\eps^+} puv\,dx, \quad u,v\in
  \Di{D_\eps^+},\\
&\phantom{a}_{(\mathcal D^{1,2}(D_\eps^-))^\star}\big\langle  L_\eps^- u,v
\big\rangle_{\mathcal D^{1,2}(D_\eps^-)}=\int_{D_\eps^-}\nabla
u\cdot\nabla v\,dx-\lambda_{\bar k}(\Omega^\eps)\int_{D_\eps^-} puv\,dx, \quad u,v\in
  \Di{D_\eps^-},
\end{align*}
where $(\mathcal D^{1,2}(D_\eps^+))^\star$ is 
the dual space of $\mathcal D^{1,2}(D_\eps^+)$ and 
 $(\mathcal D^{1,2}(D_\eps^-))^\star$ is 
the dual space of $\mathcal D^{1,2}(D_\eps^-)$.
Then, for $\eps$ sufficiently small, $L_\eps^+$ and $L_\eps^-$ are
invertible and 
\begin{align*}
&  \|(L_\eps^+)^{-1}\|_{\mathcal
  L((\Di{D_\eps^+})^\star,\Di{D_\eps^+})}=O(\eps^{-N}), \quad\text{as }\eps\to0^+,\\
&  \|(L_\eps^-)^{-1}\|_{\mathcal
  L((\Di{D_\eps^-})^\star,\Di{D_\eps^-})}=O(\eps^{-N}), \quad\text{as }\eps\to0^+.
\end{align*}
\end{Lemma}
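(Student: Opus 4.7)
The approach is to reduce invertibility of $L_\eps^+$ and $L_\eps^-$ to the location of $\lambda_{\bar k+1}(\Omega^\eps)$, resp.\ $\lambda_{\bar k}(\Omega^\eps)$, with respect to the weighted Dirichlet spectra $\sigma_p(D_\eps^\pm)$, and then to quantify the relevant spectral gap by combining the sharp asymptotics already at our disposal. I focus on $L_\eps^+$; the treatment of $L_\eps^-$ is entirely analogous.

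The first step is to factorize $L_\eps^+=A_+\circ (I-\lambda_{\bar k+1}(\Omega^\eps)\,K_+)$, where $A_+:\Di{D_\eps^+}\to(\Di{D_\eps^+})^\star$ is the Riesz isomorphism associated with the Dirichlet Laplacian, and $K_+:=A_+^{-1}\circ(u\mapsto pu):\Di{D_\eps^+}\to\Di{D_\eps^+}$. Since $p\in L^{N/2}(\R^N)$, the operator $K_+$ is compact (by Rellich--Kondrachov combined with H\"older--Sobolev) and symmetric in the $\Di{D_\eps^+}$-inner product, since $\langle K_+u,v\rangle_{\Di{D_\eps^+}}=\int_{D_\eps^+}puv\,dx$. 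Its nonzero eigenvalues are $1/\lambda_k(D_\eps^+)$, with eigenfunctions $\varphi_k^\eps$ normalized by $\int_{D_\eps^+}p(\varphi_k^\eps)^2\,dx=1$. Because $\|A_+^{-1}\|_{\mathcal L((\Di{D_\eps^+})^\star,\Di{D_\eps^+})}=1$, provided $\lambda_{\bar k+1}(\Omega^\eps)\notin\sigma_p(D_\eps^+)$ the spectral theorem yields
\begin{equation*}
\|(L_\eps^+)^{-1}\|_{\mathcal L((\Di{D_\eps^+})^\star,\Di{D_\eps^+})}
\leq \max\!\bigg(1,\ \sup_{k\geq 1}\frac{\lambda_k(D_\eps^+)}{|\lambda_k(D_\eps^+) - \lambda_{\bar k+1}(\Omega^\eps)|}\bigg).
\end{equation*}

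For the second step I estimate this supremum. By Daners' spectral continuity \cite{daners} and the simplicity of $\lambda_{k_0^+}(D^+)=\lambda_0$ on $D^+$, all eigenvalues $\lambda_k(D_\eps^+)$ with $k\neq k_0^+$ stay at a distance bounded below uniformly in $\eps$ from $\lambda_0$, and hence from $\lambda_{\bar k+1}(\Omega^\eps)\to\lambda_0$; the corresponding terms in the supremum are $O(1)$. For the critical index $k_0^+$, combining the expansion \eqref{conseguenza1} with the asymptotics of $\lambda_{\bar k+1}(\Omega^\eps)$ provided by Theorem \ref{t:reso}, I compute
\begin{equation*}
\lambda_{k_0^+}(D_\eps^+)-\lambda_{\bar k+1}(\Omega^\eps)
= -\eps^N\mathfrak C(\Sigma)\bigg[\bigg(\tfrac{\partial u_0^+}{\partial x_1}(\e)\bigg)^{\!\!2}
-\bigg(\tfrac{\partial u_0^-}{\partial x_1}(\mathbf 0)\bigg)^{\!\!2}\bigg]+o(\eps^N),
\end{equation*}
whose absolute value is exactly of order $\eps^N$, thanks to the non-symmetry condition \eqref{eq:61}. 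Together with the boundedness of $\lambda_{k_0^+}(D_\eps^+)$, this simultaneously guarantees that $\lambda_{\bar k+1}(\Omega^\eps)\notin\sigma_p(D_\eps^+)$ for $\eps$ sufficiently small (so that $L_\eps^+$ is invertible) and yields the claimed bound $\|(L_\eps^+)^{-1}\|=O(\eps^{-N})$. The same scheme, using \eqref{conseguenza2} and the asymptotics of $\lambda_{\bar k}(\Omega^\eps)$, handles $L_\eps^-$.

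The main obstacle lies in pinning down the precise leading-order size of the gap: both $\lambda_{\bar k+1}(\Omega^\eps)$ and $\lambda_{k_0^+}(D_\eps^+)$ converge to the same simple eigenvalue $\lambda_0$ at the common rate $\eps^N$, so their difference is the subtraction of two quantities of identical order, and only the strict inequality \eqref{eq:61} prevents cancellation at that order. Were the symmetry \eqref{eq:61} violated, a much finer expansion would be required, taking into account the exponentially small corrections furnished by Lemma \ref{l:tildeomegaeps} together with higher-order terms in \eqref{conseguenza1}--\eqref{conseguenza2}; the asymmetric regime is precisely what makes the bound $O(\eps^{-N})$ tight and accessible.
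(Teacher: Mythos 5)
Your proposal is correct and follows essentially the same route as the paper: the paper likewise reduces the bound to the classical spectral estimate $\|(L_\eps^+)^{-1}\|\leq\sup_{\lambda\in\sigma_p(D_\eps^+)}\lambda/|\lambda-\lambda_{\bar k+1}(\Omega^\eps)|$ and then computes $\mathop{\rm dist}(\lambda_{\bar k+1}(\Omega^\eps),\sigma_p(D_\eps^+))=\lambda_{\bar k+1}(\Omega^\eps)-\lambda_{k_0^+}(D_\eps^+)$ to be exactly of order $\eps^N$ by combining \eqref{conseguenza1}--\eqref{conseguenza2} with Theorem \ref{t:reso} and the non-symmetry condition \eqref{derivate_risonanza}. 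Your explicit factorization through the compact operator $K_+$ and the splitting of the supremum into critical and non-critical indices simply spell out what the paper compresses into the phrase ``classical spectral estimates.''
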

\begin{pf}
  From \eqref{conseguenza1}, \eqref{conseguenza2}, and Theorem
  \ref{t:reso}, it follows that, for $\eps$ sufficiently small, 
\begin{align}
\label{eq:59}\mathop{\rm dist\,}(\lambda_{\bar
    k+1}(\Omega^\eps),\sigma_p(D_\eps^+))&=\lambda_{\bar
    k+1}(\Omega^\eps)-\lambda_{k_0^+}(D_\eps^+)\\
\notag&=
\eps^N
\mathfrak C(\Sigma) \left(  \bigg(\dfrac{\partial
    u_0^+}{\partial x_1}(\mathbf e_1)\bigg)^{\!\!2}-\bigg(\dfrac{\partial u_0^-}{\partial
    x_1}(\mathbf 0)\bigg)^{\!\!2} \right) +
o(\eps^N), \quad \text{as }\eps\to 0^+,
\end{align}
and 
\begin{align}\label{eq:60}
\mathop{\rm dist\,}(\lambda_{\bar
    k}(\Omega^\eps),\sigma_p(D_\eps^-))&=\lambda_{k_0^-}(D_\eps^-)-\lambda_{\bar k}(\Omega^\eps)\\
\notag&=
\eps^N
\mathfrak C(\Sigma) \left(  \bigg(\dfrac{\partial
    u_0^+}{\partial x_1}(\mathbf e_1)\bigg)^{\!\!2}-\bigg(\dfrac{\partial u_0^-}{\partial
    x_1}(\mathbf 0)\bigg)^{\!\!2} \right) +
o(\eps^N), \quad \text{as }\eps\to 0^+.
\end{align}
In particular, by assumption \eqref{derivate_risonanza}, we have that $\lambda_{\bar
    k+1}(\Omega^\eps)\not\in\sigma_p(D_\eps^+)$ and $\lambda_{\bar
    k}(\Omega^\eps)\not\in\sigma_p(D_\eps^-)$, and therefore $L_\eps^+$ and $L_\eps^-$ are
invertible. Moreover, by classical spectral estimates, \eqref{eq:59},
and \eqref{eq:60}, it turns out
that 
\begin{align*}
  \|(L_\eps^+)^{-1}\|_{\mathcal
  L((\Di{D_\eps^+})^\star,\Di{D_\eps^+})}&\leq \sup_{\lambda\in \sigma_p(D_\eps^+)}\frac{\lambda}{|\lambda-\lambda_{\bar
    k+1}(\Omega^\eps)|}\\
&\leq 
1+\frac{\lambda_{\bar k+1}(\Omega^\eps)}{\mathop{\rm dist\,}(\lambda_{\bar
    k+1}(\Omega^\eps),\sigma_p(D_\eps^+))}=O(\eps^{-N}),\quad\text{as }\eps\to0^+,\\
\|(L_\eps^-)^{-1}\|_{\mathcal
  L((\Di{D_\eps^-})^\star,\Di{D_\eps^-})}&\leq \sup_{\lambda\in \sigma_p(D_\eps^-)}\frac{\lambda}{|\lambda-\lambda_{\bar
    k}(\Omega^\eps)|}\\
&\leq 
1+\frac{\lambda_{\bar k}(\Omega^\eps)}{\mathop{\rm dist\,}(\lambda_{\bar
    k}(\Omega^\eps),\sigma_p(D_\eps^-))}=O(\eps^{-N}),\quad\text{as }\eps\to0^+.
\end{align*}
The proof is thereby complete.
\end{pf}

\begin{pfn}{Theorem \ref{t:reso_autofun}}
  Due to simplicity of the eigenvalue
  $\lambda_0=\lambda_{k_0^+}(D^+)=\lambda_{k_0^-}(D^-)$ on each
  component $D^-$ and $D^+$, it is is enough to prove estimates
  \eqref{eq:62} and \eqref{eq:7} for any family of eigenfunctions 
$v_{\bar k}^\eps \in{\mathcal
    D}^{1,2}(\Omega^\eps)$ on $\Omega^\eps$
associated to 
$\lambda_{\bar k}(\Omega^\eps)$ 
and any family of eigenfunctions 
$v_{\bar k+1}^\eps \in{\mathcal
    D}^{1,2}(\Omega^\eps)$ on $\Omega^\eps$
associated to 
$\lambda_{\bar k+1}(\Omega^\eps)$ 
such that 
\[
\int_{\Omega^\eps}p(x) |v_{\bar k}^\eps(x)|^2\,dx=1,\quad
\int_{\Omega^\eps}p(x) |v_{\bar k+1}^\eps(x)|^2\,dx=1.
\]
 We prove only \eqref{eq:62}, being the
proof of \eqref{eq:7} analogous.
 Let $\eta\in C^\infty(\R^N)$ be a smooth cut-off function such that 
\begin{equation*}
\eta\equiv 1 \text{ in }
 \{(x_1,x')\in\R^N:x_1\leq 1/8\},\quad
 \eta\equiv 0 \text{ in }
 \{(x_1,x')\in\R^N: x_1\geq 1/4\},\quad 0\leq\eta\leq1\text{ in }\R^N.
\end{equation*}
A direct computation shows that, letting $L_\eps^-$ as in Lemma
\ref{l:spectral_estimate},  
\[
L_\eps^-(\eta v_{\bar k}^\eps)=h_\eps
\]
where $h_\eps\in (\Di{D_\eps^-})^\star$ acts as
\[
\phantom{a}_{(\mathcal D^{1,2}(D_\eps^-))^\star}\big\langle  h_\eps,v
\big\rangle_{\mathcal D^{1,2}(D_\eps^-)}=\int_{D_\eps^-}
(\Delta \eta v+2\nabla \eta\cdot \nabla v) v_{\bar k}^\eps\,dx,\quad
\text{for every }v\in \Di{D_\eps^-}.
\]
From Corollary  \ref{stime_integrali_vjeps} it follows that 
\[
\|h_\eps\|_{ (\Di{D_\eps^-})^\star}=O\big
  (\eps^{\frac{N-1}2}e^{-\frac{\sqrt{\lambda_1(\Sigma)}}{32\eps}}\big),\quad\text{as
  }\eps\to 0^+,
\]
and then Lemma \ref{l:spectral_estimate} implies that 
\begin{align*}
\|\eta v_{\bar
  k}^\eps\|_{\Di{D_\eps^-}}&=\|(L_\eps^-)^{-1}(h_\eps)\|_{\Di{D_\eps^-}}
\leq \|(L_\eps^-)^{-1}\|_{\mathcal
  L((\Di{D_\eps^-})^\star,\Di{D_\eps^-})}\|h_\eps\|_{
  (\Di{D_\eps^-})^\star}\\
&=O(\eps^{-N}) O\big
  (\eps^{\frac{N-1}2}e^{-\frac{\sqrt{\lambda_1(\Sigma)}}{32\eps}}\big)=
 O\big
  (\eps^{-\frac{N+1}2}e^{-\frac{\sqrt{\lambda_1(\Sigma)}}{32\eps}}\big),\quad\text{as
  }\eps\to 0^+.
\end{align*}
Estimate \eqref{eq:62} is thereby proved.
\end{pfn}

\begin{pfn}{Theorem \ref{t:reso_autofun_conv}}
We prove only \eqref{eq:6},  being the proof of \eqref{eq:16}
analogous. To this aim, we first observe that, in view of the
simplicity of the eigenvalue $\lambda_{k_0^+}(D^+)$ and by Theorem
\ref{teo_asintotico_autofunzioni} (adapted to the easier case of the
perturbed domain $D_\eps^+$), there exists  
a family of eigenfunctions 
$v_\eps^+ \in{\mathcal
    D}^{1,2}(D_\eps^+)$ on $D_\eps^+$
associated to 
$\lambda_{k_0^+}(D_\eps^+)$ such that 
\begin{equation*}
\begin{cases}
-\Delta v_\eps^+=\lambda_{k_0^+}(D_\eps^+) p v_\eps^+,&\text{in }D_\eps^+,\\
 v_\eps^+ =0,&\text{on }\partial D_\eps^+,\\
\int_{D_\eps^+}p(x) |v_\eps^+ (x)|^2\,dx=1,\end{cases}\quad
\end{equation*}
and 
\begin{equation}\label{eq:18}
\lim_{\eps\to 0^+}
\eps^{-N}\|v_\eps^+-u_0^+\|^2_{\Di{\R^N}}
=
\bigg(\frac{\partial u_0^+}{\partial x_1}(\e)\bigg)^{\!\!2}
\mathfrak C(\Sigma).
\end{equation}
In view of \eqref{eq:18}, Theorem \ref{t:reso_autofun}, and 
Corollary \ref{stime_integrali_vjeps}, to prove \eqref{eq:6} it is
enough to show that 
\begin{equation}\label{eq:63}
  \|\eta v_{\bar
    k}^\eps-v_\eps^+\|^2_{\Di{D_\eps^+}}=o(\eps^N),\quad\text{as
  }\eps\to 0^+,
\end{equation}
for some $\eta\in C^\infty(\R^N)$ being a smooth cut-off function such that 
\begin{equation*}
\eta\equiv 1 \text{ in }
 \{(x_1,x')\in\R^N:x_1\geq 7/8\},\quad
 \eta\equiv 0 \text{ in }
 \{(x_1,x')\in\R^N: x_1\leq 3/4\},\quad 0\leq\eta\leq1\text{ in }\R^N.
\end{equation*}
To prove \eqref{eq:63}, we argue as in section
\ref{sec:rate-conv-eigenf} and  consider the operator
\begin{align*}
  F_\eps:\  &\R \times \Di{D_\eps^+}  \longrightarrow  \R \times
  (\Di{D_\eps^+})^\star\\
 &(\lambda,u) \quad \longmapsto \quad (\|u\|_{\Di{D_\eps^+}}^2 -\lambda_{k_0^+}(D_\eps^+), -\Delta
  u-\lambda p u),
\end{align*}
   where, for all
 $u\in \Di{D_\eps^+}$,  $-\Delta
  u-\lambda pu\in (\Di{D_\eps^+})^\star$ acts as 
\[
\phantom{a}_{(\mathcal D^{1,2}(D_\eps^+))^\star}\big\langle 
-\Delta
  u-\lambda p u
,
v
\big\rangle_{\mathcal D^{1,2}(D_\eps^+)}=\int_{D_\eps^+}\nabla u(x)\cdot\nabla
v(x)\,dx-\lambda \int_{D_\eps^+} p(x)u(x)v(x)\,dx.
\]
We observe that 
$F_\eps(\lambda_{k_0^+}(D_\eps^+),v_\eps^+)=(0,0)$ and $F_\eps$ is
Frech\'{e}t-differentiable at $(\lambda_{k_0^+}(D_\eps^+),v_\eps^+)$;
moreover, arguing as in the proof of Lemma \ref{F_frechet}, we can
prove that, since $\lambda_{k_0^+}(D_\eps^+)$ is simple per small
$\eps$, the 
Frech\'{e}t-differential $dF_\eps(\lambda_{k_0^+}(D_\eps^+),v_\eps^+)\in \mathcal L(
\R \times \Di{D_\eps^+},\R \times (\Di{D_\eps^+})^\star)$ is
invertible. Due to the fact that $\lambda_{k_0^+}(D_\eps^+)$ converges
to a simple eigenvalue on $D^+$, it is also easy to verify that 
\begin{equation}\label{eq:64}
\|(dF_\eps(\lambda_{k_0^+}(D_\eps^+),v_\eps^+)^{-1}\|_{ \mathcal L(
\R \times (\Di{D_\eps^+})^\star,\R \times
\Di{D_\eps^+})}=O(1),\quad\text{as }\eps\to 0^+.
\end{equation}
Since 
\begin{multline*}
 F_\eps(\lambda_{\bar k}(\Omega^\eps),\eta v_{\bar k}^{\eps}) =
 dF_\eps(\lambda_{k_0^+}(D_\eps^+),v_\eps^+) 
(\lambda_{\bar k}(\Omega^\eps)-\lambda_{k_0^+}(D_\eps^+), \eta v_{\bar
  k}^{\eps}-v_\eps^+)\\
+ o\big(|\lambda_{\bar k}(\Omega^\eps)-\lambda_{k_0^+}(D_\eps^+)| + \|
 \eta v_{\bar
  k}^{\eps}-v_\eps^+\|_{\Di{D_\eps^+}}\big),\quad\text{as }\eps\to 0^+,
\end{multline*}
from \eqref{eq:64} we deduce that, for $\eps$ small, 
\begin{equation}\label{eq:65}
|\lambda_{\bar k}(\Omega^\eps)-\lambda_{k_0^+}(D_\eps^+)| + \| \eta
v_{\bar k}^{\eps}-v_\eps^+\|_{\Di{D_\eps^+}}\leq {\rm
  const\,}\|F_\eps(\lambda_{\bar k}(\Omega^\eps),\eta v_{\bar
  k}^{\eps})\|_{\R \times (\Di{D_\eps^+})^\star }.
\end{equation}
We have that 
\begin{equation}\label{eq:66}
F_\eps(\lambda_{\bar k}(\Omega^\eps),\eta v_{\bar
  k}^{\eps}) = (\mu_\eps,w_\eps)
  = 
\big(\|\eta v_{\bar
  k}^{\eps}\|_{\Di{D_\eps^+}}^2 -\lambda_{k_0^+}(D_\eps^+), -\Delta
  (\eta v_{\bar
  k}^{\eps})-\lambda_{\bar k}(\Omega^\eps) p \eta v_{\bar
  k}^{\eps}\big).
    \end{equation}
By direct calculations and using Proposition \ref{prop_risonanza},
Lemma \ref{l:tildeomegaeps}, Corollary  \ref{stime_integrali_vjeps},
and Theorem  \ref{t:reso_autofun}, we have that 
\begin{align}\label{eq:67}
  \mu_\eps&=\|\eta v_{\bar k}^{\eps}\|_{\Di{D_\eps^+}}^2
  -\lambda_{k_0^+}(D_\eps^+)=\lambda_{\bar
    k}(\Omega^\eps)-\lambda_{k_0^+}(D_\eps^+)+\int_{\Omega^{\eps}}|\nabla\eta|^2|v_{\bar
    k}^{\eps}|^2\,dx\\
\notag&\quad+2 \int_{\Omega^{\eps}}
v_{\bar k}^{\eps}\eta\nabla\eta\cdot\nabla v_{\bar
    k}^{\eps}\,dx+\int_{D^-\cup
    ([0,7/8)\times(\eps\Sigma))}(\eta^2-1)|\nabla v_{\bar
    k}^{\eps}|^2\,dx\\
\notag&=\lambda_{\bar
    k}(\Omega^\eps)-\lambda_{k_0^+}(D_\eps^+)+\int_{D^-\cup
    ((3/4,7/8)\times(\eps\Sigma))}|\nabla\eta|^2|v_{\bar
    k}^{\eps}|^2\,dx\\
\notag&\quad+\lambda_{\bar
    k}(\Omega^\eps)\int_{D^-\cup
    ([0,7/8)\times(\eps\Sigma))}p(\eta^2-1)|v_{\bar k}^{\eps}|^2dx=o(\eps^N),\quad\text{as }\eps\to 0^+.
\end{align}
In order to estimate $\|w_\eps\|_{(\Di{D_\eps^+})^\star}$ we observe
that $w_\eps\in (\Di{D_\eps^+})^\star$ acts as
\[
\phantom{a}_{(\mathcal D^{1,2}(D_\eps^+))^\star}\big\langle  w_\eps,v
\big\rangle_{\mathcal D^{1,2}(D_\eps^+)}=\int_{D_\eps^+}
(\Delta \eta v+2\nabla \eta\cdot \nabla v) v_{\bar k}^\eps\,dx,\quad
\text{for every }v\in \Di{D_\eps^+}.
\]
From Corollary  \ref{stime_integrali_vjeps} it follows that 
\begin{equation}\label{eq:68}
\|w_\eps\|_{ (\Di{D_\eps^+})^\star}=o(\eps^N),\quad\text{as
  }\eps\to 0^+.
\end{equation}
Combining \eqref{eq:65}, \eqref{eq:66}, \eqref{eq:67}, and
\eqref{eq:68}, we obtain \eqref{eq:63}, thus completing the proof. 
\end{pfn}

\end{document}